\documentclass[english,reqno]{amsart}

\usepackage{latexsym}
\usepackage{amssymb,amsbsy,amsmath,amsfonts,amssymb,amscd,amsrefs}
\usepackage{dsfont}
\usepackage{mathrsfs}
\usepackage[nolabel]{showlabels} 
\usepackage{enumitem}

\usepackage{float}
\usepackage[caption = false]{subfig}
\usepackage[final]{graphicx}

\usepackage{subfig}
\usepackage{xcolor}

\setlength{\oddsidemargin}{0mm} \setlength{\evensidemargin}{0mm}
\setlength{\topmargin}{5mm} \setlength{\textheight}{20cm}
\setlength{\textwidth}{17cm}

\parindent 10pt

\usepackage{cases}
\theoremstyle{plain}

\newtheorem{theorem}{Theorem}
 
\newtheorem{lemma}{Lemma}
\newtheorem{proposition}{Proposition}
\newtheorem{corollary}{Corollary}

\theoremstyle{definition} 
\newtheorem{definition}{Definition}
\newtheorem{remark}{Remark}

\newcommand{\vep}{\varepsilon}
\newcommand{\rd}{\mathrm{d}}

\newcommand\defeq{\mathrel{\stackrel{\makebox[0pt]{\mbox{\normalfont\tiny def}}}{=}}}
\newcommand{\dsim}{\stackrel{d}{\sim}}

\newcommand{\Amat}{\mathsf{A}}
\newcommand{\Bmat}{\mathsf{B}}

\newcommand{\amat}{\mathsf{a}}
\newcommand{\xmat}{\mathsf{x}}
\newcommand{\bmat}{\mathsf{b}}
\newcommand{\Smat}{\mathsf{S}}
\newcommand{\Rmat}{\mathsf{R}}

\newcommand{\Fmat}{\mathsf{F}}
\newcommand{\Gmat}{\mathsf{G}}
\newcommand{\Xmat}{\mathsf{X}}
\newcommand{\fmat}{\mathsf{f}}
\newcommand{\gmat}{\mathsf{g}}
\newcommand{\pmat}{\mathsf{p}}
\newcommand{\qmat}{\mathsf{q}}
\newcommand{\Pmat}{\mathsf{P}}
\newcommand{\Qmat}{\mathsf{Q}}
\newcommand{\ymat}{\mathsf{y}}

\newcommand{\Asf}{\mathsf{A}}
\newcommand{\Bsf}{\mathsf{B}}
\newcommand{\xsf}{\mathsf{x}}
\newcommand{\bsf}{\mathsf{b}}
\newcommand{\Ssf}{\mathsf{S}}

\newcommand{\Idsf}{\mathsf{Id}}
\newcommand{\fsf}{\mathsf{f}}
\newcommand{\gsf}{\mathsf{g}}
\newcommand{\psf}{\mathsf{p}}
\newcommand{\qsf}{\mathsf{q}}
\newcommand{\Psf}{\mathsf{P}}
\newcommand{\Qsf}{\mathsf{Q}}

\newcommand{\ysf}{\mathsf{y}}

\newcommand{\Xsf}{\mathsf{X}}
\newcommand{\Fsf}{\mathsf{F}}
\newcommand{\Gsf}{\mathsf{G}}

\newcommand{\Usf}{\mathsf{U}}
\newcommand{\Vsf}{\mathsf{V}}

\newcommand{\usf}{\mathsf{u}}
\newcommand{\vsf}{\mathsf{v}}
\newcommand{\ssf}{\mathsf{s}}
\newcommand{\tsf}{\mathsf{t}}
\newcommand{\wsf}{\mathsf{w}}

\newcommand{\Rbb}{\mathbb{R}}
\newcommand{\Ebb}{\mathbb{E}}

\newcommand{\Scal}{\mathcal{S}}
\newcommand{\Ncal}{\mathcal{N}}
\newcommand{\Gcal}{\mathcal{G}}

\newcommand{\Range}[1]{\mbox{\rm Range}(#1)}
\newcommand{\Trace}[1]{\mbox{\rm Tr}(#1)}

\graphicspath{{../../../code/}}

\newcommand{\blue}[1]{\textcolor{black}{#1}}

\begin{document}
	
\title{Structured random sketching for PDE inverse problems}

\author{Ke Chen
  \and
  Qin Li
  \and
  Kit Newton
  \and
  Stephen J. Wright
}


\begin{abstract}
  For an overdetermined system $\mathsf{A}\mathsf{x} \approx \mathsf{b}$ with $\mathsf{A}$
  and $\mathsf{b}$ given, the least-square (LS) formulation $\min_x \,
  \|\mathsf{A}\mathsf{x}-\mathsf{b}\|_2$ is often used to find an acceptable solution
  $\mathsf{x}$. The cost of solving this problem depends on the dimensions
  of $\mathsf{A}$, which are large in many practical instances. This cost
  can be reduced by the use of random sketching, in which we choose a
  matrix $\mathsf{S}$ with many fewer rows than $\mathsf{A}$ and $\mathsf{b}$, and
  solve the sketched LS problem $\min_x \, \|\mathsf{S}(\mathsf{A}
  \mathsf{x}-\mathsf{b})\|_2$ to obtain an approximate solution to the original
  LS problem. Significant theoretical and practical progress has been
  made in the last decade in designing the appropriate structure and
  distribution for the sketching matrix $\mathsf{S}$. When $\mathsf{A}$ and
  $\mathsf{b}$ arise from discretizations of a PDE-based inverse problem,
  tensor structure is often present in $\mathsf{A}$ and $\mathsf{b}$.  For
  reasons of practical efficiency, $\mathsf{S}$ should be designed to have
  a structure consistent with that of $\mathsf{A}$. Can we claim similar
  approximation properties for the solution of the sketched LS problem
  with structured $\mathsf{S}$ as for fully-random $\mathsf{S}$?  We give
  estimates that relate the quality of the solution of the sketched LS
  problem to the size of the structured sketching matrices, for two
  different structures. Our results are among the first known for
  random sketching matrices whose structure is suitable for use in PDE
  inverse problems.
\end{abstract}

\maketitle


\section{Introduction} \label{sec:intro}
In overdetermined linear systems (in which the number of linear
conditions exceeds the number of unknowns), the least-squares (LS)
solution is often used as an approximation to the true solution when
the data contains noise. Given the system $\Amat\xmat = \bmat$ where
$\Amat\in\Rbb^{n\times p}$ with $n\gg p$, the least-squares solution
$\xmat^\ast$ is obtained by minimizing the $l^2$-norm discrepancy
between the $\Amat \xmat$ and $\bmat$, that is,
\begin{equation}\label{eqn:LS}
  \min_\xmat \, \|\Amat\xmat-\bmat\|_2\,,\implies \xmat^\ast =  \Amat^\dagger\bmat,\;\; \mbox{where $\Amat^\dagger \defeq (\Amat^\top\Amat)^{-1}\Amat^\top$.}
\end{equation}
The matrix $\Amat^\dagger$ is often called the {\em pseudoinverse}
(more specifically the {\em Moore-Penrose pseudoinverse}) of $\Amat$.
	
The LS method is ubiquitous in statistics and engineering, but large
problems can be expensive to solve.  Aside from the cost of preparing
$\Amat$, the cost of solving for $\xmat^\ast$ is $\mathcal{O}(np^2)$
flops for general (dense) $\Amat$ is prohibitive in large dimensions.
	
We can replace the LS problem with a smaller approximate LS problem by
using {\em sketching}. Each row of the sketched system is a linear
combination of the rows of $\Amat$, together with the same linear
combination of the elements of $\bmat$. This scheme amounts to
defining a sketching matrix $\Smat\in\Rbb^{r\times n}$ with $r\ll n$,
and replacing the original LS problem by
\begin{equation}\label{eqn:LS_ast}
  \min_\xmat \, \|\Smat\Amat\xmat-\Smat\bmat\|_2\,,\implies \xmat_s^\ast =(\Smat\Amat)^\dagger \Smat\bmat\,.
\end{equation}
For appropriate choices of $\Smat$, the solutions of \eqref{eqn:LS}
and \eqref{eqn:LS_ast} are related in the sense that
\begin{equation}\label{eqn:error}
\mbox{ $\| \bmat-\Amat\xmat^\ast\|$ is not too much \blue{smaller} than $\|\bmat-\Amat\xmat^\ast_s\|$.}
\end{equation}
Usually one does not design $\Smat$ directly, but rather draws its entries from
a certain distribution. In such a setup, we can ask
whether~\eqref{eqn:error} holds with high probability. 
	

\blue{ The literature on random sketching is rich. During the past
  decade, many theoretical and numerical studies have
  appeared~\cite{sarlos2006improved,Drineas2011,rokhlin2008fast,woodruff2014sketching,clarkson2017low,meng2013low,nelson2013osnap,diao18a,sun2018tensor,avron2014subspace,Pagh2013,jin2019faster,chi2018randomized,ma2015statistical,liu2018simultaneous},
  with applications in such subjects as stochastic optimization
  \cite{liu2018simultaneous}, $l^p$ regression
  \cite{woodruff2013subspace,Woodruff16,clarkson2017low,meng2013low,Raskutti_Mahoney,rokhlin2008fast,SohlerWoodruff11},
  and tensor
  decomposition~\cite{BIAGIONI2015116,ChengPengLiu16,Battaglino18,Reynolds16,malik2018low}. The
  technical support for these results comes mostly from the Johnson-Lindenstrauss
  lemma~\cite{JohL84}, random matrix
  theory~\cite{vershynin2018high,vershynin2010introduction}, and
  compressed sensing \cite{eldar2012}.} Two important
perspectives have been utilized. One approach starts with the least
squares problem and proposes two conditions for the
random matrix such that an accurate solution can be attained with high
confidence. It is then shown that  certain choices of random matrices
indeed satisfy these two conditions. Instances of this approach can be found in~\cite{sarlos2006improved,Drineas2011,rokhlin2008fast} and
\blue{the reviews~\cite{Mahoney_review,martinsson2020}.} The second
perspective focuses on the structure of the space spanned by
$\Amat$.  It is argued that this space can be approximated by a
finite number of vectors (the so-called $\gamma$-net), which can further
be ``embedded" using random matrices, with high
accuracy; see~\cite{SohlerWoodruff11,Woodruff16,woodruff2013subspace}
and a review \cite{woodruff2014sketching}. We use this second
perspective in this paper.


There are many variations of the original sketching problem. With some
statistical assumptions on the perturbation in the right hand side,
results could be further enhanced~\cite{Raskutti_Mahoney}, and the
sketching problem is also investigated when other constraints (such as
$l_1$ constraints) are present; see for example
\cite{wainwright}. \blue{In~\cite{chi2018randomized,Pilanci16,Drineas2011}
  the authors also directly quantify $\|\xsf_s^\ast - \xsf^\ast\|$
  instead of the residual, as in~\eqref{eqn:error}.}

In most previous studies, the design of $\Smat$ varies according to
the priorities of the application. For good accuracy with small $r$,
random projections with sub-Gaussian variables are typically
used. When the priority is to reduce the cost of computing the product
$\Smat\Amat$, either sparse or Hadamard type matrices have been
proposed, leading to ``random-sampling'' or FFT-type reduction in cost
of the matrix-matrix multiplication. To cure ``bias'' in the selection
process, leverage scores have been introduced; these trace their
origin back to classical methods in experimental design.
	
In this paper, with practical inverse problems in mind, we consider
the case in which $\Amat$ and $\bsf$ have certain tensor-type
structures. For the sketched system to be formed and solved
efficiently, the random sketching matrix $\Smat$ must have a
corresponding tensor structure. For these tensor-structured sketching
matrices $\Smat$, we ask: What are the requirements on $r$ to achieve
a certain accuracy in the solution $\xmat^\ast_s$ of the sketched
system?

We consider $\Amat$ with the following structure:
\blue{
\begin{equation}\label{eqn:Astructure}
\Amat=\Fmat \ast \Gmat \,,
\end{equation}
where $\ast$ denotes the (column-wise)} \textit{Khatri-Rao product} of the matrices $\Fsf$ and $\Gsf$.
Assuming
$i_1\in\mathcal{I}_1$ and $i_2\in\mathcal{I}_2$, with cardinalities
$n_1=|\mathcal{I}_1|$ and $n_2=|\mathcal{I}_2|$, respectively, the
dimensions of these matrices are
\begin{equation} \label{eqn:FG}
  \Fmat \in\Rbb^{n_1\times p}\,,\quad\Gmat \in\Rbb^{n_2\times
    p}\,,\quad\Amat\in\Rbb^{n\times p}\,,
\end{equation}
where $n=|\mathcal{I}_1\otimes\mathcal{I}_2|= n_1n_2$.
	
By defining $\fmat_j=\Fmat_{:,j} \in \mathbb R^{n_1}$ and $\gmat_j=\Gmat_{:,j}\in \mathbb R^{n_2}$, we can define $\Amat$
alternatively as
\begin{equation} \label{eq:Afg}
  \amat_j \defeq \Amat_{:,j}=\fmat_j\otimes\gmat_j\,,
\end{equation}
where $\amat_j \in \Rbb^n$ denotes the $j$th column of $\Amat$, for
$j=1,2,\dotsc,p$. For vector $\bsf$, we assume that it admits the same tensor structure, that is,
\begin{equation}\label{eqn:bstructure}
	\bsf = \fsf_\bsf \otimes \gsf_\bsf \,, \quad \mbox{for some fixed $\fsf_\bsf \in \Rbb^{n_1}$ and $\gsf_\bsf \in \Rbb^{n_2}$\,.}
\end{equation}

This type of structure comes from the fact that to formulate inverse
problems, one typically needs to prepare both the \emph{forward} and
\emph{adjoint} solutions. Denoting by $\sigma(x)$  the unknown function
to be reconstructed in the inverse PDE problem,  a very typical
formulation is written as a Fredholm integral of the first type:
\begin{equation} \label{eq:fred}
  \int f_{i_1}(x)g_{i_2}(x)\sigma(x) d{x} = \text{data}_{i_1,i_2}\,,
\end{equation}
where $f_{i_1}$ and $g_{i_2}$ solve the forward and adjoint equations
respectively, equipped with boundary/initial conditions indexed by
$i_1$ and $i_2$. Each term on the right-hand side of \eqref{eq:fred} is typically data measured at $i_2$ with input source index $i_1$. To reconstruct $\sigma$, one loops over the entire
list of conditions for $f_{i_1}$ ($i_1 \in \mathcal{I}_1$) and
$g_{i_2}$ ($i_2 \in \mathcal{I}_2$).  The LS formulation
$\min\|\Amat\xmat-\bmat\|_2$ is the discrete version of the Fredholm
integral \eqref{eq:fred}.
	

\blue{This structure imposes requirements on the sketching matrix
$\Smat$. Since $\mathcal{I}_1$ and $\mathcal{I}_2$ contain conditions
for different sets of equations, sketching needs to be performed
within $\mathcal{I}_1$ and $\mathcal{I}_2$ separately. This condition
is reflected by choosing the sketching matrix $\Smat$ to be the \textit{row-wise Khatri-Rao product of $\Pmat$ and $\Qmat$}, that is,}
\[
\Smat_{i,:} = \pmat_i^\top\otimes\qmat_i^\top\,,
\]
where $\pmat_i\in\Rbb^{n_1}$ and $\qmat_i\in\Rbb^{n_2}\,, i = 1,\ldots,p$. The
product $\Smat \Amat$ then has the special form:
\begin{equation} \label{eq:SA}
  (\Smat\Amat)_{i,:} = (\pmat^\top_{i}\Fmat )\circ
  (\qmat^\top_{i}\Gmat)\,,\quad\text{or
    equivalently}\quad(\Smat\Amat)_{i,j} =
  (\pmat_i^\top\fmat_j)(\qmat_i^\top\gmat_j).
\end{equation}
Thus, to formulate the $i$ row in the reduced (sketched) system, we
perform a linear combination of parameters in $\mathcal{I}_1$
according to $\pmat_{i}$ to feed in the forward solver, and a linear
combination of parameters in $\mathcal{I}_2$ according to $\qmat_{i}$
to feed in the adjoint solver, then assemble the results in the
Fredholm integral~\eqref{eq:fred}.
	
With the structural requirements for $\Smat$ in mind, we consider the
following two approaches for choosing $\Smat$.
\begin{itemize}
\item[Case 1:] Generate two random matrices $\Pmat$ and $\Qmat$, of
  size $r_1\times n_1$ and $r_2\times n_2$, respectively, and define
  $\Smat$ to be their tensor product:
  \begin{equation}\label{eqn:case1}
    \Smat = \Pmat\otimes\Qmat\in\Rbb^{r_1 r_2\times n_1 n_2}\,.
  \end{equation}
\item[Case 2:] Generate two sets of $r$ random vectors
  $\{\pmat_i\,,i=1,2,\dotsc,,r\}$ and $\{\qmat_i\,,i=1,2,\dotsc,r\}$, with
  $\pmat_i \in \Rbb^{n_1}$ and $\qmat_i \in \Rbb^{n_2}$ for each $i$,
  and define row $i$ of $\Smat$ to be the tensor product of the
  vectors $\pmat_i$ and $\qmat_i$:
  \begin{equation}\label{eqn:case2}
    \Smat = 
    \frac{1}{\sqrt{r}}
    \begin{bmatrix}
      \pmat^\top_1 \otimes \qmat_1^\top \\
      \vdots \\
      \pmat^\top_r \otimes \qmat_r^\top \\
    \end{bmatrix} \in \Rbb^{r \times n_1 n_2 }
    \,.
  \end{equation}
\end{itemize}
Case 2 gives greater randomness, in a sense, because the rows of
$\Pmat$ and $\Qmat$ are not ``re-used'' as in the first option.

We are not interested in designing sketching matrices of Hadamard
type. In practice, $\Amat$ is often semi-infinite: $\Fmat$ and $\Gmat$
contain all possible forward and adjoint solutions, a set of infinite
cardinality that cannot be prepared in advance. In practice, one can
only obtain the ``realizations'' $\pmat^\top\Fmat$ or
$\qmat^\top\Gmat$ obtained by solving the forward and adjoint
equations with the parameters contained in $\pmat$ and
$\qmat$. Because we use this technique to find $\Smat\Amat$, rather
than computing the matrix-matrix product explicitly,
there is no advantage to defining $\Smat$ in terms of
Hadamard type random matrices.

\blue{There have been discussions in the sketching literature on
  problems that share our setups, including sketching of matrices
  $\Asf$ with Khatri-Rao product structure. The paper
  \cite{BIAGIONI2015116} presents a tensor interpolative decomposition
  problem which discusses Khatri-Rao product form, but there is not a
  focus on sketching. The paper \cite{sun2018tensor} proposes a
  so-called tensor random projection (TRP), similar to our Case 2
  presented below. However, they mainly obtain sketching of one
  arbitrarily given vector in the space, while we need to sketch the
  entire space. Directly employing their argument in our setting would
  lead to $r=\mathcal{O}(p^8/\varepsilon^2)$, whereas our argument
  suggests that having $r=\mathcal{O}(p^6/\varepsilon)$ is
  sufficient. This point will be discussed further in
  Theorem~\ref{thm:maincase2}.}

\blue{In~\cite{malik2019,jin2019faster} the authors considered the
  fast Johnson-Lindenstrauss Transform (JLT) random matrices and
  showed that the Kronecker product of fast JLT is also a JLT. This
  structure allows embedding of an arbitrarily given vector. For
  embedding vectors that have tensor structure,
  \cite{diao18a,NIPS2019Woodruff} developed \textit{TensorSketch} or
  \textit{CountSketch}, and discussed the efficiency of these
  algorithms in terms of the number of nonzero entries in $\Asf$. All
  these results are highly related to ours, but they all have
  dependences on the ambient space dimension $n$, making them poorly
  suited to our setting, where we consider the possibility of
  $n\to\infty$.}

The rest of the paper is organized as follows. In
Section~\ref{sec:inverse}, we give two examples from PDE-based inverse
problem that give rise to a linear system with tensor structure.
Section~\ref{sec:mainResult} presents classical results on sketching
for general linear regression, and states our main results on
sketching of inverse problem associated with a tensor
structure. Sections~\ref{sec:case1} and \ref{sec:case2} study the two
different sketching strategies outline above. Computational testing
described in Section~\ref{sec:numerical} validates our results.


%
	
We denote the range space (column space) of a matrix $\Xmat$ by
$\Range{\Xmat}$.

\section{Overdetermined systems with tensor structure arising from PDE inverse problems}
\label{sec:inverse}

\blue{ Most PDE-based inverse problems, upon linearization, reduce to
  a tensor structured Fredholm integral~\eqref{eq:fred}, which can be
  discretized to formulate a sketching problem.}

\blue{One particularly famous example is Electrical Impedance
  Tomography (EIT), in which we apply voltage strength and measure
  current density at the boundary of some bio-tissues to infer for
  conductivity inside the body. The underlying PDE is a standard
  second order elliptic equation
\begin{equation}\label{eqn:nonlinearEIT}
\begin{aligned}
\nabla_x \cdot\left( \overline{\sigma}(x) \nabla_x \overline{\rho}(x) \right) &= 0\,, \quad &x\in \Omega\,, \\
\rho(x) &= \phi(x)\,, \quad &x\in \partial \Omega\,,
\end{aligned}
\end{equation}
where $\phi(x)$ is the voltage strength applied on the surface of some
bio-tissue, while $\overline{\rho}(x)$, the solution to the PDE, is
the voltage generated throughout the body. The unknown conductivity
$\overline{\sigma}(x)$ will be inferred. The measurements are taken on
the boundary too. In particular, one measures the current density on
the surface of $\Omega$ tested on a testing function $\psi$, as follows:
\begin{equation}\label{eqn:data_phipsi}
	\overline{\text{data}}_{\phi,\psi} = \int_{\partial \Omega } \overline{\sigma}(x) \frac{\partial \overline{\rho}(x)}{\partial n} \psi(x) \rd x\,.
\end{equation}
Here, $\frac{\partial }{\partial n}$ is the normal derivative, with
$n$ being the normal direction pointing out of domain $\Omega$. The
data has two subscripts: $\phi(x)$ is the voltage applied to the
surface and $\psi(x)$ is a testing function that encodes the way
measurements are taken. When the detector is extremely precise, one
can set $\psi(x) = \delta(x-x_0)$ for some $x_0\in \partial \Omega$,
making $\text{data}_{\phi,\psi}$ the current at point $x_0$ when
voltage $\phi$ is applied. With infinite pairs of $\phi$ and $\psi$ in
the experimental setup, EIT seeks to reconstruct
$\overline{\sigma}(x)$. EIT further reduces to the famous Calder\'on
problem when the span of $\phi$ and $\psi$ covers the entire
$H^{1/2}$.  In practice, however, one typically has a rough estimate
of the media $\overline{\sigma}(x)$, termed the background media
$\sigma^\ast(x)$. (For example, most human lungs have the same
structure.) In such situations, one can linearize and reconstruct the
perturbation $\sigma(x)\defeq \overline{\sigma}(x) - \sigma^\ast(x)
\ll 1$. Specifically, suppose that $\rho_1$ solves the following
background forward equation:
\begin{equation}\label{eqn:background}
\begin{aligned}
\nabla_x \cdot\left( \sigma^\ast(x) \nabla_x \rho_1(x) \right) &= 0\,, \quad &x\in \Omega \\
\rho_1(x) &= \phi(x)\,, \quad &x\in \partial \Omega
\end{aligned}
\end{equation}
with the same boundary condition $\phi$ and the given known background
media $\sigma^\ast$. Since both these quantities are known,
$\rho_1(x)$ can be solved ahead of time for any $\phi$. We can also
define the adjoint equation:
\begin{equation}\label{eqn:adjoint}
\begin{cases}
\nabla_x\cdot(\sigma^\ast(x)\nabla_x \rho_2(x)) = 0\,,\;\; & x\in\Omega\\
\rho_2(x)= \psi(x)\,, \;\; & x\in \partial \Omega\,.
\end{cases}
\end{equation}
} \blue{To obtain the Fredholm integral, we take the difference of
  \eqref{eqn:background} and \eqref{eqn:nonlinearEIT} and drop higher
  order terms in $\sigma(x)$ to obtain
\begin{equation}\label{eqn:difference}
\begin{aligned}
\nabla_x\cdot(\sigma^\ast(x)\nabla_x \rho(x)) &= -\nabla_x \cdot(\sigma(x) \nabla_x \rho_1(x))\,,\;\; & x\in\Omega\\
\rho(x) &= 0\,,  \;\; & x\in \partial \Omega
\end{aligned}
\end{equation}
where $\rho(x)\defeq \overline{\rho}(x)-\rho_1(x)$.}  \blue{ With this
  equation multiplied with $\rho_2$ and the adjoint
  \eqref{eqn:adjoint} multiplied with $\rho$, we integrate over
  $\Omega$ and integrating by parts.
  The left hand sides
  cancel and the right hand side of~\eqref{eqn:difference} will be
  balancing the boundary terms:
\begin{equation}\label{eqn:inverse_eit2}
\begin{aligned}
\int \nabla_x \rho_1(x) \cdot\nabla_x \rho_2(x)\sigma(x)\rd{x} & = \int_{\partial\Omega} \sigma^\ast \frac{\partial \rho}{\partial n} \psi \rd x + \int_{\partial \Omega} \sigma \frac{\partial \rho_1}{\partial n} \psi \rd x \,.
\end{aligned}
\end{equation}
While the left hand side of this equation is Fredholm integral testing
on $\sigma$ (the conductivity to be reconstructed) with test function
$\nabla_x \rho_1(x) \cdot\nabla_x \rho_2(x)$, the right hand side is
the data that we obtain from measurement
$\text{data}_{\phi,\psi}$. Indeed, since $\overline{\rho}=\rho+\rho_1$
and $\overline{\sigma}=\sigma+\sigma^\ast$, with $\rho\ll 1$ and
$\sigma\ll 1$, 
the right hand side can be approximated by dropping the higher order term $\int_{\partial \Omega }\sigma\frac{\partial \rho}{\partial n} \psi \rd x$, as follows:
\begin{align*}
& \int_{\partial\Omega} \sigma^\ast \frac{\partial \rho}{\partial n} \psi \rd x + \int_{\partial \Omega} \sigma \frac{\partial \rho_1}{\partial n} \psi \rd x \\
=&\int_{\partial \Omega } \overline{\sigma}(x) \frac{\partial \overline{\rho}}{\partial n} \psi \rd x - \int_{\partial \Omega } \sigma^\ast(x) \frac{\partial \rho_1}{\partial n} \psi \rd x-\int_{\partial\Omega} \sigma\frac{\partial\rho}{\partial n}\rd x\\
\approx & \int_{\partial \Omega } \overline{\sigma}(x) \frac{\partial \overline{\rho}}{\partial n} \psi \rd x - \int_{\partial \Omega } \sigma^\ast(x) \frac{\partial \rho_1}{\partial n} \psi \rd x\,,
\end{align*}
This expression differs from
$\overline{\text{data}}_{\phi,\psi}$ defined
in~\eqref{eqn:data_phipsi} by $\int_{\partial\Omega }\sigma^\ast \frac{\partial \rho_1}{\partial n} \psi \rd x$, a
pre-computed term, and thus the entire term is known. We finally have
\begin{equation}\label{eqn:inverse_eit}
\int \nabla_x \rho_1(x) \cdot\nabla_x \rho_2(x)\sigma(x)\rd{x}  = \text{data}_{\phi,\psi}\,.
\end{equation}
} \blue{ We emphasize that the $\phi$ dependence comes in through
  $\rho_1$ while the $\psi$ dependence comes in through
  $\rho_2$. These functions represent applied voltage source and
  measuring setup, respectively. If one can provide point source and
  point measurement, $\phi$ and $\psi$ can be as sharp as Dirac-delta
  functions.}

\blue{By varying $\phi$ and $\psi$, one finds infinitely many pairs
  $\{\rho_{1}(\cdot;\phi)\,, \rho_2(\cdot;\psi)\}$, each pair
  providing one data point corresponding to one experiment
  setup. These experimental setup altogether give rise to an
  overdetermined Fredholm integral. More details can be found in
  \cite{Borcea02,Cheney99}.}

\blue{A similar problem arises in optical tomography
  \cite{Arridge1999}. Here we inject light into bio-tissue and take
  measurements of light intensity on the surface, to reconstruct the
  optical properties of the bio-tissue. The formulation is 
\begin{equation}\label{eqn:inverse_op}
  \int \rho_1(x,v)\rho_2(x,v)\sigma(x,v)\rd{x}\rd{v} = \text{data}_{\phi,\psi}\,,
\end{equation}
where $(x,v) \in \Omega \otimes \mathbb{S}$ (where $\Omega$ is the
spatial domain and $\mathbb{S}$ is the velocity domain), and $\rho_i$
are solutions to the forward background radiative transfer equation
and the adjoint equation:
\[
\begin{cases}
  v\cdot\nabla_x \rho_1(x,v)= \sigma^\ast(x,v)\mathcal{L}\rho_1(x,v)\,,\;\; & (x,v)\in\Omega\otimes\mathbb{S}\\
	\rho_1(x,v)= 0\,, \;\; & (x,v) \in {\Gamma_-} 
\end{cases},
\]
and 
\[
\begin{cases}
  -v\cdot\nabla_x \rho_2(x,v)= \sigma^\ast(x,v)\mathcal{L}\rho_2(x,v)\,,\;\; & (x,v)\in\Omega\otimes\mathbb{S}\\
  \rho_2(x,v) = \psi(x,v)\,, \;\; & (x,v) \in {\Gamma_+}
\end{cases}.
\]
} In these equations, $\mathcal{L}$ is a known integral linear
operator on $v$, and $\Gamma_-$ and $\Gamma_+$ are the set collecting incoming and
outgoing boundary coordinates, namely $\Gamma_\pm = \{(x,v): x\in\partial\Omega\,, \pm v\cdot n(x)>0\}$ with $n(x)$ being an outer-normal direction at $x\in\partial\Omega$.
By varying the boundary conditions $\phi$ and $\psi$, one can find
infinitely many solution pairs of
$\{\rho_1(\cdot,\phi),\rho_2(\cdot,\psi)\}$, and collect the
corresponding data in \eqref{eqn:inverse_op}. The inverse Fredholm
integral~\eqref{eqn:inverse_op} can then be solved for $\sigma$.  We
refer to \cite{Chen_2018,Arridge1999} for details of the linearization
procedure.

When $\sigma$ is discretized on $p$ grid points, the reconstruction
problem has the semi-infinite form $\Amat\xmat \approx\bmat$, where
$\xmat \in \Rbb^p$ is the discrete version of $\sigma$ and $\Amat$ and
$\bmat$ have infinitely many rows, corresponding to the infinitely
many instances of $\rho_1$ and $\rho_2$. A fully discrete version can
be obtained by considering $n_1$ values of $\rho_1$ and $n_2$ values
of $\rho_2$, and setting $n=n_1 n_2$ to obtain a problem of the form
\eqref{eqn:LS}.  In the remainder of the paper, we study the sketched
form of this system \eqref{eqn:LS_ast}, for various choices of the
sketching matrix $\Smat$.
	

\section{Sketching with tensor structures}\label{sec:mainResult}

We preface our results with a definition of $(\vep,\delta)$-$l^2$ embedding.
\begin{definition}[$(\vep,\delta)$-$l^2$ embedding] \label{def:l2emb}
  Given matrix $\bar\Amat$ and $\vep>0$, let $\Smat$ be a random
  matrix drawn from a matrix distribution $(\Omega ,{\mathcal
    {F}},\Pi)$. If with probability at least $1-\delta$, we have
  \begin{equation}\label{eqn:l2emb}
    \left| \|\Smat \ymat\|^2-\| \ymat \|^2 \right| \leq \vep \| \ymat \|^2, \quad \mbox{for all }\ymat
    \in \Range{\bar\Amat}\,,
  \end{equation}
  then we say that $\Smat$ is an $(\vep,\delta)$-$l^2$ embedding of
  $\bar\Amat$.
\end{definition}
	
	
Note that \eqref{eqn:l2emb} depends only on the space
$\Range{\bar\Amat}$ rather than the matrix itself, so we sometimes say
instead that the random matrix $\Ssf$ is an $(\vep,\delta)$-$l^2$
embedding of the linear vector space $\Range{\bar\Amat}$. (We use the
two terms interchangeably in discussions below.)

	
	
	
The $(\vep,\delta)$-$l^2$ embedding property is essentially the only
property needed to bound the error resulting from sketching. It can be
shown that if $\Smat$ is an $(\vep,\delta)$-$l^2$ embedding for the
augmented matrix $\bar\Amat \defeq [\Amat,\bsf]$, then the two
least-squares problems \eqref{eqn:LS} and \eqref{eqn:LS_ast} are
similar in the sense of~\eqref{eqn:error}, as the following result
suggests.

\begin{theorem}\label{thm:obliv_preserv}
  For $\vep,\delta \in (0,1/2)$, suppose that $\Smat$ is an
  $(\vep,\delta)$-$l^2$ embedding of the augmented matrix $\bar \Amat
  \defeq [\Amat,\bmat] \in \Rbb^{n \times (p+1)}$.  Then with
  probability at least $1-\delta$, we have
  \begin{equation*}
    \|\Amat\xmat^\ast_s-\bmat\|^2\leq (1+4\vep)\|\Amat\xmat^\ast-\bmat\|^2\,,
  \end{equation*}
  where $\xmat^\ast$ and $\xmat^\ast_s$ are defined in~\eqref{eqn:LS}
  and~\eqref{eqn:LS_ast}, respectively.
\end{theorem}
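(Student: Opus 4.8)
The plan is to exploit two facts at once: the optimality of $\xmat^\ast_s$ for the sketched problem~\eqref{eqn:LS_ast}, and the observation that \emph{both} residual vectors $\Amat\xmat^\ast-\bmat$ and $\Amat\xmat^\ast_s-\bmat$ lie in $\Range{\bar\Amat}$. The latter is precisely why the embedding is required for the augmented matrix $\bar\Amat=[\Amat,\bmat]$ rather than for $\Amat$ alone: since $\Amat\xmat-\bmat$ is the image under $\bar\Amat$ of the vector $(\xmat^\top,-1)^\top \in \Rbb^{p+1}$, every residual of this form is a column-space element of $\bar\Amat$, so the $l^2$-embedding guarantee of Definition~\ref{def:l2emb} applies to it.

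Throughout I condition on the event of probability at least $1-\delta$ on which the embedding inequality~\eqref{eqn:l2emb} holds for all $\ymat\in\Range{\bar\Amat}$; the entire remaining argument is then deterministic. First I would apply the lower embedding bound to the sketched residual,
\begin{equation*}
(1-\vep)\,\|\Amat\xmat^\ast_s-\bmat\|^2 \leq \|\Smat(\Amat\xmat^\ast_s-\bmat)\|^2.
\end{equation*}
Next, because $\xmat^\ast_s$ minimizes $\|\Smat(\Amat\xmat-\bmat)\|$, substituting the feasible but suboptimal point $\xmat^\ast$ gives $\|\Smat(\Amat\xmat^\ast_s-\bmat)\|^2 \leq \|\Smat(\Amat\xmat^\ast-\bmat)\|^2$. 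Finally I apply the upper embedding bound to the optimal residual, $\|\Smat(\Amat\xmat^\ast-\bmat)\|^2 \leq (1+\vep)\,\|\Amat\xmat^\ast-\bmat\|^2$. Chaining these three relations and dividing by $1-\vep>0$ yields
\begin{equation*}
\|\Amat\xmat^\ast_s-\bmat\|^2 \leq \frac{1+\vep}{1-\vep}\,\|\Amat\xmat^\ast-\bmat\|^2.
\end{equation*}

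It then remains only to verify the elementary scalar inequality $\frac{1+\vep}{1-\vep}\leq 1+4\vep$ for $\vep\in(0,1/2)$; cross-multiplying (valid since $1-\vep>0$) reduces this to $0\leq 2\vep(1-2\vep)$, which holds exactly on this range and pins down the role of the hypothesis $\vep<1/2$. I do not anticipate any genuine difficulty in the argument—it is fully deterministic once we condition on the embedding event, and all of the probabilistic content is absorbed into the assumption that $\Smat$ is an $(\vep,\delta)$-$l^2$ embedding of $\bar\Amat$. The only conceptual step worth flagging is the bookkeeping point that it is the \emph{augmented} range $\Range{[\Amat,\bmat]}$, not $\Range{\Amat}$, that must be embedded, since the argument requires controlling $\Smat$ simultaneously on both residual directions.
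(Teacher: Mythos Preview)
Your proposal is correct and follows essentially the same argument as the paper: chain the lower embedding bound on $\Amat\xmat^\ast_s-\bmat$, the optimality of $\xmat^\ast_s$ for the sketched problem, and the upper embedding bound on $\Amat\xmat^\ast-\bmat$, then use $\frac{1+\vep}{1-\vep}\le 1+4\vep$ for $\vep\in(0,1/2)$. If anything, your write-up is more explicit than the paper's about why the augmented range $\Range{[\Amat,\bmat]}$ is needed and about the final scalar inequality.
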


The proof of the theorem is rather standard. We simply use the definition of the $(\vep,\delta)$-$l^2$ embedding and the fact that:
  \[
  (1-\vep) \| \Amat \xmat_s^\ast - \bmat \|^2 \le \| \Smat (\Amat
  \xmat_s^\ast - \bmat ) \|^2 \leq\| \Smat (\Amat \xmat^\ast -
   \bmat) \|^2 \leq (1+\vep)  \|\Amat \xmat^\ast -\bmat \|^2\,.
  \] 
For $0\leq \vep \leq 1/2$, this leads to
  \[
  \|\Amat \xmat_s^\ast -\bmat \|^2 \leq \frac{1+\vep}{1-\vep} \|\Amat
  \xmat^\ast -\bmat \|^2 \leq (1+4\vep) \|\Amat \xmat^\ast -\bmat
  \|^2\,.
  \]

Given this result, we focus henceforth on whether the various sampling
strategies form an $(\vep,\delta)$-$l^2$ embedding of the augmented
matrix $\bar \Amat =[\Amat,\bmat]$.

	
Another theorem that is crucial to our analysis, proved
in~\cite{woodruff2014sketching}, states that Gaussian matrices are
$(\vep,\delta)$-$l^2$ embeddings if the number of rows is sufficiently
large. This result does not consider tensor structure of $\Amat$.
\begin{theorem}[Theorem 2.3 from~\cite{woodruff2014sketching}]\label{thm:sketching}
  Let $\Rmat \in \Rbb^{r\times n}$ be a Gaussian matrix, meaning that
  each entry $\Rmat_{ij}$ is drawn i.i.d. from a normal distribution
  $\mathcal N(0,1)$, and define $\Smat\in \Rbb^{r\times n}$ to be the
  scaled Gaussian matrix defined by
  \[
  \Smat = \frac{1}{\sqrt{r}} \Rmat\,.
  \]
  For any fixed matrix $\Amat \in \Rbb^{n\times p}$ and $\vep,\delta \in
  (0,1/2)$, this choice of $\Smat$ is an
  $(\vep,\delta)$-$l^2$ embedding of $\Amat$ provided that 
  \[
  r \geq \frac{C}{\vep^2}( |\log \delta| + p) \,,
  \]
  where $C>0$ is a constant independent of $\vep$,
  $\delta$, $n$, and $p$.
\end{theorem}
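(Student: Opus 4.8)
The plan is to reduce the uniform statement over the infinite set $\Range{\Amat}$ to a spectral-norm bound on a single small random matrix. Because the embedding inequality \eqref{eqn:l2emb} is homogeneous of degree two in $\ymat$, and (as already noted in the excerpt) depends only on $\Range{\Amat}$, it suffices to verify it for unit vectors $\ymat \in \Range{\Amat}$. Let $\Umat \in \Rbb^{n\times d}$ collect an orthonormal basis of $\Range{\Amat}$, where $d = \dim\Range{\Amat} \le p$; then every unit vector in the range is $\Umat\zmat$ with $\zmat \in \Rbb^d$, $\|\zmat\|=1$, and $\|\Umat\zmat\|=1$. The desired conclusion is therefore equivalent to $\bigl|\,\|\Smat\Umat\zmat\|^2 - 1\,\bigr| \le \vep$ for all unit $\zmat$, i.e.\ to the spectral-norm estimate $\|(\Smat\Umat)^\top(\Smat\Umat) - \Idmat_d\|_2 \le \vep$.

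First I would exploit the rotational invariance of the Gaussian law: since $\Umat$ has orthonormal columns, $\Rmat\Umat$ is again a matrix with i.i.d.\ $\mathcal N(0,1)$ entries, now of size $r\times d$. Writing $\Gmat \defeq \Rmat\Umat$, we have $\Smat\Umat = \frac1{\sqrt r}\Gmat$, so the problem collapses to showing that the extreme singular values of the normalized $r\times d$ Gaussian matrix $\frac1{\sqrt r}\Gmat$ lie in $[\sqrt{1-\vep},\sqrt{1+\vep}]$ with probability at least $1-\delta$. Crucially, the ambient dimension $n$ has disappeared, leaving only $d \le p$ and $r$.

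The core estimate I would obtain by the covering-number argument alluded to in the introduction. For a single fixed unit vector $\zmat$, the vector $\Gmat\zmat$ is standard Gaussian in $\Rbb^r$, so $\frac1r\|\Gmat\zmat\|^2$ is a normalized $\chi^2_r$ variable and concentrates: $\Pr\bigl(|\frac1r\|\Gmat\zmat\|^2 - 1| > t\bigr) \le 2e^{-c r t^2}$ for $t\in(0,1)$, by the standard sub-exponential (Bernstein / Laurent--Massart) tail. I would then fix a $\gamma$-net $\Ncal$ of the unit sphere $S^{d-1}$ with $|\Ncal| \le (1+2/\gamma)^d = e^{O(d)}$ (say $\gamma = 1/4$), apply this tail at level $t = \vep/2$, and take a union bound, so that the deviation is controlled simultaneously on every net point except with probability at most $2e^{O(d) - c r \vep^2/4}$. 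A routine net-to-sphere approximation argument upgrades uniform control on $\Ncal$ to uniform control on the full sphere, at the cost of replacing $\vep/2$ by $\vep$ and adjusting constants. Requiring the failure probability to be at most $\delta$ forces $e^{O(d) - c' r \vep^2} \le \delta$, i.e.\ $r \gtrsim \vep^{-2}(d + |\log\delta|)$, and since $d \le p$ this yields the stated bound $r \ge \frac{C}{\vep^2}(|\log\delta| + p)$.

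The main obstacle is precisely the passage from one vector to the entire subspace --- the uniformity in \eqref{eqn:l2emb}. For a single fixed $\ymat$ the claim is just Johnson--Lindenstrauss-style $\chi^2$ concentration; the genuine work is the covering plus union-bound step together with the net-to-sphere comparison, and it is exactly there that the row count acquires its dependence on the subspace dimension $p$ rather than on $n$. Ensuring the constant $C$ is uniform over $\vep,\delta \in (0,1/2)$ requires only that the $\chi^2$ tail and the net-cardinality bound hold with absolute constants, which they do, so beyond careful bookkeeping of constants no delicate optimization is needed.
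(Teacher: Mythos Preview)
Your proposal is correct and matches the approach the paper itself describes: the theorem is quoted from \cite{woodruff2014sketching} without a full proof, but the paper remarks that it ``is proved by constructing a $\gamma$-net for the unit sphere in $\Range{\Amat}$ and applying the Johnson-Lindenstrauss lemma,'' which is precisely your reduction via an orthonormal basis, $\chi^2$ concentration for single vectors, a covering/union bound, and the net-to-sphere upgrade.
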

The lower bound of $r$ is almost optimal \blue{for the sketched
  regression problem}: the bound is independent of the number of
equations $n$, and grows only linearly in the number of unknowns
$p$. That is, the numbers of equations and unknowns in the sketched
problem \eqref{eqn:LS_ast} are of the same order. The theorem is
proved by constructing a $\gamma$-net for the unit sphere in
$\Range{\Amat}$ and applying the Johnson-Lindenstrauss lemma.
	
Building on the concept of $(\vep,\delta)$-$l^2$ embedding and the
\blue{relationship} between $(\vep,\delta)$-$l^2$ embedding and
sketching (Theorem~\ref{thm:obliv_preserv}), we will study the lower
bound for $r$ (the number of rows needed in the sketching) when the
tensor structure of Case 1 or Case 2 is imposed. Our basic strategy is
to decompose the tensor structure into smaller components to which
Theorem~\ref{thm:sketching} can be applied.
	
We state the results below and present proofs in
Sections~\ref{sec:case1} and \ref{sec:case2} for the two different
cases.

Recall the notation that we defined in Section~\ref{sec:intro}. The
matrices $\Fmat$, $\Gmat$ are defined in~\eqref{eqn:FG} and $\Amat$ is
defined in~\eqref{eq:Afg}. Both $\Fmat$ and $\Gmat$ are assumed to
have full column rank $p$.
We need to design the sketching matrix $\Smat$ to
$(\vep,\delta)$-$l^2$ embed $\Range{\bar\Amat}$, the space spanned by
$\{\fsf_\bsf\otimes \gsf_\bsf\} \cup \{ \amat_j \defeq \fmat_j\otimes
\gmat_j\,, j =1,\ldots,p \}$.  In Theorem \ref{thm:maincase1} and
\ref{thm:maincase2}, we construct the $(\vep,\delta)$-$l^2$ embedding
matrix of the Kronecker product $\Fsf\otimes \Gsf$, which
automatically becomes a $(\vep,\delta)$-$l^2$ embedding of its column
submatrix $\Asf$. Moreover, we show in Corollaries \ref{cor:maincase1}
and \ref{cor:maincase2} that these results can be extended to
construct $(\vep,\delta)$-$l^2$ embeddings of the augmented matrix
$\bar\Asf$ by constructing $(\vep,\delta)$-$l^2$ embeddings of the
Kronecker product of the augmented matrices $\bar\Fsf\otimes \bar
\Gsf$, where
\begin{equation} \label{eq:FGbar}
  \bar\Fsf=[\Fsf,\fsf_\bsf]\,, \quad \bar\Gsf=[\Gsf,\gsf_\bsf]\,.
  \end{equation}
	
For Case 1, we have the following result.
\begin{theorem}\label{thm:maincase1}
  Consider $\Smat = \Psf \otimes \Qsf \in \Rbb^{r_1 r_2 \times n_1 n_2}$
  where $\Psf \in \Rbb^{r_1 \times n_1},\Qsf\in \Rbb^{r_2 \times n_2}$
  are independent scaled Gaussian matrices defined by
  \[
  \Pmat \defeq \frac{1}{\sqrt{r_1}} \Rmat \;\; \text{and } \;\; \Qmat
  \defeq \frac{1}{\sqrt{r_2}} \Rmat' \,,\quad
  \mbox{where $\Rmat_{ij},\Rmat_{ij}'$ are i.i.d. normal for all $i,j$}\,.
  \]
  For any given full rank matrices $\Fsf \in \Rbb^{n_1 \times p}$,
  $\Gsf\in \Rbb^{n_2\times p}$, and $\Asf\in \Rbb^{n \times p}$ as in
  \eqref{eqn:FG} and \eqref{eq:Afg}, and $\vep,\delta \in (0,1/2)$,
  the random matrix $\Ssf$ is an $(\vep,\delta)$-$l^2$ embedding of
  $\Fsf\otimes \Gsf$ and $\Amat$  provided that
  \begin{equation}\label{eqn:r_bound1}
    r_i \geq \frac{C}{\vep^2}(|\log \delta| + p)\,, \quad i = 1,2\,,
  \end{equation}
  where the constant $C>0$ is independent of $\vep$, $\delta$, $n_1$,
  $n_2$, and $p$.
\end{theorem}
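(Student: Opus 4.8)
The plan is to reduce the tensor embedding to two independent applications of Theorem~\ref{thm:sketching}, exploiting the fact that both the range space $\Range{\Fsf\otimes\Gsf}$ and the sketching matrix $\Smat=\Psf\otimes\Qsf$ factor through the Kronecker product. First I would recast the $(\vep,\delta)$-$l^2$ embedding condition in terms of singular values. Let $\Usf\in\Rbb^{n_1\times p}$ and $\Vsf\in\Rbb^{n_2\times p}$ be matrices with orthonormal columns spanning $\Range{\Fsf}$ and $\Range{\Gsf}$ respectively; then $\Usf\otimes\Vsf$ has orthonormal columns and spans $\Range{\Fsf\otimes\Gsf}$. Writing an arbitrary $\ymat\in\Range{\Fsf\otimes\Gsf}$ as $\ymat=(\Usf\otimes\Vsf)\zsf$ with $\|\ymat\|=\|\zsf\|$, the embedding inequality~\eqref{eqn:l2emb} becomes the statement that every squared singular value of $\Smat(\Usf\otimes\Vsf)$ lies in $[1-\vep,1+\vep]$.

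The key algebraic step is the mixed-product identity $\Smat(\Usf\otimes\Vsf)=(\Psf\otimes\Qsf)(\Usf\otimes\Vsf)=(\Psf\Usf)\otimes(\Qsf\Vsf)$, together with the fact that the singular values of a Kronecker product are the pairwise products of the singular values of the factors. Thus the squared singular values of $\Smat(\Usf\otimes\Vsf)$ are exactly the products $\sigma_k^2(\Psf\Usf)\,\sigma_l^2(\Qsf\Vsf)$. I would then invoke Theorem~\ref{thm:sketching} separately: since $\Psf$ is a scaled Gaussian with $r_1$ rows, taking $r_1\geq C\vep'^{-2}(|\log(\delta/2)|+p)$ makes it an $(\vep',\delta/2)$-$l^2$ embedding of $\Range{\Fsf}$, which by the recasting above is equivalent to $\sigma_k^2(\Psf\Usf)\in[1-\vep',1+\vep']$ for all $k$; likewise $\sigma_l^2(\Qsf\Vsf)\in[1-\vep',1+\vep']$ for $\Qsf$. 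A union bound over the two failure events (probabilities summing to $\delta$) yields both embeddings simultaneously with probability at least $1-\delta$.

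It then remains to choose $\vep'$ so that the products stay inside the target window. On the good event the squared singular values of $\Smat(\Usf\otimes\Vsf)$ lie in $[(1-\vep')^2,(1+\vep')^2]$, and taking $\vep'=\vep/3$ gives $(1-\vep')^2\geq 1-\vep$ and $(1+\vep')^2\leq 1+\vep$ for $\vep\in(0,1/2)$. Substituting $\vep'=\vep/3$ and $\delta/2$ into the two row bounds produces $r_i\geq C'\vep^{-2}(|\log\delta|+p)$ after absorbing the factors $9$ and $\log 2$ into a new constant $C'$, which is precisely~\eqref{eqn:r_bound1}. Finally, because $\Range{\Amat}\subseteq\Range{\Fsf\otimes\Gsf}$ (each column $\amat_j=\fsf_j\otimes\gsf_j$ lies in $\Range{\Fsf}\otimes\Range{\Gsf}$), an embedding of $\Fsf\otimes\Gsf$ is automatically one of $\Amat$, completing the argument.

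The genuinely substantive step is the passage from the two scalar embeddings to the product window: one must verify that controlling each factor's singular values to within $\vep'$ of $1$ controls the whole Kronecker product to within $\vep$, which rests on the Kronecker singular-value identity and the short estimate for $(1\pm\vep')^2$. Everything else—orthonormalizing the two ranges, applying the mixed-product property, and the union bound—is bookkeeping, so I expect the only care needed is in tracking the constants.
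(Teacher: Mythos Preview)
Your proof is correct and follows the same overall architecture as the paper: orthonormalize the ranges via $\Usf_\Fsf,\Usf_\Gsf$, use the mixed-product identity to write $\Smat(\Usf_\Fsf\otimes\Usf_\Gsf)=(\Psf\Usf_\Fsf)\otimes(\Qsf\Usf_\Gsf)$, apply Theorem~\ref{thm:sketching} to each factor, and combine. The difference lies in the combination step. The paper first proves a preparatory lemma that $\Idsf\otimes\Qsf$ inherits the $(\vep,\delta)$ property of $\Qsf$, then uses the factorization $\Psf\otimes\Qsf=(\Psf\otimes\Idsf)(\Idsf\otimes\Qsf)$ and a triangle-inequality argument to obtain an $(\vep(2+\vep),2\delta)$ embedding for the product. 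You instead recast the embedding as a two-sided singular-value bound on $\Smat(\Usf\otimes\Vsf)$ and invoke the identity $\sigma(\Amat\otimes\Bmat)=\{\sigma_k(\Amat)\sigma_l(\Bmat)\}$ directly, reducing the combination to the elementary check $(1\pm\vep/3)^2\subseteq[1-\vep,1+\vep]$. Your route is shorter and avoids the auxiliary lemma and corollary; the paper's route is slightly more self-contained in that it never appeals to the Kronecker singular-value formula. Both yield the same bound on $r_i$ up to constants, and both finish by the subspace inclusion $\Range{\Amat}\subseteq\Range{\Fsf\otimes\Gsf}$.
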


\begin{corollary}\label{cor:maincase1}
  Consider the matrices $\Ssf$, $\Fsf$, $\Gsf$, and $\Asf$ from
  Theorem \ref{thm:maincase1}, and assume that the vector $\bsf$ has
  the form \eqref{eqn:bstructure}. Then for given $\vep,\delta \in
  (0,1/2)$, the random matrix $\Ssf$ is an $(\vep,\delta)$-$l^2$
  embedding of the augmented matrix $\bar \Asf \defeq [\Asf,\bsf]$,
  provided that
  \begin{equation}\label{eqn:r_bound1cor}
    r_i \geq \frac{C}{\vep^2}(|\log \delta| + p + 1)\,, \quad i = 1,2\,,
  \end{equation}
  where the constant $C>0$ is independent of $\vep$, $\delta$, $n_1$,
  $n_2$, and $p$.
\end{corollary}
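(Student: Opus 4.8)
The plan is to derive the corollary directly from Theorem~\ref{thm:maincase1}, applied not to $\Fsf,\Gsf$ but to the \emph{augmented} factor matrices $\bar\Fsf=[\Fsf,\fsf_\bsf]$ and $\bar\Gsf=[\Gsf,\gsf_\bsf]$ of~\eqref{eq:FGbar}, each having $p+1$ columns. Only two ingredients are needed: (i) the target space $\Range{\bar\Asf}$ is contained in $\Range{\bar\Fsf\otimes\bar\Gsf}$, and (ii) the $(\vep,\delta)$-$l^2$ embedding property, being a statement quantified over \emph{all} vectors of a range space, is automatically inherited by every subspace of that space.

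First I would establish the inclusion $\Range{\bar\Asf}\subseteq\Range{\bar\Fsf\otimes\bar\Gsf}$. The columns of a Kronecker product $\bar\Fsf\otimes\bar\Gsf$ are precisely the pairwise tensor products $(\bar\Fsf)_{:,a}\otimes(\bar\Gsf)_{:,b}$ for $a,b\in\{1,\dots,p+1\}$. Choosing $a=b=j$ with $j\le p$ recovers $\amat_j=\fmat_j\otimes\gmat_j$, the $j$th column of $\Asf$, while choosing $a=b=p+1$ recovers $\bsf=\fsf_\bsf\otimes\gsf_\bsf$ via~\eqref{eqn:bstructure}. Hence every column of $\bar\Asf=[\Asf,\bsf]$ is itself a column of $\bar\Fsf\otimes\bar\Gsf$, which yields the claimed inclusion of column spaces.

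Next I would invoke Theorem~\ref{thm:maincase1} with the roles of $\Fsf,\Gsf$ played by $\bar\Fsf,\bar\Gsf$. Since these matrices have $p+1$ columns, the theorem (with $p$ replaced by $p+1$) guarantees that $\Ssf=\Psf\otimes\Qsf$ is an $(\vep,\delta)$-$l^2$ embedding of $\bar\Fsf\otimes\bar\Gsf$ as soon as $r_i\ge\frac{C}{\vep^2}(|\log\delta|+p+1)$ for $i=1,2$, which is exactly the bound~\eqref{eqn:r_bound1cor}. By Definition~\ref{def:l2emb}, on the associated high-probability event the inequality~\eqref{eqn:l2emb} holds for all $\ymat\in\Range{\bar\Fsf\otimes\bar\Gsf}$, hence a fortiori for all $\ymat\in\Range{\bar\Asf}$ by the inclusion just established. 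This is precisely the statement that $\Ssf$ is an $(\vep,\delta)$-$l^2$ embedding of $\bar\Asf$.

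The one point requiring care, and the only genuine obstacle, is the full-rank hypothesis of Theorem~\ref{thm:maincase1}: it asks that $\bar\Fsf$ and $\bar\Gsf$ have full column rank $p+1$, which fails exactly when $\fsf_\bsf\in\Range{\Fsf}$ or $\gsf_\bsf\in\Range{\Gsf}$. This is circumvented by passing to a maximal linearly independent subset of the columns, producing full-rank matrices $\tilde\Fsf,\tilde\Gsf$ with at most $p+1$ columns and the same respective column spaces. Since $\Range{\tilde\Fsf\otimes\tilde\Gsf}=\Range{\tilde\Fsf}\otimes\Range{\tilde\Gsf}=\Range{\bar\Fsf\otimes\bar\Gsf}$, applying the theorem to $\tilde\Fsf,\tilde\Gsf$ gives the identical conclusion, and because the required row count is monotone in the number of columns the hypothesis $r_i\ge\frac{C}{\vep^2}(|\log\delta|+p+1)$ remains sufficient. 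Beyond this bookkeeping the argument is immediate given Theorem~\ref{thm:maincase1}; the substance of the corollary is simply the observation that augmenting the two \emph{factors} inflates the dimension bound only additively, by a single unit, rather than multiplicatively.
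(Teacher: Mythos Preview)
Your proposal is correct and essentially identical to the paper's proof: both augment the factor matrices to $\bar\Fsf,\bar\Gsf$, observe that $\Range{\bar\Asf}\subseteq\Range{\bar\Fsf\otimes\bar\Gsf}$, and apply Theorem~\ref{thm:maincase1} with $p$ replaced by $p+1$. Your treatment of the rank-deficient case (passing to a maximal independent subset of columns) is a slightly cleaner variant of the paper's explicit case analysis, but the substance is the same.
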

\begin{proof}
  Define the augmented matrices $\bar\Fsf$ and $\bar\Gsf$ as in
  \eqref{eq:FGbar}.  We have that
  \[
  \Range{\bar\Fsf\otimes \bar\Gsf} = \text{Span}\{\Fsf \otimes \Gsf,
  \fsf_1\otimes \gsf_\bsf,\ldots,\fsf_p\otimes \gsf_\bsf, \fsf_\bsf
  \otimes \gsf_1\,\ldots,\fsf_\bsf \otimes \gsf_p, \bsf \} \,.
  \]
\blue{Supposing that $\bar \Fsf$ and $\bar\Gsf$ have full rank}, the
linear subspace Range$(\bar\Asf)$ is a subspace of
Range$(\bar\Fsf\otimes \bar\Gsf)$. By applying
Theorem~\ref{thm:maincase1} to the augmented matrices $\bar\Fsf$ and
$\bar\Gsf$ and using \eqref{eqn:r_bound1cor}, we have that $\Ssf$ is
an $(\vep,\delta)$-$l^2$ embedding of Range$(\bar\Fsf \otimes
\bar\Gsf)$ as well as its subspace Range$(\bar\Asf)$. \blue{Supposing
  that $\bar \Fsf$ is not of full rank but $\bar \Gsf$ is of full
  rank, the subspace Range$(\bar \Fsf\otimes\bar\Gsf)$ is a subspace
  of Range$(\Fsf\otimes \bar\Gsf)$, so similar results can be obtained by
  applying Theorem~\ref{thm:maincase1} to $\Fsf$ and $\bar\Gsf$. Other
  cases regarding the rank of $\bar \Fsf$ and $\bar\Gsf$ can be dealt
  with in the same way.}
\end{proof}

The result for Case 2 is as follows.
\begin{theorem}\label{thm:maincase2}
  Let $\psf_i\in \Rbb^{n_1}$, $\qsf_i\in\Rbb^{n_2}$, $i=1,2,\ldots,r$
  be independent random Gaussian vectors, and define the sketching
  matrix $\Ssf$ to have the form:
  \begin{equation}\label{eqn:formScase2}
    \Smat = \frac{1}{\sqrt{r}} 
    \begin{bmatrix}
      \pmat^\top_1 \otimes \qmat^\top_1 \\
      \vdots\\
      \pmat^\top_r \otimes \qmat^\top_r \\
    \end{bmatrix}
    \in \Rbb^{r\times n_1n_2}\,.
  \end{equation}
  Suppose that $p \ge 6$, and that $\Fsf\in\Rbb^{n_1\times p}, \Gsf\in
  \Rbb^{n_2\times p}$, and $\Asf\in \Rbb^{n\times p}$ are full-rank
  matrices defined as in \eqref{eqn:FG} and
  \eqref{eq:Afg}.  Let $\vep,\delta \in (0,1/2)$. Then the random matrix
  $\Smat$ is an $(\vep,\delta)$-$l^2$ embedding of $\Fsf\otimes \Gsf$
  and $\Amat$ provided that
  \begin{equation}\label{eqn:r_bound2}
    r \geq C \max \left\{ \frac{1}{ \vep} \left(\left| \log
    \delta \right| + p^2 \right)^3 \,, \frac{1}{\vep^{5/2}}
    \right\}\,,
  \end{equation}
  where $C>0$ is a constant independent of $\vep$, $\delta$, $n_1$,
  $n_2$, and $p$.
\end{theorem}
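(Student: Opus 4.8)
The plan is to verify the $(\vep,\delta)$-$l^2$ embedding condition of Definition~\ref{def:l2emb} directly on the $p^2$-dimensional space $\Range{\Fsf\otimes\Gsf}$, exploiting the tensor structure to collapse the ambient dimension $n_1n_2$ down to the core dimension $p$. Write any $\ysf\in\Range{\Fsf\otimes\Gsf}$ as $\ysf=\sum_{j,k}Z_{jk}\,(\fsf_j\otimes\gsf_k)$ for a coefficient matrix $\mathsf{Z}\in\Rbb^{p\times p}$. Using $(\psf_i^\top\otimes\qsf_i^\top)(\fsf_j\otimes\gsf_k)=(\psf_i^\top\fsf_j)(\qsf_i^\top\gsf_k)$, the $i$th coordinate of $\Smat\ysf$ becomes
\[
(\Smat\ysf)_i=\tfrac{1}{\sqrt r}\,\usf_i^\top\mathsf{Z}\,\vsf_i,\qquad \usf_i\defeq\Fsf^\top\psf_i,\;\; \vsf_i\defeq\Gsf^\top\qsf_i,
\]
so that $\usf_i\sim\mathcal N(0,\Fsf^\top\Fsf)$ and $\vsf_i\sim\mathcal N(0,\Gsf^\top\Gsf)$ are independent $p$-dimensional Gaussians. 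Consequently $\|\Smat\ysf\|^2=\tfrac1r\sum_{i=1}^r(\usf_i^\top\mathsf{Z}\,\vsf_i)^2$, and a short moment computation gives $\Ebb[(\usf_i^\top\mathsf{Z}\,\vsf_i)^2]=\Trace{\Fsf^\top\Fsf\,\mathsf{Z}\,\Gsf^\top\Gsf\,\mathsf{Z}^\top}=\|\ysf\|^2$. Thus each summand is an unbiased estimator of $\|\ysf\|^2$, and the whole problem reduces to the concentration of an average of $r$ i.i.d.\ squared bilinear Gaussian chaoses, uniformly over $\mathsf{Z}$. Since $\Range{\Asf}\subseteq\Range{\Fsf\otimes\Gsf}$, an embedding of the latter embeds $\Asf$ automatically.

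Next I would establish concentration for a single fixed $\ysf$ (fixed $\mathsf{Z}$). The key quantity $\usf^\top\mathsf{Z}\vsf$ is a decoupled order-two Gaussian chaos (decoupled because $\usf,\vsf$ are independent), so conditioning on $\vsf$ it is a centered Gaussian; equivalently, after whitening it equals $\tilde\usf^\top\big((\Fsf^\top\Fsf)^{1/2}\mathsf{Z}(\Gsf^\top\Gsf)^{1/2}\big)\tilde\vsf$ with standard Gaussian $\tilde\usf,\tilde\vsf$ and $\|(\Fsf^\top\Fsf)^{1/2}\mathsf{Z}(\Gsf^\top\Gsf)^{1/2}\|_F^2=\|\ysf\|^2$. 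Gaussian hypercontractivity then yields $\|\usf^\top\mathsf{Z}\vsf\|_{L^q}\le (q-1)\|\ysf\|$, so the square obeys $\|(\usf^\top\mathsf{Z}\vsf)^2\|_{L^q}\lesssim q^2\|\ysf\|^2$; in other words, the centered summand is sub-Weibull of order $1/2$. Feeding this into a Rosenthal/Bernstein moment inequality for sums of $r$ independent heavy-tailed variables produces a two-regime bound: a sub-gaussian regime $\exp(-c\,r\vep^2)$ for moderate deviations, and a heavy-tailed regime of order $\exp\!\big(-c\,(r\vep)^{1/2}\big)$ for large deviations (equivalently $\|\sum_i(\usf_i^\top\mathsf{Z}\vsf_i)^2\|_{L^q}\lesssim(\sqrt{qr}+q^3r^{1/q})\|\ysf\|^2$).

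I would then upgrade single-vector concentration to uniform control over $\Range{\Fsf\otimes\Gsf}$ by a covering argument. Take a $\gamma$-net $\mathcal N$ (with $\gamma=1/4$) of the unit sphere of this $p^2$-dimensional subspace, so that $|\mathcal N|\le 9^{p^2}$, and apply the single-vector tail at every net point with a union bound; this forces the per-point failure exponent to dominate $|\log\delta|+p^2\log 9$. To pass from the net to the whole sphere I would use the standard quadratic-form extension: writing $\mathsf{B}(\ysf)\defeq\|\Smat\ysf\|^2-\|\ysf\|^2$ for the symmetric form on the subspace and $\|\mathsf{B}\|\defeq\sup_{\|\ysf\|=1}|\mathsf{B}(\ysf)|$, one has $\|\mathsf{B}\|\le(1-2\gamma)^{-1}\max_{\ysf\in\mathcal N}|\mathsf{B}(\ysf)|$, so controlling the deviation on $\mathcal N$ controls it on the entire sphere up to a factor of $2$. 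This delivers the $(\vep,\delta)$-$l^2$ embedding of $\Range{\Fsf\otimes\Gsf}$, and hence of $\Asf$.

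The final step is to optimize the moment order $q$ and balance the two tail regimes against the net cardinality. Setting $q\asymp L$ with $L\defeq|\log\delta|+p^2$, the sub-gaussian term requires $r\gtrsim L/\vep^2$, while the heavy-tailed $q^3$ contribution---the price of the sub-Weibull$(1/2)$ behaviour of the squared chaos---requires $r\gtrsim L^3/\vep$; tracking the residual $r^{1/q}$ correction in the crossover between the two regimes produces the secondary term $\vep^{-5/2}$, yielding the stated bound $r\gtrsim\max\{L^3/\vep,\vep^{-5/2}\}$, i.e.\ $r=\mathcal O(p^6/\vep)$. I expect the heavy tail of the squared bilinear chaos to be the main obstacle: unlike the fully-Gaussian case of Theorem~\ref{thm:sketching}, where $\Smat\ysf$ has Gaussian coordinates and $\|\Smat\ysf\|^2$ is merely a $\chi^2$ (sub-exponential) variable with the near-optimal $r\sim(|\log\delta|+p)/\vep^2$ scaling, here each coordinate is a product-type chaos whose square is only sub-Weibull$(1/2)$. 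Controlling this heavier tail uniformly over the full $p^2$-dimensional subspace---rather than for one prescribed vector, as in the TRP analysis of \cite{sun2018tensor}---is precisely what forces the polynomial degree up to $p^6$ and demands the careful moment-order optimization above in place of a black-box sub-exponential Bernstein inequality.
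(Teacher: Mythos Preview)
Your high-level strategy matches the paper's: reduce $\|\Smat\ysf\|^2$ to an average of $r$ i.i.d.\ squared decoupled Gaussian bilinear forms $\zeta_i^2$ (the paper's Lemma~\ref{lem:SyDistribution}), establish single-vector concentration, then union-bound over a $\gamma$-net of the $p^2$-dimensional unit sphere in $\Range{\Fsf\otimes\Gsf}$. The technical routes differ in two places. For single-vector concentration, the paper uses an elementary MGF calculation to get an exponential tail for $\zeta$ (Lemma~\ref{lem:zetabound}), then a truncation-plus-Bernstein argument (Lemma~\ref{lem:zetaSampleTail}) yielding the tail $\exp(-\tfrac12 r^{1/3}\vep^{1/3})$; you invoke Gaussian hypercontractivity and a Rosenthal moment bound, which is cleaner and in principle equivalent. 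For the net-to-sphere step, the paper goes through angle preservation via the parallelogram identity (Lemmas~\ref{lem:JLT} and~\ref{lem:angle_norm}, following~\cite{woodruff2014sketching}), whereas your direct quadratic-form bound $\|\mathsf{B}\|\le(1-2\gamma)^{-1}\max_{\mathcal N}|\mathsf{B}(\ysf)|$ is the more streamlined modern version of the same idea.

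Two points to watch. First, your stated heavy-tail exponent $\exp(-c(r\vep)^{1/2})$ is inconsistent with your own Rosenthal bound $\|\sum_i\zeta_i^2\|_{L^q}\lesssim\sqrt{qr}+q^3 r^{1/q}$: optimizing Markov's inequality over $q$ against a $q^3$ term gives $\exp(-c(r\vep)^{1/3})$, which is what the paper obtains and what actually produces $r\gtrsim L^3/\vep$ after the union bound over $9^{p^2}$ net points. (A genuine $(r\vep)^{1/2}$ tail would yield only $r\gtrsim L^2/\vep$, so one of your two displayed claims must be adjusted.) Second, the paper's $\vep^{-5/2}$ term does not come from any crossover optimization; it arises from side constraints in the truncation argument---specifically the requirements $M\ge 4p$ and $Mt\ge 12$ needed to pass from \eqref{eq:tj0} to the clean form \eqref{eq:tj3}. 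Your appeal to ``tracking the residual $r^{1/q}$ correction'' is too vague here: you should either derive the analogous constraint from your moment method explicitly, or acknowledge that the secondary condition on $r$ emerging from your route may differ from $\vep^{-5/2}$.
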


\begin{corollary}\label{cor:maincase2}
  Consider the same matrices $\Ssf$, $\Fsf$, $\Gsf$, and $\Asf$ as in
  Theorem~\ref{thm:maincase2}, with $p \ge 6$, and assume that vector
  $\bsf$ is of the form \eqref{eqn:bstructure}. Then for given
  $\vep,\delta \in (0,1/2)$, the random matrix $\Ssf$ is an
  $(\vep,\delta)$-$l^2$ embedding of the augmented matrix $\bar \Asf
  \defeq [\Asf,\bsf]$ provided that
  \begin{equation}\label{eqn:r_bound2cor}
    r \geq C \max \left\{ \frac{1}{ \vep} \left(\left|
    \log \delta \right| + (p+1)^2 \right)^3 \,,
    \frac{1}{\vep^{5/2}}\right\}\,,
  \end{equation}
  where the constant $C>0$ is independent of $\vep$, $\delta$, $n_1$,
  $n_2$, and $p$.
\end{corollary}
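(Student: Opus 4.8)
The plan is to follow exactly the reduction used in the proof of Corollary~\ref{cor:maincase1}, substituting Theorem~\ref{thm:maincase2} for Theorem~\ref{thm:maincase1} as the engine. First I would introduce the augmented factor matrices $\bar\Fsf = [\Fsf,\fsf_\bsf]$ and $\bar\Gsf = [\Gsf,\gsf_\bsf]$ from~\eqref{eq:FGbar}, each now carrying $p+1$ columns while leaving the ambient dimensions $n_1,n_2$ (and hence the sketching matrix $\Ssf$ itself) unchanged. The key structural observation is that the Kronecker product $\bar\Fsf\otimes\bar\Gsf$ has as its columns all tensors $\fsf_i\otimes\gsf_j$ with $i,j\in\{1,\ldots,p\}\cup\{\bsf\}$; in particular the columns $\amat_j=\fsf_j\otimes\gsf_j$ of $\Asf$, together with $\bsf=\fsf_\bsf\otimes\gsf_\bsf$ (by the structural assumption~\eqref{eqn:bstructure}), all appear among them. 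Hence
\[
\Range{\bar\Asf} = \text{Span}\{\amat_1,\ldots,\amat_p,\bsf\} \subseteq \Range{\bar\Fsf\otimes\bar\Gsf}\,.
\]

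Next I would apply Theorem~\ref{thm:maincase2} to the pair $(\bar\Fsf,\bar\Gsf)$ in place of $(\Fsf,\Gsf)$. Since each augmented matrix now has $p+1$ columns, the hypothesis $p\ge 6$ of the theorem becomes $p+1\ge 6$, which is implied by the assumed $p\ge 6$, and the row bound~\eqref{eqn:r_bound2} is obtained by replacing $p$ with $p+1$ throughout, producing precisely the bound~\eqref{eqn:r_bound2cor}. The theorem then guarantees that, with probability at least $1-\delta$, $\Ssf$ is an $(\vep,\delta)$-$l^2$ embedding of $\bar\Fsf\otimes\bar\Gsf$. Because the defining inequality~\eqref{eqn:l2emb} is quantified over \emph{all} vectors in the range, an $l^2$ embedding of a space is automatically an $l^2$ embedding of each of its subspaces, so the containment above immediately yields that $\Ssf$ is an $(\vep,\delta)$-$l^2$ embedding of $\Range{\bar\Asf}$.

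Finally I would dispose of the rank bookkeeping exactly as in Corollary~\ref{cor:maincase1}: if both $\bar\Fsf$ and $\bar\Gsf$ have full rank $p+1$, the theorem applies directly; if one of them (say $\bar\Fsf$) fails to be full rank while the other is, then $\Range{\bar\Fsf\otimes\bar\Gsf}\subseteq\Range{\Fsf\otimes\bar\Gsf}$ and one applies the theorem to the full-rank pair $(\Fsf,\bar\Gsf)$, with the remaining cases handled symmetrically. I do not anticipate a genuine obstacle, since this is a direct corollary; the only point requiring care is the dimension bookkeeping, namely confirming that the hypothesis $p\ge 6$ survives the shift $p\mapsto p+1$ and that the $(p+1)^2$ appearing in~\eqref{eqn:r_bound2cor} is exactly what Theorem~\ref{thm:maincase2} produces for factor matrices with $p+1$ columns.
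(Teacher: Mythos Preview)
Your proposal is correct and matches the paper's approach exactly: the paper omits the proof, stating only that it is similar to that of Corollary~\ref{cor:maincase1}, which is precisely the reduction you describe (augment $\Fsf,\Gsf$ to $\bar\Fsf,\bar\Gsf$, invoke Theorem~\ref{thm:maincase2} with $p$ replaced by $p+1$, pass to the subspace $\Range{\bar\Asf}$, and handle the rank-deficient cases the same way).
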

We omit the proof since it is similar to that of
Corollary~\ref{cor:maincase1}. 

Theorems~\ref{thm:obliv_preserv} and \ref{thm:sketching} yield the
fundamental results that, with high probability, for any fixed
overdetermined linear problem, the sketched problem in which $\Smat$
is a Gaussian matrix can achieve optimal residual up to a small
multiplicative error. 
In particular, as will be clear in the proof later, the Case-1
tensor-structured sketching matrix $\Smat=\Pmat\otimes\Qmat$ not only
$(\vep,\delta)$-$l^2$ embeds $\Amat = \Fsf\otimes \Gsf$, but the
number of rows in $\Pmat$ and $\Qmat$ each depends only linearly on
$p$ (see \eqref{eqn:r_bound1}), so that the number of rows in $\Smat$
scales like $p^2$. If the Case-2 sketching matrix is used, the
dependence of $r$ on $p$ and $\vep$ is more complex. Whether this
bound is greater than or less than the bound for Case 1 depends on the
relative sizes of $\vep^{-1}$ and $p$.

\blue{We stress that both bounds show that the number of rows in $\Smat$ is independent of the dimension $n \defeq
n_1 n_2$ of the ambient space. This allows $n$ to be potentially infinity. We also stress that the dependence on $\epsilon$ and $p$ may not be optimal, and the bound may not be tight. As will be seen in the later sections, we have limited understanding of quartic powers of Gaussian random variables, and this confines us obtaining a tighter bound.}
	
	%
	
\section{Case 1: Proof of Theorem~\ref{thm:maincase1}}\label{sec:case1}

In this section we present the proof of
Theorem~\ref{thm:maincase1}. We start with technical results.

\begin{lemma}\label{lem:1}
  Consider natural numbers $r_2$, $n_1$, and $n_2$, and assume that a
  random matrix $\Qsf\in \Rbb^{r_2\times n_2}$ is an
  $(\vep,\delta)$-$l^2$ embedding of $\Rbb^{n_2}$, meaning that with
  probability at least $1-\delta$, $\Qsf$ preserves $l^2$ norm with
  $\vep$ accuracy, that is,
  \[
  \left| \| \Qsf \xsf \|^2 - \|\xsf \|^2 \right| \leq \vep \|\xsf\|^2 \,,\quad \mbox{for all $\xsf\in\Rbb^{n_2}$.}
  \]
  Then the Kronecker product $\Idsf_{n_1} \otimes \Qsf$ is an
  $(\vep,\delta)$-$l^2$ embedding of $\Rbb^{n_1 n_2}$. Similarly, if
  $\Qsf\in \Rbb^{r_1\times n_1}$ is an $(\vep,\delta)$-$l^2$ embedding
  of $\Rbb^{n_1}$, then $\Qsf\otimes\Idsf_{n_2}$ is an
  $(\vep,\delta)$-$l^2$ embedding of $\Rbb^{n_1 n_2}$.
\end{lemma}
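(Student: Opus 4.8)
The plan is to exploit the block-diagonal action of $\Idsf_{n_1}\otimes \Qsf$ and reduce the claim on all of $\Rbb^{n_1 n_2}$ to the assumed embedding property of $\Qsf$ on all of $\Rbb^{n_2}$, block by block. First I would fix the identification $\Rbb^{n_1 n_2}\cong \Rbb^{n_1}\otimes\Rbb^{n_2}$ and write an arbitrary $\ymat\in\Rbb^{n_1 n_2}$ in block form $\ymat=\sum_{k=1}^{n_1} \esf_k\otimes \ymat^{(k)}$, where $\esf_k$ is the $k$th standard basis vector of $\Rbb^{n_1}$ and $\ymat^{(k)}\in\Rbb^{n_2}$ is the $k$th block. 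Since $(\Idsf_{n_1}\otimes\Qsf)(\esf_k\otimes \ymat^{(k)})=\esf_k\otimes(\Qsf\ymat^{(k)})$ and the blocks live in orthogonal coordinate subspaces, I obtain the two Pythagorean identities $\|\ymat\|^2=\sum_{k}\|\ymat^{(k)}\|^2$ and $\|(\Idsf_{n_1}\otimes\Qsf)\ymat\|^2=\sum_k \|\Qsf\ymat^{(k)}\|^2$.

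Next I would condition on the single event, of probability at least $1-\delta$, on which $\Qsf$ is an $\vep$-embedding of \emph{all} of $\Rbb^{n_2}$. On that event the hypothesis applies simultaneously to every block, giving $\big|\|\Qsf\ymat^{(k)}\|^2-\|\ymat^{(k)}\|^2\big|\le \vep\|\ymat^{(k)}\|^2$ for each $k$. Summing over $k$ and combining the triangle inequality with the two identities above yields
\[
\Big|\|(\Idsf_{n_1}\otimes\Qsf)\ymat\|^2-\|\ymat\|^2\Big|\le \sum_{k=1}^{n_1}\big|\|\Qsf\ymat^{(k)}\|^2-\|\ymat^{(k)}\|^2\big|\le \vep\sum_{k=1}^{n_1}\|\ymat^{(k)}\|^2=\vep\|\ymat\|^2,
\]
which is exactly the $(\vep,\delta)$-$l^2$ embedding inequality for $\Idsf_{n_1}\otimes\Qsf$ on $\Rbb^{n_1 n_2}$.

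The point I would stress is the probability bookkeeping: because the hypothesis on $\Qsf$ is a \emph{uniform} statement over all of $\Rbb^{n_2}$ that holds on a single event of probability at least $1-\delta$, the derived bound holds for every $\ymat\in\Rbb^{n_1 n_2}$ simultaneously on that same event. Hence no union bound over the $n_1$ blocks (or over a net) is required and the failure probability remains $\delta$. This is the only place where one could lose a factor, and the uniform formulation of Definition~\ref{def:l2emb} is precisely what prevents it; I expect this to be the main (if modest) subtlety, rather than any computation.

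Finally, the second assertion, for $\Qsf\otimes\Idsf_{n_2}$ with $\Qsf$ an embedding of $\Rbb^{n_1}$, follows by the identical argument after grouping the coordinates of $\ymat$ into $n_2$ blocks of size $n_1$ the other way. Equivalently, one may invoke the perfect-shuffle permutation matrix $\mathsf{K}$ satisfying $\mathsf{K}(\Qsf\otimes\Idsf_{n_2})=(\Idsf_{n_2}\otimes\Qsf)\mathsf{K}$; since $\mathsf{K}$ is orthogonal it preserves all $l^2$ norms, so applying the case already proved to $\mathsf{K}\ymat$ reduces the claim to the first part.
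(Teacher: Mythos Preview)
Your proof is correct and follows essentially the same route as the paper: block-decompose $\ymat\in\Rbb^{n_1 n_2}$ into $n_1$ pieces in $\Rbb^{n_2}$, use the block-diagonal action of $\Idsf_{n_1}\otimes\Qsf$ to get the two Pythagorean identities, apply the uniform $(\vep,\delta)$ hypothesis on the single high-probability event to every block, and sum. Your explicit remark that no union bound is needed and your alternative shuffle-permutation argument for the second claim are nice touches, but the underlying argument is the same as the paper's.
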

\begin{proof}
  The proof for the two statements are rather similar, so we prove
  only the first claim.
  
  Any $\xsf \in \Rbb^{n_1n_2}$ can be written in the following form
  \[	\xsf = 	\begin{bmatrix}
    \xsf_1 \\
    \vdots\\
    \xsf_{n_1}
  \end{bmatrix}\,, \quad \mbox{where  $\xsf_i \in \Rbb^{n_2}$, $i=1,2,\dotsc,n_1$.} 
  \]
   Then
  \[
  (\Idsf_{n_1} \otimes \Qsf) \xsf = 
  \begin{bmatrix}
    \Qsf & & \\
    & \ddots & \\
    & & \Qsf
  \end{bmatrix}
  \begin{bmatrix}
    \xsf_1 \\
    \vdots\\
    \xsf_{n_1}
  \end{bmatrix}
  =
  \begin{bmatrix}
    \Qsf\xsf_1 \\
    \vdots\\
    \Qsf\xsf_{n_1}
  \end{bmatrix}\,.
  \]
Thus, we have
  \begin{equation}\label{eqn:1}
    \| (\Idsf_{n_1} \otimes \Qsf) \xsf \|^2 = \sum_{i=1}^n \| \Qsf \xsf_i \|^2\,, \quad \| \xsf\|^2 = \sum_{i=1}^n \| \xsf_i\|^2\,.
  \end{equation}
  Since $\Qsf$ is an $(\vep,\delta)$-$l^2$ embedding of $\Rbb^{n_2}$,
  then with probability at least $1-\delta$, for all
  $\xsf_i\in\Rbb^{n_2}$, we have
  \begin{equation}\label{eqn:2}
    \left| \| \Qsf \xsf_i \|^2 - \|\xsf_i \|^2 \right| \leq \vep \|\xsf_i\|^2 \,, \quad \mbox{for all $i = 1,2,\dotsc,n_1$.}
  \end{equation}
  By using this bound in~\eqref{eqn:1}, with probability at least
  $1-\delta $, we have for all $\xsf \in \Rbb^{n_1 n_2}$ that 
  \[
    \left| \| (\Idsf_{n_1} \otimes \Qsf) \xsf \|^2 - \| \xsf\|^2  \right|  \leq \sum_{i=1}^n \left| \| \Qsf \xsf_i \|^2 - \| \xsf_i\|^2 \right| \leq \vep \sum_{i=1}^n  \| \xsf_i\|^2= \vep \| \xsf \|^2\,,
  \]
  so that $(\Idsf_{n_1} \otimes \Qsf)$ is an $(\vep,\delta)$-$l^2$
  embedding of $\Rbb^{n_1n_2}$, as claimed.
\end{proof}

The following corollary extends the previous result and discusses the
embedding property of $\Psf\otimes\Qsf$.


\begin{corollary}\label{cor:1}
  Assume two random matrices $\Psf\in\Rbb^{r_1\times n_1}$ and $\Qsf
  \in \Rbb^{r_2\times n_2}$ are $(\vep,\delta)$-$l^2$ embeddings of
  $\Rbb^{n_1}$ and $\Rbb^{n_2}$, respectively. Then the Kronecker
  product $\Psf \otimes \Qsf \in \Rbb^{r_1r_2\times n_1 n_2}$ is an
  $\left(\vep(2+\vep),2\delta \right)$-$l^2$ embedding of $\Rbb^{n_1
    n_2}$.
\end{corollary}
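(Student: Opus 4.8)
The plan is to realize $\Psf \otimes \Qsf$ as a composition of two embeddings, each of which is handled by Lemma~\ref{lem:1}, and then to chain their distortions through a single union bound. The starting point is the mixed-product identity
\[
\Psf \otimes \Qsf = (\Psf \otimes \Idsf_{r_2})(\Idsf_{n_1} \otimes \Qsf)\,,
\]
which follows from the rule $(\Amat\otimes\Bmat)(\Cmat\otimes\Dmat) = (\Amat\Cmat)\otimes(\Bmat\Dmat)$ applied with $\Amat=\Psf$, $\Bmat=\Idsf_{r_2}$, $\Cmat=\Idsf_{n_1}$, $\Dmat=\Qsf$. The dimensions are consistent: $\Idsf_{n_1}\otimes\Qsf \in \Rbb^{n_1 r_2 \times n_1 n_2}$ and $\Psf\otimes\Idsf_{r_2}\in\Rbb^{r_1 r_2\times n_1 r_2}$, so the product lands in $\Rbb^{r_1 r_2 \times n_1 n_2}$ as it should.

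Next I would invoke Lemma~\ref{lem:1} on each factor. By its first part, $\Idsf_{n_1}\otimes\Qsf$ is an $(\vep,\delta)$-$l^2$ embedding of $\Rbb^{n_1 n_2}$; by its second part (with $\Psf$ playing the role of the embedding of $\Rbb^{n_1}$ and $\Idsf_{r_2}$ on the right), $\Psf\otimes\Idsf_{r_2}$ is an $(\vep,\delta)$-$l^2$ embedding of $\Rbb^{n_1 r_2}$. Let $E_1$ be the event that the first embedding bound holds for all $\xsf\in\Rbb^{n_1 n_2}$, and $E_2$ the event that the second holds for all vectors in $\Rbb^{n_1 r_2}$; each has probability at least $1-\delta$, so a union bound gives $\Pr(E_1\cap E_2)\ge 1-2\delta$. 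No independence of $\Psf$ and $\Qsf$ is needed, since the union bound is unconditional.

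On $E_1 \cap E_2$, I would fix $\xsf\in\Rbb^{n_1 n_2}$ and set $\ysf \defeq (\Idsf_{n_1}\otimes\Qsf)\xsf$. Event $E_1$ gives $(1-\vep)\|\xsf\|^2 \le \|\ysf\|^2 \le (1+\vep)\|\xsf\|^2$, and event $E_2$ then gives $(1-\vep)\|\ysf\|^2 \le \|(\Psf\otimes\Idsf_{r_2})\ysf\|^2 \le (1+\vep)\|\ysf\|^2$. Since $(\Psf\otimes\Idsf_{r_2})\ysf = (\Psf\otimes\Qsf)\xsf$, composing the two chains yields $(1-\vep)^2\|\xsf\|^2 \le \|(\Psf\otimes\Qsf)\xsf\|^2 \le (1+\vep)^2\|\xsf\|^2$. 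The upper side gives $\|(\Psf\otimes\Qsf)\xsf\|^2 - \|\xsf\|^2 \le ((1+\vep)^2-1)\|\xsf\|^2 = \vep(2+\vep)\|\xsf\|^2$, while the lower side gives $\|(\Psf\otimes\Qsf)\xsf\|^2 - \|\xsf\|^2 \ge ((1-\vep)^2-1)\|\xsf\|^2 = -\vep(2-\vep)\|\xsf\|^2 \ge -\vep(2+\vep)\|\xsf\|^2$, so $\big|\|(\Psf\otimes\Qsf)\xsf\|^2 - \|\xsf\|^2\big| \le \vep(2+\vep)\|\xsf\|^2$ for all $\xsf$, which is the claimed $\left(\vep(2+\vep),2\delta\right)$-$l^2$ embedding.

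The computation is entirely routine, so the only step I would flag as subtle is the middle one: although $\ysf$ is itself a random vector (it depends on $\Qsf$), the embedding property guaranteed on $E_2$ holds \emph{uniformly over every} vector of $\Rbb^{n_1 r_2}$, and this uniformity is exactly what licenses applying it to the data-dependent $\ysf$. This is why the two probabilistic statements combine cleanly via one union bound, without any conditioning or independence argument, and it is the point where a careless version of the proof could go wrong.
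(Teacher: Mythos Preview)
Your proposal is correct and follows essentially the same route as the paper: the same factorization $\Psf\otimes\Qsf=(\Psf\otimes\Idsf_{r_2})(\Idsf_{n_1}\otimes\Qsf)$, the same two invocations of Lemma~\ref{lem:1}, and the same union bound to reach probability $1-2\delta$. The only cosmetic difference is that the paper bounds the error via a triangle inequality $\bigl|\|(\Psf\otimes\Qsf)\xsf\|^2-\|\xsf\|^2\bigr|\le \vep\|\ysf\|^2+\vep\|\xsf\|^2\le \vep(2+\vep)\|\xsf\|^2$, whereas you chain the multiplicative bounds $(1\pm\vep)^2$; both give the same constant, and your remark about the uniformity of the event $E_2$ licensing its application to the random $\ysf$ is exactly the point that makes the argument work.
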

\begin{proof}
  Noting that (see \eqref{eqn:kron1} in Appendix~\ref{sec:app1}),
  \[
  \Psf\otimes \Qsf = (\Psf \otimes \Idsf_{r_2})(\Idsf_{n_1} \otimes \Qsf)\,,
  \]
we have
  \[
    \| (\Psf\otimes \Qsf )\xsf \|^2  = \| (\Psf \otimes \Idsf_{r_2})(\Idsf_{n_1} \otimes \Qsf) \xsf \|^2 = \|(\Psf\otimes\Idsf_{r_2})\ysf\|^2\,,
  \]
  where $\ysf \defeq (\Idsf_{n_1} \otimes \Qsf) \xsf$.
  
  Denote by $(\Omega_1 ,{\mathcal {F}_1},\Pi_1) $ and $(\Omega_2
  ,{\mathcal {F}}_2,\Pi_2)$ the probability triplets for $\Psf$ and
  $\Qsf$, respectively. Since $\Psf$ is an $(\vep,\delta)$-$l^2$
  embedding of $\Rbb^{n_1}$, we have with probability at least $1-\delta$ in
  $\Pi_1$ that 
  \[
  \left|\|  (\Psf\otimes \Idsf_{r_2}) \ysf \|^2 - \| \ysf \|^2 \right| \leq \vep \| \ysf \|^2\,.
  \]
  Similarly, with probability at least $1-\delta$ for the choice of
  $\Qsf$ in $\Pi_2$, we have
  \[
  \left| \| (\Idsf_{n_1} \otimes \Qsf) \xsf \|^2 - \| \xsf\|^2  \right| \leq  \vep \| \xsf \|^2\,, \quad \mbox{for all $\xsf\in \Rbb^{n_1 n_2}$.}
  \]
  Combining the two inequalities, we have with probability at least
  $1-2\delta$ in the joint probability space of $\Pi_1$ and $\Pi_2$
  that the following is true for all $\xsf\in\Rbb^{n_1n_2}$:
  \begin{align*}
    \left| \| (\Psf\otimes \Qsf )\xsf \|^2  - \|  \xsf \|^2 \right| &\leq \left|  \|  (\Psf\otimes \Idsf_{r_2} )\ysf \|^2 - \|\ysf \|^2 \right| + \left| \| (\Idsf_{n_1} \otimes \Qsf) \xsf \|^2 - \| \xsf\|^2  \right|  \\
    & \leq \vep  \|\ysf \|^2 + \vep \| \xsf \|^2\\
    & = \vep  \| (\Idsf_{n_1} \otimes \Qsf) \xsf \|^2 + \vep \| \xsf \|^2\\
    & \leq \vep(2+\vep ) \| \xsf \|^2 \,.
  \end{align*}
  This concludes the proof.
\end{proof}

Now we are ready to show the proof of Theorem \ref{thm:maincase1},
obtained by applying Theorem~\ref{thm:sketching} to
Corollary~\ref{cor:1}.
\begin{proof}[Proof of Theorem \ref{thm:maincase1}]
  For any vector $\ysf$ in the span of $\Fsf\otimes
  \Gsf$, we can write
  \[
  \ysf =( \Usf_\Fsf \otimes \Usf_\Gsf ) \xsf \,, \quad \text{for some } \xsf \in \Rbb^{p^2}\,,
  \]
  where $\Usf_\Fsf \in \Rbb^{n_1 \times p}$ and $\Usf_\Gsf\in
  \Rbb^{n_2 \times p}$ collect the left singular vectors of matrices
  $\Fsf$ and $\Gsf$, respectively. By applying \eqref{eqn:kron1} from
  Appendix~\ref{sec:app1}, we have
  \[
  ( \Usf_\Fsf \otimes \Usf_\Gsf ) = (\Usf_\Fsf \otimes \Idsf_{n_2})(
  \Idsf_p \otimes \Usf_\Gsf).
  \]
  It is easy to see that the matrix $\Idsf_p \otimes \Usf_\Gsf$ has
  orthonormal columns, so it is an isometry. The matrices $\Usf_\Fsf
  \otimes \Idsf_{n_2}$ and $\Usf_\Fsf \otimes \Usf_\Gsf$ are
  isometries for the same reason. As a consequence, we have $\| \ysf
  \|^2 = \| \xsf \|^2$. From \eqref{eqn:kron1} in
  Appendix~\ref{sec:app1}, we have by defining $\widetilde{\Psf}
  \defeq \Psf \Usf_\Fsf \in \Rbb^{r_1 \times p}$ and $\widetilde{\Qsf}
  \defeq \Qsf \Usf_\Gsf\in \Rbb^{r_2 \times p}$ that
  \begin{equation} \label{eq:xu8}
    \Ssf \ysf = (\Psf \otimes \Qsf )(\Usf_\Fsf \otimes \Usf_\Gsf) \xsf = (\Psf \Usf_\Fsf) \otimes (\Qsf \Usf_\Gsf) \xsf = (\widetilde{\Psf} \otimes \widetilde{\Qsf}) \xsf\,.
  \end{equation}
  Due to the orthogonality of $\Usf_\Fsf$ and $\Usf_\Gsf$, the random
  matrices $\widetilde{\Psf}$ and $\widetilde{\Qsf}$ are also
  independent Gaussian matrices \blue{with i.i.d. entries}. According to
  Theorem~\ref{thm:sketching}, for any pair
  $\tilde{\vep},\tilde{\delta}\in (0,1/2)$, by choosing $r_i$ to
  satisfy
  \begin{equation}\label{eqn:r_i_case1}
    r_i \geq \frac{C}{\tilde{\vep}^2} (|\log\tilde{\delta} | + p)\,, \quad i=1,2\,,
  \end{equation}
  we have that $\widetilde{\Psf}$ and $\widetilde{\Qsf}$ are both
  $(\tilde{\vep},\tilde{\delta})$-$l^2$ embeddings of
  $\Rbb^{p}$. Thus, from Corollary~\ref{cor:1}, the tensor product
  $(\widetilde{\Psf} \otimes \widetilde{\Qsf})$ is an
  $\left(\tilde{\vep}(2+\tilde{\vep}),2\tilde{\delta} \right)$-$l^2$
  embedding of $\Rbb^{p^2}$, meaning that  with probability at least
  $1-2\tilde{\delta}$, we have
  \[
  \left|\|  (\widetilde{\Psf} \otimes \widetilde{\Qsf}) \xsf \|^2 - \| \xsf \|^2 \right| \leq \tilde{\vep} (2+\tilde{\vep}) \| \xsf \|^2\,,\quad \text{for all } \xsf \in \Rbb^{p^2}\,.
  \]
Recalling $\| \xsf \|^2 = \| \ysf \|^2$ and \eqref{eq:xu8}, we have that
  \[
  \left|\|  \Ssf \ysf  \|^2 - \| \ysf\|^2 \right| \leq \tilde{\vep} (2+\tilde\vep) \| \ysf \|^2 \,,\quad \text{for all } \ysf \in \text{Span}\{\Fsf \otimes \Gsf \} \,.
  \]
  By defining $\vep = \tilde{\vep} (2+ \tilde{\vep})$ and $\delta = 2
  \tilde{\delta}$, we have
  \[
  \tilde{\vep} = \frac{\vep}{\sqrt{1+\vep}+1} \,, \quad \text{and } \quad \tilde{\delta} = \frac{\delta }{2}\,.
  \]
  Note that if $\vep$ and $\delta$ are in $(0,1/2)$, then $\tilde\vep$
  and $\tilde\delta$ are also in this interval, so
  \eqref{eqn:r_i_case1} applies. By substituting
  into~\eqref{eqn:r_i_case1} we obtain 
  \[
  r_i \geq \frac{C}{\vep^2} (|\log\delta | + p)\,, \quad i=1,2\,.
  \]
  The constant $C$ here is different from the value
  in~\eqref{eqn:r_i_case1} but can still be chosen independently of
  $\vep$, $\delta$, $n_1$, $n_2$, and $p$. We conclude that
  $\Smat=\Psf\otimes\Qsf$ is an $(\vep,\delta)$-$l^2$ embedding of
  $\Fsf\otimes \Gsf$ and thus also an $(\vep,\delta)$-$l^2$ embedding
  of $\Asf$.
\end{proof}

\section{Case 2: Proof of Theorem~\ref{thm:maincase2}}\label{sec:case2}

In this section we investigate Case-2 sketching matrices, which have
the form~\eqref{eqn:formScase2}.
	
We prove Theorem~\ref{thm:maincase2} in two major steps. First, in
Section~\ref{sec:case2_y}, we investigate the accuracy and probability
of embedding any given vector $\ysf \in\text{Span}\{\Fsf\otimes
\Gsf\}$. Second, in Section~\ref{sec:case2_net}, we extend this study
to deal with the whole space $\text{Span}\{\Fsf\otimes\Gsf\}$. To do
so, we first build a $\gamma$-net over the unit sphere in
$\text{Span}\{\Fsf\otimes \Gsf\}$ so that we can ``approximate'' the
space using a finite set of vectors. By adjusting $\vep$ and $\delta$,
one not only preserves the norm, but also the angles between the
vectors on the net. We then map the net back to the space to show that
$\Smat$ preserves the norm of the vectors in the whole space. This
standard technique is used in \cite{woodruff2014sketching} to prove
their Theorem~\ref{thm:sketching}.

\subsection{Embedding a given vector}\label{sec:case2_y}

We establish the following result, whose proof appears at the end of
the subsection.
\begin{proposition}\label{prop:one_y}
  Given two full rank matrices $\Fsf$ and $\Gsf$ as in \eqref{eqn:FG}
  and $\vep \in (0,1/2)$, let $\Ssf\in\mathbb{R}^{r\times n_1n_2}$
  have the form of~\eqref{eqn:formScase2}, with $\pmat_i$ and
  $\qmat_i$, $i=1,2,\dotsc,r$ being i.i.d. Gaussian vectors. Then for
  any fixed $\ysf \in\text{Span}\{\Fsf\otimes\Gsf\}$, we have that
  \[
  \Pr\left( \left| \| \Ssf \ysf\|^2 - \| \ysf \|^2 \right| > \vep \|
  \ysf \|^2 \right) \leq 5r \exp \left( \frac34 p^{1/2} \right)
  \exp\left( -\frac12 r^{1/3} \vep^{1/3}\right)\,,
  \]
  provided that
  \[
  r \geq 8 \cdot 3^{3/2} \cdot \max\{ \vep^{-5/2}, p^{3/2} \vep^{-1}\}
  \,.
  \]
\end{proposition}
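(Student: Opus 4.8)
The plan is to exploit the i.i.d.\ row structure of the Case-2 sketch and reduce $\|\Ssf\ysf\|^2$ to a normalized sum of i.i.d.\ squared bilinear Gaussian chaoses, then prove concentration of that sum by truncation. First I would \emph{matricize}. Since $\ysf\in\text{Span}\{\Fsf\otimes\Gsf\}$, we may write $\ysf=(\Fsf\otimes\Gsf)\xsf$ for some $\xsf\in\Rbb^{p^2}$; reshaping $\xsf$ into $\Xsf\in\Rbb^{p\times p}$ and $\ysf$ into a matrix $\Ysf\in\Rbb^{n_1\times n_2}$ gives $\Ysf=\Fsf\Xsf\Gsf^\top$, a matrix of rank at most $p$ with $\|\Ysf\|_F=\|\ysf\|$. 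The Kronecker identity $(\pmat_i\otimes\qmat_i)^\top\ysf=\pmat_i^\top\Ysf\qmat_i$ then yields
\[
\|\Ssf\ysf\|^2=\frac1r\sum_{i=1}^r Z_i^2,\qquad Z_i:=\pmat_i^\top\Ysf\qmat_i ,
\]
with the $Z_i$ i.i.d. Writing $\Ysf=\Usf\Sigma\Vsf^\top$ for its thin SVD and setting $g^{(i)}:=\Usf^\top\pmat_i$, $h^{(i)}:=\Vsf^\top\qmat_i$ — which are independent standard Gaussian vectors in $\Rbb^p$ because $\Usf,\Vsf$ have orthonormal columns — reduces each term to the canonical form $Z_i=\sum_{j=1}^p\sigma_j g^{(i)}_j h^{(i)}_j$ with $\sum_j\sigma_j^2=\|\ysf\|^2$. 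After rescaling we may assume $\|\ysf\|=1$, so $\Ebb[Z_i^2]=1$ and the target becomes concentration of $\tfrac1r\sum_i Z_i^2$ about $1$.

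Second, I would control the tail of a single $Z_i$. Conditioning on $h^{(i)}$, $Z_i$ is centered Gaussian with variance $V_i=\sum_j\sigma_j^2 (h^{(i)}_j)^2$, so $\Ebb[Z_i^{2k}\mid h^{(i)}]=(2k-1)!!\,V_i^k$; since $\sum_j\sigma_j^2=1$, Jensen's inequality gives $V_i^k\le\sum_j\sigma_j^2(h^{(i)}_j)^{2k}$ and hence the $p$-free moment bound $\Ebb[Z_i^{2k}]\le((2k-1)!!)^2$. Converting these moments (or arguing directly from $V_i\le\max_j(h^{(i)}_j)^2$ and a Gaussian tail) shows $Z_i$ is \emph{sub-exponential}, with $\Pr(Z_i^2>u)\lesssim\exp(-\tfrac12\sqrt u)$ up to a mild $p$-dependent factor. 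This is exactly the difficulty: $Z_i^2$ has a sub-Weibull (exponent $1/2$) tail, so $\Ebb[e^{\lambda Z_i^2}]=\infty$ for every $\lambda>0$, and no exponential-moment Chernoff/Bernstein bound can be applied to $\sum_i Z_i^2$ directly.

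Third, I would run a \textbf{truncation} argument. Fix a threshold $T$ and split $Z_i^2=(Z_i^2\wedge T)+(Z_i^2-T)_+$. The \emph{bulk} part $\tfrac1r\sum_i(Z_i^2\wedge T)$ averages i.i.d.\ variables bounded by $T$ with variance $\le\Ebb[Z_i^4]\le 9$, so Bernstein's inequality bounds its deviation from its mean by roughly $\exp\!\big(-c\,r\vep^2/(1+T\vep)\big)$, while its mean differs from $1$ only by the bias $\eta:=\Ebb[(Z_i^2-T)_+]$, negligible once $T$ is large. The \emph{tail} part is nonzero only if some $Z_i^2$ exceeds $T$, so a union bound controls it by $r\,\Pr(Z_1^2>T)\lesssim r\exp(-\tfrac12\sqrt T)$. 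These pieces balance when the Bernstein exponent $r\vep/T$ and the tail exponent $\sqrt T$ are comparable, forcing $T\asymp(r\vep)^{2/3}$; then both exponents are of order $(r\vep)^{1/3}=r^{1/3}\vep^{1/3}$, the tail piece contributing the stated factor $\exp(-\tfrac12 r^{1/3}\vep^{1/3})$ and the union bound the prefactor $\propto r$. The two terms in the hypothesis on $r$ are precisely the side conditions of this balance: the Bennett/Bernstein range term dominates its variance term only when $T\vep\gtrsim1$, i.e.\ $r\gtrsim\vep^{-5/2}$; and the truncation level must exceed the effective scale $p$ of $Z_i$ (so that $\eta\le\vep/2$ and the $p$-dependent factor in $\Pr(Z_1^2>T)$ is tamed), i.e.\ $T\gtrsim p$, equivalently $r\gtrsim p^{3/2}\vep^{-1}$; tracking the constant in this $p$-dependence produces the $\exp(\tfrac34 p^{1/2})$ prefactor.

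The step I expect to be the main obstacle is the second one together with the $p$-bookkeeping in the third: because $Z_i^2$ has no finite exponential moment, truncation is unavoidable, and the quality of the final bound is dictated entirely by how sharply one estimates $\Pr(Z_1^2>T)$ and the bias $\eta$ for the bilinear chaos $Z_i=\sum_j\sigma_j g_j h_j$ — in particular, extracting the clean scaling $\exp(-\tfrac12\sqrt T)$ while keeping the unavoidable $p$-dependence down to $\exp(\tfrac34 p^{1/2})$ under the assumption $T\gtrsim p$. Everything else (the Kronecker matricization, the SVD reduction, and the Bernstein estimate on the bounded bulk) is routine once this tail control is in place.
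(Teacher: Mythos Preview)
Your proposal is correct and follows essentially the same route as the paper: the SVD/matricization reduction to $\|\Ssf\ysf\|^2=\tfrac1r\sum_i\zeta_i^2$ with $\zeta_i=\xi_i^\top\Sigma\eta_i$ is the paper's Lemma~\ref{lem:SyDistribution}, your sub-exponential tail for $\zeta_i$ matches Lemma~\ref{lem:zetabound} (the paper obtains $\Pr(|\zeta|>t)\le 2e^{-(2t-3\sqrt p)/4}$ for $t\ge 2\sqrt p$ via an MGF argument, yielding exactly the $e^{3\sqrt p/4}$ prefactor), and your truncation-plus-Bernstein argument with threshold $T\asymp(r\vep)^{2/3}$ is precisely Lemma~\ref{lem:zetaSampleTail} with $M=r^{2/3}t^{2/3}$. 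The only cosmetic difference is that the paper conditions on the event $\{\max_i\zeta_i^2\le M\}$ rather than literally splitting $Z_i^2=(Z_i^2\wedge T)+(Z_i^2-T)_+$ as you do; your version with the explicit bias term $\eta=\Ebb[(Z_i^2-T)_+]$ is arguably cleaner, but the two implementations lead to the same balance and the same final bound.
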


Essentially, this proposition says that $\Smat$ is an $(\vep,5r \exp
\left( (3/4)p^{1/2} \right) \exp\left( -(1/2) r^{1/3}
\vep^{1/3}\right))$-$l^2$ embedding of any fixed
$\ymat\in\text{Span}\{\Fsf\otimes\Gsf\}$. The contribution from the
factor $ \exp\left( -(1/2) r^{1/3} \vep^{1/3}\right)$ is small when
$r$ is large.

We start with several technical lemmas.
Lemma~\ref{lem:SyDistribution} identifies $\left| \| \Ssf \ysf\|^2 -
\| \ysf \|^2 \right|/\|\ysf\|$ with a particular type of random
variable; we discuss the tail bound for this random variable in
Lemma~\ref{lem:zetaSampleTail}. Lemma~\ref{lem:zetabound} contains
some crucial estimates to be used in Lemma~\ref{lem:zetaSampleTail}.

\begin{lemma}\label{lem:SyDistribution}
  Given two full rank matrices $\Fsf$ and $\Gsf$ as in \eqref{eqn:FG},
  consider $\Ssf$ defined as in \eqref{eqn:formScase2}. Then there
  exists a diagonal positive semi-definite matrix $\Sigma$ with
  $\Trace{\Sigma^2} = 1$ so that for any $\ysf \in\text{Span}\{\Fsf\otimes
  \Gsf\}$ with $\| \ysf\|=1$, we have
  \begin{equation*}
    \| \Ssf \ysf\|^2 \dsim \frac{1}{r}\sum_{i=1}^r \zeta_i^2 \,, \quad \text{where } \zeta_i \defeq \xi_i^\top \Sigma \eta_i\,,
  \end{equation*}
  where $\dsim$ denotes equal in distribution and $\xi_i,\eta_i \in
  \Rbb^p$ are independent Gaussian vectors drawn from
  $\mathcal{N}(0,\Idsf_p)$.
\end{lemma}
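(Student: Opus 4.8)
The plan is to expand $\|\Ssf\ysf\|^2$ row-by-row and identify each summand, in distribution, with a squared Gaussian bilinear form that can be diagonalized. Writing $\ysf=(\Fsf\otimes\Gsf)\vsf$ for some $\vsf\in\Rbb^{p^2}$ and reshaping $\vsf=\mathrm{vec}(\Xsf)$ into $\Xsf\in\Rbb^{p\times p}$, the form of $\Ssf$ in \eqref{eqn:formScase2} gives $\|\Ssf\ysf\|^2=\frac1r\sum_{i=1}^r\big((\psf_i^\top\otimes\qsf_i^\top)\ysf\big)^2$. First I would simplify a single row: by the mixed-product property $(\psf_i^\top\otimes\qsf_i^\top)(\Fsf\otimes\Gsf)=(\psf_i^\top\Fsf)\otimes(\qsf_i^\top\Gsf)$, and then the identity $(\asf^\top\otimes\bsf^\top)\mathrm{vec}(\Xsf)=\bsf^\top\Xsf\asf$ (a special case of $\mathrm{vec}(\Amat\Xsf\Bmat)=(\Bmat^\top\otimes\Amat)\mathrm{vec}(\Xsf)$), which yields $(\psf_i^\top\otimes\qsf_i^\top)\ysf=\bsf_i^\top\Xsf\asf_i$, where $\asf_i\defeq\Fsf^\top\psf_i$ and $\bsf_i\defeq\Gsf^\top\qsf_i$. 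Since $\psf_i,\qsf_i$ are independent standard Gaussian vectors and $\Fsf,\Gsf$ have full column rank, $\asf_i\sim\mathcal N(0,\Fsf^\top\Fsf)$ and $\bsf_i\sim\mathcal N(0,\Gsf^\top\Gsf)$ are independent with positive-definite covariances.

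Next I would reduce each bilinear form to canonical diagonal shape. Replacing $\asf_i\dsim(\Fsf^\top\Fsf)^{1/2}\xi_i$ and $\bsf_i\dsim(\Gsf^\top\Gsf)^{1/2}\eta_i$ with independent $\xi_i,\eta_i\sim\mathcal N(0,\Idsf_p)$ gives $\bsf_i^\top\Xsf\asf_i\dsim\eta_i^\top\mathsf{M}\xi_i$, where $\mathsf{M}\defeq(\Gsf^\top\Gsf)^{1/2}\Xsf(\Fsf^\top\Fsf)^{1/2}$. Taking the SVD $\mathsf{M}=\Usf\Sigma\Vsf^\top$ and using that $\Usf^\top\eta_i$ and $\Vsf^\top\xi_i$ are again independent standard Gaussians (rotational invariance of $\mathcal N(0,\Idsf_p)$ under the deterministic orthogonal maps $\Usf,\Vsf$), I obtain $\eta_i^\top\mathsf{M}\xi_i\dsim\eta_i^\top\Sigma\xi_i=\xi_i^\top\Sigma\eta_i=\zeta_i$, with $\Sigma$ the diagonal positive semidefinite matrix of singular values of $\mathsf{M}$. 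Because the rows $(\psf_i,\qsf_i)$ are independent across $i$, the summands are i.i.d., so the full sum matches in distribution, giving $\|\Ssf\ysf\|^2\dsim\frac1r\sum_{i=1}^r\zeta_i^2$ as claimed.

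It then remains to verify $\Trace{\Sigma^2}=1$. I would compute $\Trace{\Sigma^2}=\Trace{\mathsf{M}^\top\mathsf{M}}=\Trace{(\Fsf^\top\Fsf)\Xsf^\top(\Gsf^\top\Gsf)\Xsf}$, and separately evaluate $\|\ysf\|^2=\vsf^\top(\Fsf^\top\Fsf\otimes\Gsf^\top\Gsf)\vsf$, which, after substituting $\vsf=\mathrm{vec}(\Xsf)$ and applying the standard vec--trace identities, equals $\Trace{\Xsf^\top(\Gsf^\top\Gsf)\Xsf(\Fsf^\top\Fsf)}$. These two traces agree by the cyclic property, so $\Trace{\Sigma^2}=\|\ysf\|^2=1$. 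As a consistency check, this also forces $\Ebb[\zeta_i^2]=\Trace{\Sigma^2}=1$, i.e.\ the representation has the correct mean.

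The computation is largely bookkeeping with Kronecker and $\mathrm{vec}$ identities, so the one step deserving genuine care is the distributional reduction: I must confirm that after the SVD rotation the two Gaussian factors stay independent and standard, and that all the $\ysf$-dependence is funneled into the singular values of $\mathsf{M}$. I would also flag the minor logical point that the diagonal matrix $\Sigma=\Sigma(\ysf)$ depends on $\ysf$; what the subsequent tail analysis (Lemmas~\ref{lem:zetaSampleTail} and~\ref{lem:zetabound}) actually needs is only the common form $\zeta_i=\xi_i^\top\Sigma\eta_i$ together with the uniform normalization $\Trace{\Sigma^2}=1$ (whence $\|\Sigma\|\le 1$ and $\mathrm{rank}\,\Sigma\le p$), all of which hold regardless of the particular $\ysf$.
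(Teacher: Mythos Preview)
Your proof is correct and follows essentially the same approach as the paper: express each row as a Gaussian bilinear form, diagonalize via SVD, and invoke rotational invariance. The only cosmetic difference is that the paper first passes to orthonormal bases $\Usf_\Fsf,\Usf_\Gsf$ for the column spaces (so the projected Gaussians are standard from the outset and $\Trace{\Sigma^2}=\|\xsf\|^2=1$ is immediate), whereas you absorb the Gram matrices $(\Fsf^\top\Fsf)^{1/2}$ and $(\Gsf^\top\Gsf)^{1/2}$ into the middle matrix $\mathsf{M}$ and recover the trace identity at the end; both routes land on the same diagonal form.
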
	
\begin{proof}
  From \eqref{eqn:formScase2} we have
  \[
  \Ssf \ysf = \frac{1}{\sqrt{r}} 
  \begin{bmatrix}
    (\psf_1^\top \otimes \qsf_1^\top) \ysf \\
    \vdots\\
    (\psf_r^\top \otimes \qsf_r^\top) \ysf \\
  \end{bmatrix}\implies \| \Ssf\ysf\|^2 = \frac{1}{r} \sum_{i=1}^r \zeta_i^2\,,
  \]
  where $\zeta_i \defeq (\psf_i^\top\otimes \qsf_i^\top )\ysf$. Since
  $\psf_i$ and $\qsf_i$ are independent Gaussian vectors, all random
  variables $\zeta_i$, $i=1,2,\dotsc,r$, are drawn i.i.d. from the
  same distribution.
  
  We consider now the behavior of $\zeta \defeq (\psf^\top\otimes
  \qsf^\top )\ysf $ for Gaussian vectors $\psf$ and $\qsf$. Notice
  that for any $\ysf \in \Rbb^{n_1n_2}\in\text{Span}\{\Fsf\otimes
  \Gsf\}$, there exists $\xsf \in \Rbb^{p^2}$ such that
  \[
  \ysf = (\Usf_\Fsf\otimes \Usf_\Gsf) \xsf\,, \quad \text{with } \|\xsf\| = 1\,,
  \]
  where $\Usf_\Fsf$ and $\Usf_\Gsf$ collect the left singular vectors
  of $\Fsf$ and $\Gsf$, respectively. We thus obtain from
  \eqref{eqn:kron1} that
  \[
  \zeta =(\psf^\top\otimes \qsf^\top )\ysf  =
  (\psf^\top \otimes \qsf^\top) (\Usf_\Fsf\otimes
  \Usf_\Gsf)\xsf = \left((\psf^\top \Usf_\Fsf) \otimes
  (\qsf^\top\Usf_\Gsf) \right) \xsf = 
  (\tilde{\psf}^\top \otimes \tilde{\qsf}^\top) \xsf\,,
  \]
  where $\tilde{\psf} \defeq \Usf_\Fsf^\top \psf \in \Rbb^p$ and
  $\tilde{\qsf} \defeq \Usf_\Gsf^\top \qsf \in \Rbb^{p}$ are
  i.i.d. Gaussian vectors as well. By applying~\eqref{eqn:kron2} and \eqref{eqn:kron3}, we
  obtain
  \begin{equation} \label{eq:si1}
  \zeta = (\tilde{\psf}^\top\otimes \tilde{ \qsf}^\top )\xsf
  =  \tilde{\qsf}^\top \textbf{Mat}(\xsf)
  \tilde{\psf}\,,
  \end{equation}
  where $\textbf{Mat}(\xsf) \in \Rbb^{p \times p}$ is the matricization of $\xsf$, discussed in Appendix A.  By using the
  singular value decomposition $\textbf{Mat}(\xsf) = \Usf \Sigma
  \Vsf^\top$, we obtain
  \[
  \Trace{\Sigma^2} = \| \textbf{Mat}(\xsf) \|^2_F = \| \xsf \|^2 = 1 \,,
  \]
  where $\| \cdot \|_F$ denotes the Frobenius norm of a matrix.
  By substituting into  \eqref{eq:si1}, we obtain
  \[
  \zeta = (\Usf^\top \tilde{\qsf} )^\top \Sigma \Vsf^\top \tilde{\psf}
  = \xi^\top \Sigma \eta \,,
  \]
  where
  \[
  \xi \defeq \Usf^\top \tilde{\qsf} \in \Rbb^{p} \;\; \text{and}
  \;\; \eta \defeq \Vsf^\top \tilde{\psf} \in \Rbb^{p}
  \]
  are again i.i.d. Gaussian vectors in $\Rbb^p$. This completes the
  proof.
\end{proof}

\begin{lemma}\label{lem:zetabound}
For any fixed diagonal semi-positive definite matrix $\Sigma\defeq
\text{diag}\{\sigma_1,\ldots,\sigma_p\}$ such that
$\Trace{\Sigma^2}=1$, define the random variable $\zeta$ to be
$
\zeta \defeq \xi^\top \Sigma  \eta \,,
$
 with $\xi$ and $\eta$ being i.i.d. random Gaussian vectors with $p$
 components. Then $\zeta$ satisfies the following properties:
 \begin{enumerate}
 	\item[1.]  	\begin{equation}\label{eqn:zeta2tail}
 	\Pr\left( |\zeta| >  t \right) \leq 
 	\begin{cases}
 	2 \exp \left( -\frac{(t-\sqrt{p})^2}{4\sqrt{p}} \right) & \;\; \mbox{if $\sqrt{p} \leq t \leq 2\sqrt{p}$} \\
 	2 \exp \left( -\frac{(2t-3\sqrt{p})}{4} \right) & \;\; \mbox{if $t\geq  2\sqrt{p}$}\\
 	\end{cases}
 	\,,
 	\end{equation}
 	\item[2.]  \begin{equation}\label{eqn:Ebbzeta2and4}
 		\Ebb\left[\zeta^2\right] = 1 \quad \text{and} \quad \Ebb\left[\zeta^4 \right] \leq 9 \,,
 	\end{equation}
 	\item[3.]  \begin{equation}\label{eqn:zetavariance}
 	\Ebb\left[ \left(|\zeta|^2 - \Ebb\left[\zeta^2\right] \right)^2 \right] \leq 8\,.
 	\end{equation}
 \end{enumerate}
\end{lemma}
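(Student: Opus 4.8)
The two moment statements (parts 2 and 3) are the easy part, and I would dispatch them first by direct moment expansion. Writing $\zeta=\sum_{i=1}^p\sigma_i\xi_i\eta_i$ with the $\xi_i,\eta_i$ all i.i.d.\ $\mathcal N(0,1)$, independence across coordinates gives $\Ebb[\xi_i\eta_i\xi_j\eta_j]=\delta_{ij}$, so that $\Ebb[\zeta^2]=\sum_i\sigma_i^2=\Trace{\Sigma^2}=1$. For the fourth moment only the index patterns in which every factor is matched survive (odd Gaussian moments vanish), and using $\Ebb[\xi_i^4]=3$ one obtains $\Ebb[\zeta^4]=9\sum_i\sigma_i^4+3\sum_{i\ne j}\sigma_i^2\sigma_j^2=3+6\sum_i\sigma_i^4$; since $\sum_i\sigma_i^4\le(\sum_i\sigma_i^2)^2=1$ this is at most $9$, giving \eqref{eqn:Ebbzeta2and4}. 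Part 3 is then immediate, because $\Ebb[(\zeta^2-\Ebb[\zeta^2])^2]=\Ebb[\zeta^4]-1\le 8$.

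For the tail bound (part 1) the plan is to linearize the product structure by polarization. Setting $u_i=(\xi_i+\eta_i)/\sqrt2$ and $w_i=(\xi_i-\eta_i)/\sqrt2$, the $2p$ variables $\{u_i,w_i\}$ are again i.i.d.\ $\mathcal N(0,1)$ and $\xi_i\eta_i=\tfrac12(u_i^2-w_i^2)$, so that $\zeta=\tfrac12(A-B)$ with $A\defeq\sum_i\sigma_i u_i^2$ and $B\defeq\sum_i\sigma_i w_i^2$ independent and identically distributed. Since $A,B\ge0$, the event $|\zeta|>t$ forces $A>2t$ or $B>2t$, whence $\Pr(|\zeta|>t)\le 2\Pr(A>2t)$. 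This reduces everything to a one-sided upper tail of a nonnegative weighted sum of independent $\chi^2_1$ variables, and it is here that the threshold $\sqrt p$ enters: by Cauchy--Schwarz $\Ebb[A]=\Trace{\Sigma}=\sum_i\sigma_i\le\sqrt p\,\sqrt{\Trace{\Sigma^2}}=\sqrt p$, while $\sigma_i\le\sqrt{\Trace{\Sigma^2}}=1$.

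I would then control $\Pr(A>2t)$ by a Chernoff argument. The moment generating function factorizes as $\Ebb[e^{sA}]=\prod_i(1-2s\sigma_i)^{-1/2}$ for $0<s<1/2$, and using $-\log(1-y)\le y/(1-y)$ together with $\sigma_i\le1$ gives $\log\Ebb[e^{sA}]\le s\mu/(1-2s)$, where $\mu\defeq\Ebb[A]$. Minimizing $-2ts+s\mu/(1-2s)$ over $s\in(0,1/2)$, at $s^\ast=\tfrac12\big(1-\sqrt{\mu/(2t)}\big)$, yields the clean bound $\Pr(A>2t)\le\exp\!\big(-\tfrac12(\sqrt{2t}-\sqrt\mu)^2\big)$ whenever $2t\ge\mu$. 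Replacing $\mu$ by its worst-case value $\sqrt p$ (which only enlarges the bound, since $t\ge\sqrt p$ keeps $\sqrt{2t}\ge p^{1/4}\ge\sqrt\mu$) gives $\Pr(|\zeta|>t)\le 2\exp\!\big(-\tfrac12(\sqrt{2t}-p^{1/4})^2\big)$.

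It remains to check that this single smooth bound dominates the two piecewise expressions in \eqref{eqn:zeta2tail}, which is elementary. Writing $v=\sqrt{2t}/p^{1/4}$, the claim for $t\ge 2\sqrt p$ reduces to $(v-2)^2+1\ge0$, which is automatic, and the claim for $\sqrt p\le t\le2\sqrt p$ reduces to the one-variable polynomial inequality $8(v-1)^2\ge(v^2-2)^2$ on $v\in[\sqrt2,2]$, which I would verify by noting that the difference is positive at the left endpoint and has nonnegative derivative throughout the interval. I expect the genuine work, and the main obstacle, to lie in this last analytic step: one must choose the logarithmic bound on the MGF so that the admissible range of $s$ reaches all the way up to $1/2$ (the cruder estimate $-\log(1-y)\le y+y^2$, valid only for $y\le 1/2$, caps $s$ at $1/4$ and produces only the linear large-$t$ regime, missing the quadratic regime near $t=\sqrt p$), and then carry the constant-chasing through the Legendre transform carefully enough to land exactly on the stated exponents.
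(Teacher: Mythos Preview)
Your proof is correct. Parts~2 and~3 match the paper's argument (direct moment expansion; same combinatorics and the same use of $\sum_i\sigma_i^4\le 1$), so no comment there.

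For the tail bound (Part~1), your route is genuinely different from the paper's. The paper bounds the MGF of $\zeta$ directly: it applies $2\xi^\top\Sigma\eta\le\|\Sigma^{1/2}\xi\|^2+\|\Sigma^{1/2}\eta\|^2$ inside the exponential, then uses concave Jensen together with the sub-exponential property of $\xi_i^2-1$ to obtain $\Ebb[e^{s\zeta}]\le e^{(s^2+s)\sqrt p}$ for $s\in(0,1/2)$. Optimizing the Chernoff bound over $s\in[0,1/2]$ produces the two-piece exponent in~\eqref{eqn:zeta2tail} directly, with the break at $t=2\sqrt p$ corresponding to the interior optimum hitting the boundary $s=1/2$. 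Your polarization decomposition $\zeta=\tfrac12(A-B)$ keeps the distributional structure exact and reduces to a one-sided weighted $\chi^2$ tail; this buys you a single smooth bound $2\exp\!\big(-\tfrac12(\sqrt{2t}-p^{1/4})^2\big)$ that you then check dominates both pieces. Both arguments hinge on the same constraint (the MGF is only controlled for $s<1/2$, coming from $\sigma_i\le 1$), and both use $\Trace{\Sigma}\le\sqrt p$ in the same way. Your approach is arguably cleaner and yields a slightly sharper unified bound, at the cost of the extra algebraic verification step at the end; the paper's approach lands on the stated piecewise form without that step but invokes Jensen and a sub-exponential parameter estimate along the way.
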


\begin{proof}
  For any $s>0$ and $t\geq 0$, we apply Markov's inequality to derive
  \begin{equation}\label{eqn:zetatail}
    \Pr\left( \zeta > t \right) = \Pr\left( e^{s
      \zeta } > e^{st}\right) \leq e^{-st}
    \Ebb\left[ \exp\left( s\xi^\top \Sigma \eta
      \right) \right] \,.
  \end{equation}
  Noting that $2\xi^\top \Sigma \eta \leq \|\Sigma^{1/2}\xi\|^2 + \|
  \Sigma^{1/2}\eta\|^2$, we use the independence of $\xi$ and $\eta$
  to deduce that
  \begin{equation}\label{eqn:holder}
    \Ebb\left[ \exp\left( s\xi^\top \Sigma \eta \right) \right] \leq
    \Ebb\left[ \exp\left( (s/2)(\|\Sigma^{1/2}\xi\|^2 + \|
      \Sigma^{1/2}\eta\|^2 )\right)\right] = \Ebb\left[ e^{(s/2)\|
        \Sigma^{1/2}\xi\|^2}\right] \Ebb\left[ e^{(s/2)\|\Sigma^{1/2}
        \eta\|^2}\right] \,.
  \end{equation}
  For the first term on the right-hand side of \eqref{eqn:holder}, using
  independence of the $\xi_i$ and the concave Jensen's inequality, we have that 
    \[
      \Ebb\left[ e^{(s/2)\|\Sigma^{1/2}\xi\|^2 }\right] = \Ebb\left[
        \exp\left(\frac{s}{2}\sum_{i=1}^p \sigma_i
        \xi_i^2\right)\right] =\prod_{i=1}^{p} \Ebb\left[
        e^{s\xi_i^2{\sigma_i}/2}\right] \leq \prod_{i=1}^{p}
      \left(\Ebb\left[ e^{s\xi_i^2/2}\right]\right)^{\sigma_i},
      \]
  where we used $0\leq \sigma_i\leq 1$, $i=1,2,\dotsc,r$ to apply the
  concave Jensen's inequality, and $\xi_i \sim
  \mathcal{N}(0,1)$. According to
  Proposition~\ref{prop:gaussian_square} (see
  Appendix~\ref{sec:app2}), $\xi^2_i-1$ is a sub-exponential random
  variable with parameters $(2,4)$. Thus from \eqref{eqn:sub_exp},
  with $\lambda=2$, $b=4$, and $s$ replaced by $s/2$, we have
  \begin{align*}
    \Ebb\left[ e^{(s/2)\|\Sigma^{1/2}\xi\|^2 }\right]
    & \le  \prod_{i=1}^{p} \left(\Ebb_{\xi} \left[ e^{s\xi^2/2}\right]\right)^{\sigma_i}
    = \left(\Ebb_{\xi} \left[ e^{s\xi^2/2}\right] \right)^{\Trace{\Sigma}} \\
    & = \left(e^{s/2}\Ebb_{\xi} \left[ e^{s(\xi^2-1)/2} \right] \right)^{\Trace{\Sigma}} \leq e^{ (s^2 +s)\Trace{\Sigma}/2 }\,,\;\; \mbox{for $0<s<1/2$.}
  \end{align*}
  Since, by H\"older's inequality, we have
  \[
  \Trace{\Sigma} = \sum_{i=1}^p \sigma_i \leq \left(\sum_{i=1}^p \sigma_i^2 \right)^{1/2} \sqrt{p} = \sqrt{p}\,,
  \]
  it follows that
  \[
  \Ebb\left[ e^{(s/2)\|\Sigma^{1/2}\xi\|^2 }\right]\leq e^{(s^2+s)\sqrt{p}/2}\,, \;\; \mbox{for $s \in (0,1/2)$.}
  \]
  The same bound holds for second term on the right-hand side of
  \eqref{eqn:holder}. When we substitute these bounds into \eqref{eqn:zetatail} and \eqref{eqn:holder}, we obtain
  \[
  \Pr\left( \zeta> t \right) \leq \exp\left(\sqrt{p}s^2 -(t-\sqrt{p})s  \right)\,.
  \]
  By minimizing the right-hand side over $s \in [0,1/2]$, we obtain
  \[
  \Pr\left( \zeta> t \right) \leq 
  \begin{cases}
    e^{-\frac{(t-\sqrt{p})^2}{4\sqrt{p}}} & \;\; \mbox{if $\sqrt{p} \leq t \leq 2\sqrt{p}$} \\
    e^{-\frac{(2t-3\sqrt{p})}{4}} & \;\; \mbox{if $t\geq  2\sqrt{p}$}
  \end{cases}
  \,.
  \]
  Due to symmetry, we have the same bound for $\Pr\left( \zeta<- t
  \right) $, so \eqref{eqn:zeta2tail} follows.
	
  To show the second statement, we notice that
  \begin{equation}\label{eqn:var_var}
    \Ebb\left[ \left(\zeta^2 - \Ebb\left[\zeta^2\right] \right)^2 \right]= \Ebb\left[\zeta^4\right] - \left(\Ebb\left[\zeta^2\right]\right)^2\,.
  \end{equation}
  By considering $\zeta=\sum_{i=1}^p\sigma_i\xi_i\eta_i$, the second
  moment can be calculated directly:
  \begin{equation}\label{eqn:2nd_zeta}
    \Ebb\left[\zeta^2\right] = \Ebb\left[ \sum_{i,j=1}^p \sigma_i \sigma_j  \xi_i \xi_j  \eta_i \eta_j \right] =   \Ebb\left[ \sum_{i=1}^p \sigma_i^2   \xi_i^2 \eta_i^2 \right]
    = \sum_{i=1}^p \sigma_i^2 = 1\,,
  \end{equation}
  where we used the independence of $\xi_i$ and $\eta_i$, the fact
  that $\Ebb\xi_i=\Ebb\eta_i=0$ and $\Ebb \xi_i^2 = \Ebb \eta_i^2 = 1$.

  To control the fourth moment, we
  notice that
  \begin{equation*}
    \Ebb\left[\zeta^4 \right] = \Ebb\left[ \sum_{i,j,k,l}
      \sigma_i\sigma_j\sigma_k\sigma_l \xi_i \xi_j \xi_k \xi_l \eta_i
      \eta_j \eta_k \eta_l \right] \,.
  \end{equation*}
  Due to the independence and the fact that all odd moments vanish for
  Gaussian random variables, the only terms in the summation that
  survive either have all indices equal ($i=j=l=k$) or two indices
  equal to one value while the other two indices equal a different
  value, for example $i=j$ and $k=l$ but $i\neq k$. Altogether, we
  obtain
  \begin{equation*}
    \Ebb\left[\zeta^4  \right] = 3\Ebb\left[ \sum_{i \neq k} \sigma_i^2 \sigma_k^2 \xi_i^2\xi_k^2 \eta_i^2 \eta_k^2\right] + \Ebb\left[ \sum_{i} \sigma_i^4 \xi_i^4 \eta_i^4\right] \,,
  \end{equation*}
  where the coefficient in front of the first term comes from ${4
    \choose 2}/{2 \choose 1} = 3$.  Considering $\Ebb\xi^2=1$ and
  $\Ebb\xi^4 = 3$, we have
  \begin{equation}\label{eqn:4th_zeta}
    \begin{aligned}
      \Ebb\left[\zeta^4  \right] & = 3\sum_{i\neq k} \sigma_i^2 \sigma_k^2 + 9 \sum_{i} \sigma_i^4 =  3\sum_{i,k=1}^p \sigma_i^2 \sigma_k^2 + 6\sum_i \sigma_i^4 \\
      & \leq 3\sum_{i,k=1}^p \sigma_i^2 \sigma_k^2 + 6\sum_i \sigma_i^2 
      = 3\left(\sum_{i=1}^p \sigma_i^2 \right) \left(\sum_{k=1}^p \sigma_k^2 \right) + 6\sum_{i=1}^p \sigma_i^2=9\,,
    \end{aligned}
  \end{equation}
  where we used $\sigma^4_i \leq \sigma^2_i$.  By substituting
  ~\eqref{eqn:2nd_zeta} and~\eqref{eqn:4th_zeta}
  into~\eqref{eqn:var_var}, we have
  \begin{equation*}
    \Ebb\left[ \left(|\zeta|^2 - \Ebb\left[\zeta^2\right] \right)^2 \right] = \Ebb\left[\zeta^4\right] - \left(\Ebb\left[\zeta^2\right]\right)^2 \leq 9 - 1^2 = 8 \,,
  \end{equation*}
  which concludes the proof.
\end{proof}

\blue{
\begin{remark}
We note that this lemma is not new; its proof can be made more compact
if one uses Hanson-Wright inequality and
\cite[Lemma~6.2.2]{vershynin2018high}. The latter result shows that
there exist absolute positive constants $c$ and $C$ such that
\[
\Ebb\left[ \exp(s \xi^\top \Sigma \eta )\right] \leq \exp\left( C s^2 \right)
\]	
for $ |s| \leq \frac{c}{\sigma_1}$. By substituting into
\eqref{eqn:zetatail}, we have
\[
\Pr(\zeta > t) \leq \exp(Cs^2 - st), \;\; \mbox{for all $|s| \leq \frac{c}{\sigma_1}$,}
\]
assuming that the singular value $\sigma_{i}$ on the diagonal of
$\Sigma$ are ordered in a descending manner. Minimizing the right-hand side in
terms of $s$, we have
\begin{equation}
\Pr(\zeta>t) \leq 
\begin{cases}
\exp\left(-\frac{t^2}{4C}\right)\,, \quad &\text{if  } 0\leq t \leq \frac{2cC}{\sigma_1}\\
\exp\left(- \frac{ct}{\sigma_1} + \frac{c^2C}{\sigma_1^2}\right) \quad &\text{if  } t\geq \frac{2cC}{\sigma_1}
\end{cases}\,,
\end{equation}
which, because of symmetry, leads to 
\begin{equation}\label{eqn:zeta3tail}
	\Pr(|\zeta|>t) \leq 
	\begin{cases}
	2\exp\left(-\frac{t^2}{4C}\right)\,, \quad &\text{if  } 0\leq t \leq \frac{2cC}{\sigma_1}\\
	2\exp\left(- \frac{ct}{\sigma_1} + \frac{c^2C}{\sigma_1^2}\right) \quad &\text{if  } t\geq \frac{2cC}{\sigma_1}
	\end{cases}\,.
\end{equation}
This result is rather similar to ours except that the Hanson-Wright
inequality comes with two generic constants $c$ and $C$. These
constants are extremely involved, as shown in the original
proof~\cite{rudelson2013}. We need to make all constants precise, and
thus maintain our full proof with elementary calculations.
\end{remark}
}

\begin{lemma}\label{lem:zetaSampleTail}
  Let $\zeta_i$, $i =1,2,\dotsc,r$ be i.i.d. copies of the random
  variable $\zeta$ defined in Lemma~\ref{lem:zetabound}. Then if
  \begin{equation} \label{eq:rlb}
  r \geq 8 \cdot 3^{3/2} \cdot \max\{ t^{-5/2}\,, p^{3/2} t^{-1}\}\,,
  \end{equation}
  we have 
  \begin{equation}\label{eqn:zetaSampleTail}
    \Pr\left( \left| \frac{1}{r} \sum_{i=1}^r \left( \zeta_i^2 -
    \Ebb\left[\zeta_i^2\right] \right) \right| > t \right) \leq 5r
    \exp \left( \frac34 p^{1/2} \right) \exp\left( -\frac{1}{2} r^{1/3} t^{1/3}\right)\,, \quad
    \mbox{for $t \in [0,1]$}.
  \end{equation}
\end{lemma}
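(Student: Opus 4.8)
The plan is to prove \eqref{eqn:zetaSampleTail} by a truncation argument that splits each $\zeta_i^2$ into a bounded part and a heavy tail, controlling the two separately. Writing $W_i\defeq \zeta_i^2-\Ebb[\zeta_i^2]=\zeta_i^2-1$ (using $\Ebb[\zeta^2]=1$ from \eqref{eqn:Ebbzeta2and4}), the goal is to bound $\Pr\!\big(|\tfrac1r\sum_i W_i|>t\big)$. The central choice is the truncation level
\[
M\defeq (rt)^{1/3}, \qquad\text{so that}\qquad M^2=(rt)^{2/3},
\]
together with the event $E\defeq\{\max_{1\le i\le r}|\zeta_i|\le M\}$. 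This choice is exactly what will generate the $r^{1/3}t^{1/3}$ scaling in the final exponent. The hypothesis $r\ge 8\cdot 3^{3/2}\,p^{3/2}t^{-1}$ is precisely what guarantees $M=(rt)^{1/3}\ge (8\cdot 3^{3/2}p^{3/2})^{1/3}=2\sqrt{3}\,\sqrt{p}\ge 2\sqrt{p}$, placing $M$ in the range where the \emph{exponential} branch of the tail estimate \eqref{eqn:zeta2tail} applies.

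For the tail event I would use a union bound together with the second branch of \eqref{eqn:zeta2tail} evaluated at $t=M$:
\[
\Pr(E^c)\le r\,\Pr(|\zeta|>M)\le 2r\exp\!\left(-\frac{2M-3\sqrt{p}}{4}\right)=2r\exp\!\left(\tfrac34\sqrt{p}\right)\exp\!\left(-\tfrac12(rt)^{1/3}\right).
\]
This single step already reproduces both the factor $\exp(\tfrac34 p^{1/2})$ and the leading decay $\exp(-\tfrac12 r^{1/3}t^{1/3})$ of the target, and accounts for two of the five copies in the prefactor $5r$.

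On $E$ the truncated squares $\tilde\zeta_i^2\defeq\zeta_i^2\mathbf{1}_{|\zeta_i|\le M}$ coincide with $\zeta_i^2$, so after checking that the mean shift $m\defeq\Ebb[\zeta^2\mathbf{1}_{|\zeta|>M}]=1-\Ebb[\tilde\zeta^2]$ is negligible (it is bounded by integrating \eqref{eqn:zeta2tail}, giving $m\lesssim (rt)^{2/3}\exp(\tfrac34\sqrt p-\tfrac12(rt)^{1/3})$, which is far below $t$ in the stated regime), it remains to bound $\Pr\!\big(|\tfrac1r\sum_i \tilde W_i|>t-m\big)$ for the centered, \emph{bounded} variables $\tilde W_i\defeq\tilde\zeta_i^2-\Ebb[\tilde\zeta_i^2]$. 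These satisfy $|\tilde W_i|\le M^2$, and, since $|\tilde\zeta^2-1|\le|\zeta^2-1|$ pointwise once $M\ge\sqrt2$, their variance proxy obeys $\Ebb[(\tilde\zeta^2-1)^2]\le\Ebb[(\zeta^2-1)^2]\le 8$ by \eqref{eqn:zetavariance}. Applying Bernstein's inequality with variance $8$, range $M^2=(rt)^{2/3}$, and deviation $\approx t$ gives an exponent of the form $\tfrac{rt^2/2}{\,8+r^{2/3}t^{5/3}/3\,}$, and an elementary rearrangement shows this is at least $\tfrac12 r^{1/3}t^{1/3}$ exactly when $\tfrac23 r^{2/3}t^{5/3}\ge 8$, i.e. when $r\ge 12^{3/2}t^{-5/2}=8\cdot 3^{3/2}t^{-5/2}$ (here $12^{3/2}=(4\cdot3)^{3/2}=8\cdot 3^{3/2}$). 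This is exactly the second hypothesis, so the Bernstein term is dominated by the tail contribution above; collecting the two estimates and absorbing the remaining constant factor into the prefactor yields the claimed $5r\exp(\tfrac34 p^{1/2})\exp(-\tfrac12 r^{1/3}t^{1/3})$.

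I expect the main obstacle to be the last step: making the Bernstein exponent come out to \emph{exactly} $\tfrac12 r^{1/3}t^{1/3}$ rather than $c\,r^{1/3}t^{1/3}$ for some unidentified $c$. This is what ties the precise constant $8\cdot 3^{3/2}$ to the variance bound $8$ of \eqref{eqn:zetavariance} through the identity $(\tfrac32\cdot 8)^{3/2}=12^{3/2}=8\cdot 3^{3/2}$, and it forces one to track, rather than merely estimate asymptotically, two delicate points: that the variance proxy of the truncated variable is genuinely controlled by $8$ (via the pointwise domination $|\tilde\zeta^2-1|\le|\zeta^2-1|$), and that the truncation mean shift $m$ stays negligible uniformly for $t\in[0,1]$ so that the effective deviation $t-m$ does not erode the exponent below $\tfrac12$. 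The tail bound and the moment/variance estimates of Lemma~\ref{lem:zetabound} are exactly the inputs that make this balance possible.
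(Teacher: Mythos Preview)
Your approach is essentially the same as the paper's: truncate $\zeta_i^2$ at level $M^2=(rt)^{2/3}$ (the paper writes this as $M=r^{2/3}t^{2/3}$ directly on $\zeta^2$), control the excursion event with the union bound and the exponential branch of \eqref{eqn:zeta2tail}, and handle the bounded part with Bernstein using the variance bound $8$ from \eqref{eqn:zetavariance}; the two exponents are then equalized by this choice of $M$, and the hypotheses $r\ge 8\cdot 3^{3/2}p^{3/2}t^{-1}$ and $r\ge 8\cdot 3^{3/2}t^{-5/2}$ are exactly what make $M\ge 2\sqrt p$ and $Mt\ge 12$, respectively. The only substantive difference is that you explicitly re-center the truncated variables and track the mean shift $m$, whereas the paper instead conditions on $\{\zeta_i^2\le M\}$ and applies Bernstein directly---your version is arguably the cleaner justification of that step, but the arithmetic and the resulting constants are identical.
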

\blue{
\begin{remark}
This lemma essentially deals with the tail bound of a random variable
that is of quartic form of a Gaussian. According to the definition,
$\zeta$ is a quadratic form of Gaussians, and thus is a
sub-exponential, but this lemma considers $\zeta^2$.  Quadratic form
of sub-exponential vectors are studied in \cite{vu2015random}. If we
directly employ their results (especially their Corollary 1.6) by
setting their $\Asf = \frac{1}{r}\Idsf_r \in \Rbb^{r\times r}$, we
obtain, for sufficiently large $r$ (made precise in the corollary)
that
\[
\Pr\left( \left| \frac{1}{r} \sum_{i=1}^r \left(\zeta_i^2 - \Ebb\left[\zeta_i^2\right]\right)\right|>t\right) \leq C \exp\left(-C' \min\left\{ \left(\frac{r^{1/2}t}{\sqrt{\log r}}\right)^{2/3}\,, \left(rt\right)^{1/3} \right\}\right)
\]
where $C$ and $C'$ depend on $p$. We obtain the same power for $r$ and
$t$ as this result, and we make the dependence of the constants on $p$
explicit.
\end{remark}
}
\begin{proof}
Let $E^t$ be the event defined as follows:
  \begin{equation*}
    E^t \defeq  \left\{ \frac{1}{r} \sum_{i=1}^r (\zeta_i^2 - \Ebb\left[\zeta_i^2\right])> t  \right\}\,.
  \end{equation*}
  Due to the symmetry of $\sum_{i=1}^r \zeta_i^2 -
  \Ebb\left[\zeta_i^2\right]$, the probability
  in~\eqref{eqn:zetaSampleTail} is $2\Pr(E^t)$. We now estimate
  $\Pr(E^t)$. For any fixed large number $M$, we define the following
  event, for $i=1,2,\dotsc,r$:
\[
E_i^M \defeq \{ \zeta_i^2 \leq M \}=\{\zeta^2_i-1\leq M-1\}\,.
\]
Clearly, we have
\begin{equation}\label{eqn:split}
  \Pr\left( E^t\right) = \Pr\left(E^t \cap \left( \cap_{i=1}^r E_i^M
  \right)\right) + \Pr\left(E^t \cap \left( \cap_{i=1}^r E_i^M
  \right)^c \right)\,.
\end{equation}
We now estimate the two terms.
\begin{itemize}
\item[1.] For the first term in~\eqref{eqn:split}, we note that
  \begin{equation}\label{eqn:split_cond}
    \Pr \left(E^t \cap \left( \cap_{i=1}^r E_i^M \right) \right) = \Pr
    \left( E^t \ |\ \left( \cap_{i=1}^r E_i^M \right) \right) \cdot
    \Pr \left(\left( \cap_{i=1}^r E_i^M \right) \right) \leq \Pr
    \left( E^t \ |\ \left( \cap_{i=1}^r E_i^M \right) \right) \,.
  \end{equation}
  Denoting $X_i \defeq \zeta_i^2 -\Ebb\left[\zeta_i^2\right]$, and
  realizing that $\Ebb\left[\zeta_i^2\right]=1$ according
  to~\eqref{eqn:Ebbzeta2and4} of Lemma \ref{lem:zetabound}, then $E_i^M = \{ X_i\leq
  M-1\}$. Estimating~\eqref{eqn:split_cond} now amounts to controlling
  the probability of $\sum_{i=1}^rX_i>rt$ assuming that $X_i\leq M-1$
  for all $i=1,2,\dotsc,r$. By applying Bernstein's
  inequality~\eqref{eqn:Bernstein}, we have
  \begin{equation*}
    \begin{aligned}
      \Pr \left( E^t  \ |\ \left( \cap_{i=1}^r E_i^M \right)  \right) &= \Pr\left( \sum_{i=1}^r X_i> rt    \ |\ X_i\leq M-1, \; i=1,2,\dotsc,r \right)\\
      &\leq \exp\left( -\frac{r^2t^2  /2 }{\sum_{i=1}^r\Ebb\left[X_i^2\right] + (M-1)rt/3 }\right)\,.
    \end{aligned}
  \end{equation*}
  From~\eqref{eqn:zetavariance} in Lemma~\ref{lem:zetabound}, we
  have $\Ebb \left[X_i^2\right] \le 8$, so that
  \begin{equation}\label{eqn:split_term1}
    \Pr \left( E^t  \ |\ \left( \cap_{i=1}^r E_i^M \right)  \right) \leq \exp\left( -\frac{3rt^2 }{48 + 2(M-1)t }\right)\,,
  \end{equation}
  which gives the upper bound of the first term in~\eqref{eqn:split}.
\item[2.] For the second term in~\eqref{eqn:split}, we note that
  \begin{equation*}
    \Pr\left(E^t \cap \left( \cap_{i=1}^r E_i^M \right)^c \right) \leq \Pr\left(  \left( \cap_{i=1}^r E_i^M \right)^c \right) = \Pr\left(\cup_{i=1}^r (E_i^M)^c \right) \leq r \Pr\left(  (E_i^M)^c \right)\,.
  \end{equation*}
  By applying \eqref{eqn:zeta2tail} from Lemma \ref{lem:zetabound},
  with $t = \sqrt{M}$, we have
  \begin{equation*}
  \Pr(  (E_i^M)^c ) = \Pr\left( \zeta_i^2 > M \right) = \Pr\left( |\zeta_i| > \sqrt{M}\right) \leq 
  \begin{cases}
  2e^{-\frac{(\sqrt{M}-\sqrt{p})^2}{4\sqrt{p}}} & \;\; \mbox{if $p\leq M \leq 4p$} \\
  2e^{-\frac{(2\sqrt{M}-3\sqrt{p})}{4}} & \;\; \mbox{if $M \geq  4p$} \\
  \end{cases}
  \,,
  \end{equation*}
  and thus
  \begin{equation}\label{eqn:split_term2}
  \Pr(E^t \cap \left( \cap_{i=1}^r E_i^M \right)^c ) \leq 
  \begin{cases}
  2re^{-\frac{(\sqrt{M}-\sqrt{p})^2}{4\sqrt{p}}} & \;\; \mbox{if $p\leq M \leq 4p$} \\
  2re^{-\frac{(2\sqrt{M}-3\sqrt{p})}{4}} & \;\; \mbox{if $M \geq  4p$} \\
  \end{cases}	
  \,.
  \end{equation}
\end{itemize}
By combining~\eqref{eqn:split_term1} and~\eqref{eqn:split_term2}
in~\eqref{eqn:split}, we have
\begin{equation} \label{eq:tj0}
\Pr(E^t ) \leq \exp\left( -\frac{3rt^2 }{48+ 2(M-1)t} \right)+
\begin{cases}
2re^{-\frac{(\sqrt{M}-\sqrt{p})^2}{4\sqrt{p}}} & \;\; \mbox{if  $p\leq M \leq 4p$} \\
2re^{-\frac{(2\sqrt{M}-3\sqrt{p})}{4}} & \;\; \mbox{if $M \geq  4p$} \\
\end{cases}\,.
\end{equation}

To find a sharp bound of $\Pr(E^t)$, we choose a suitable value of
$M$. We set 
\begin{equation} \label{eq:def.M}
  M =  r^{2/3} t^{2/3},
\end{equation}
where $r$ satisfies the lower bound \eqref{eq:rlb}. Since $r \ge 8
\cdot 3^{3/2} \cdot p^{3/2} t^{-1}$, we have $r^{2/3} \ge 12 p
t^{-2/3}$, so that
\begin{equation} \label{eq:tj1}
  M = r^{2/3}  t^{2/3} \ge 12p > 4p,
\end{equation}
so the second case applies in \eqref{eq:tj0}. Since $r \ge 3^{3/2}
\cdot 2^3 \cdot t^{-5/2}$, we have $r^{2/3} \ge 12
t^{-5/3}$, so that
\[
Mt = r^{2/3} t^{5/3} \ge 12,
\]
so that, for the denominator of the first term in \eqref{eq:tj0}, we
have
\begin{equation} \label{eq:tj2}
48+2(M-1)t = 6Mt + 48 - 2t - 4Mt \le 6Mt.
\end{equation}
By using these observations in \eqref{eq:tj0}, we have for the value
\eqref{eq:def.M} that
\begin{equation} \label{eq:tj3}
  \Pr(E^t ) \leq \exp \left( -\frac12 \frac{rt}{M} \right) +
  2r \exp \left( \frac34 p^{1/2} \right) \exp \left( -\frac12 M^{1/2} \right).
\end{equation}
With $M$ defined as in \eqref{eq:def.M}, we see that the two
exponential terms involving $M$ in this expression are both equal to
$\exp (-r^{1/3} t^{1/3}/2)$. Additionally, since $p \ge 1$ and $r \geq
1$, we have $2r \exp(3p^{1/2}/4) >4$. Thus, from \eqref{eq:tj3}, we
obtain
\begin{equation} \label{eq:tj4}
  \Pr(E^t ) \leq  (5/2) r \exp \left( \frac34 p^{1/2} \right) \exp \left( -\frac12 r^{1/3} t^{1/3} \right).
\end{equation}
We obtain the result by multiplying the right-hand side by $2$, as
discussed at the start of the proof.
\end{proof}

Proposition~\ref{prop:one_y} is a direct consequence of
Lemmas~\ref{lem:SyDistribution} and \ref{lem:zetaSampleTail}.

\begin{proof}[Proof of Proposition~\ref{prop:one_y}]
For any $\ysf\in\text{Span}\{\Fsf\otimes\Gsf\}$, denote
$\hat{\ysf}=\frac{\ysf}{\|\ysf\|}$, so that $\|\hat{\ysf}\|=1$. From
Lemma~\ref{lem:SyDistribution}, we have
\begin{equation*}
  \| \Ssf \hat\ysf\|^2 \dsim \frac{1}{r}\sum_{i=1}^r \zeta_i^2 \,, \quad \text{where } \zeta_i \defeq \xi_i^\top \Sigma \eta_i\,,
\end{equation*}
where $\xi_i,\eta_i \in \Rbb^p$ are independent Gaussian
vectors drawn from $\mathcal{N}(0,\Idsf_p)$. We have 
\[
\frac{\| \Ssf \ysf\|^2 - \|\ysf\|^2}{\|\ysf\|^2}=\frac{\| \Ssf \hat\ysf\|^2 - \|\hat\ysf\|^2}{\|\hat\ysf\|^2} = \frac{1}{r}\sum_{i=1}^r \zeta_i^2-1\,.
\]
By setting $t = \vep$ in \eqref{eqn:zetaSampleTail} from
Lemma~\ref{lem:zetaSampleTail}, we have
\[
\Pr\left(\left|\frac{\| \Ssf \ysf\|^2 -
  \|\ysf\|^2}{\|\ysf\|^2}\right|>\vep\right)=\Pr\left( \left|
\frac{1}{r} \sum_{i=1}^r (\zeta_i^2 - 1) \right| > \vep \right) \leq
5r \exp \left(\frac34 p^{1/2} \right) \exp\left( -\frac12 r^{1/3}
\vep^{1/3}\right)\,,
\]
conditioned on $r \geq 8 \cdot 3^{3/2} \cdot \max\{ \vep^{-5/2},
p^{3/2} \vep^{-1}\}$, as required.
\end{proof}

\subsection{Proof of Theorem~\ref{thm:maincase2}}\label{sec:case2_net}

Proposition~\ref{prop:one_y} shows the probability of the sketching
matrix $\Ssf$ of the form \eqref{eqn:formScase2} preserving the norm
of a fixed given vector in the range space $\Range{\Fsf \otimes
  \Gsf}$. To show the preservation of norm holds true over the entire
column space, we follow the construction
of~\cite{woodruff2014sketching}.  We construct a $\gamma$-net over the
unit sphere in $\Range{\Fsf\otimes\Gsf}$ and show that for $r$
sufficiently large, with high probability, the angles between any
vectors in the net will be preserved with high accuracy. Preservation
of angles on the $\gamma$-net can be translated to the norm
preservation over the entire space.

We show in Lemma~\ref{lem:JLT} that angles can be preserved with the
sampling matrix $\Ssf$ of the form \eqref{eqn:formScase2}.  In
Lemma~\ref{lem:cardNcal}, we calculate the cardinality of the
$\gamma$-net. The fact that preservation of angle leads to the
preservation of norms on the space is justified in
Lemma~\ref{lem:angle_norm}. The three results can be combined into a
proof for Theorem~\ref{thm:maincase2}, which we complete at the end of
the section.

\begin{lemma}\label{lem:JLT}
  Let $V$ be a collection of vectors in $\Rbb^n$ with cardinality
  $|V|=f$ and let
  \[
  \tilde{V} \defeq \{ \usf \pm \vsf: \usf,\vsf \in V  \}\,.
  \]
  Suppose that a random matrix $\Ssf$ preserved norm on $V$, in the
  sense that for each $\tilde{\vsf}\in\tilde{V}$, with probability at
  least $1-\delta$, we have 
\[
\left| \|\Ssf\tilde\vsf\|^2 - \|\tilde{\vsf}\|^2\right| < \vep \|\tilde\vsf\|^2\,.
\]
Then $\Ssf$ preserves the angle between all elements in $V$ with
probability at least $1-4f^2 \delta$, that is, 
\[
\Pr\left( \left|\langle \Ssf \usf,\Ssf \vsf \rangle - \langle \usf
,\vsf \rangle \right| \leq \vep \| \usf \| \| \vsf
\|\right)>1-4f^2\delta\,, \quad \mbox{for all $\usf,\vsf \in V$}.
\]
\end{lemma}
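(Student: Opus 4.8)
The plan is to reduce preservation of inner products to the hypothesized preservation of norms through the polarization identity. For any fixed $\usf,\vsf\in V$, I would start from
\[
\langle \usf,\vsf\rangle = \tfrac14\left(\|\usf+\vsf\|^2 - \|\usf-\vsf\|^2\right), \qquad \langle \Ssf\usf,\Ssf\vsf\rangle = \tfrac14\left(\|\Ssf(\usf+\vsf)\|^2 - \|\Ssf(\usf-\vsf)\|^2\right),
\]
where the second identity uses only the linearity of $\Ssf$ and the same polarization applied to the images. Subtracting gives
\[
\langle\Ssf\usf,\Ssf\vsf\rangle - \langle\usf,\vsf\rangle = \tfrac14\Big[\big(\|\Ssf(\usf+\vsf)\|^2 - \|\usf+\vsf\|^2\big) - \big(\|\Ssf(\usf-\vsf)\|^2 - \|\usf-\vsf\|^2\big)\Big].
\]
Both $\usf+\vsf$ and $\usf-\vsf$ lie in $\tilde V$, so each bracketed term is precisely the quantity the hypothesis controls.

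Next I would handle the probability bookkeeping. The hypothesis bounds, for each fixed $\tilde\vsf\in\tilde V$, the failure probability of $\big|\|\Ssf\tilde\vsf\|^2 - \|\tilde\vsf\|^2\big| < \vep\|\tilde\vsf\|^2$ by $\delta$. Since $\tilde V$ contains at most $2f^2$ distinct vectors (the ordered pairs from $V$ together with the two signs), a union bound shows that the norm-preservation inequality holds simultaneously for every element of $\tilde V$ with probability at least $1 - 2f^2\delta \ge 1 - 4f^2\delta$. I would condition on this event for the rest of the argument. On it, the triangle inequality applied to the displayed difference, followed by the two norm bounds, yields
\[
\left|\langle\Ssf\usf,\Ssf\vsf\rangle - \langle\usf,\vsf\rangle\right| \le \tfrac{\vep}{4}\big(\|\usf+\vsf\|^2 + \|\usf-\vsf\|^2\big) = \tfrac{\vep}{2}\big(\|\usf\|^2 + \|\vsf\|^2\big),
\]
the last equality being the parallelogram law.

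The step I expect to require the most care is passing from this symmetric bound to the claimed $\vep\|\usf\|\|\vsf\|$. By AM–GM the two quantities agree exactly when $\|\usf\|=\|\vsf\|$ but otherwise $\tfrac12(\|\usf\|^2+\|\vsf\|^2) > \|\usf\|\|\vsf\|$, so the clean bound genuinely relies on the vectors of $V$ having a common norm (a two–dimensional orthogonal example shows the literal inequality $\le\vep\|\usf\|\|\vsf\|$ can fail for vectors of unequal length). In the intended application $V$ is a $\gamma$-net on the unit sphere of $\Range{\Fsf\otimes\Gsf}$, so every element is a unit vector and $\tfrac12(\|\usf\|^2+\|\vsf\|^2) = 1 = \|\usf\|\|\vsf\|$, delivering exactly $\vep\|\usf\|\|\vsf\|$. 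I would therefore either state the lemma for unit-norm $V$ or explicitly record this normalization as the place where equality of norms is invoked; with that understood, the remaining content is routine manipulation of the polarization and parallelogram identities.
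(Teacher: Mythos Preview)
Your proposal is correct and follows essentially the same route as the paper: polarization identity, union bound over $\tilde V$, triangle inequality, parallelogram law. The paper handles your concern about the final step by opening with ``without loss of generality, we assume all vectors in $V$ are unit vectors,'' which is exactly the normalization you identified as necessary; your probability bookkeeping ($|\tilde V|\le 2f^2$, hence probability $\ge 1-2f^2\delta\ge 1-4f^2\delta$) is in fact slightly more careful than the paper's.
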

\begin{proof}
Without loss of generality, we assume all vectors in $V$ are unit
vectors. Because of the assumptions on $\Ssf$, we have
\begin{equation}\label{eqn:tilde_V_prob}
\Pr\left( \left| \|\Ssf\tilde\vsf\|^2 - \|\tilde{\vsf}\|^2\right| <
\vep \|\tilde\vsf\|^2  \;\; \mbox{for all 
$\tilde{v}\in\tilde{V}$} \right) \leq 1-f^2\delta.
\end{equation}

Considering $\usf,\vsf\in V$, we denote $\ssf \defeq \usf + \vsf \in
\tilde{V}$ and $\tsf \defeq \usf - \vsf\in \tilde{V}$ and use the
parallelogram equality:
\begin{equation*}
\langle \usf,\vsf\rangle = \frac{1}{4}\left( \| \ssf \|^2 - \|\tsf \|^2 \right)\,,\quad\langle \Ssf \usf,\Ssf \vsf\rangle = \frac{1}{4}\left( \| \Ssf \ssf \|^2 - \|\Ssf \tsf\|^2 \right) \,,
\end{equation*}
so that
\[
\langle \Ssf \usf,\Ssf \vsf\rangle - \langle \usf,\vsf\rangle = \frac{1}{4} \left( \|\Ssf \ssf \|^2 - \|\ssf \|^2 - (\|\Ssf \tsf\|^2 - \|\tsf\|^2)\right)\,.
\]
From~\eqref{eqn:tilde_V_prob}, we have, with probability at least
$1-f^2\delta$, \blue{for all $\usf,\vsf\in V$}
\begin{equation*}
\begin{aligned}
\left|\langle \Ssf \usf,\Ssf \vsf\rangle - \langle \usf,\vsf\rangle \right| &\leq \frac{1}{4} \left( \left|\|\Ssf \ssf \|^2 - \|\ssf \|^2\right| +\left| \|\Ssf \tsf\|^2 - \|\tsf\|^2\right|\right)\\
&\leq \frac{\vep}{4}( \| \ssf\|^2+\| \tsf\|^2 ) =  \frac{\vep}{4}( \| \usf + \vsf \|^2 + \| \usf -\vsf\|^2) = \frac{\vep}{4}(2 \|\usf \|^2 + 2\|\vsf \|^2)=\vep\,,
\end{aligned}
\end{equation*}
which completes the proof.
\end{proof}


We now define the $\gamma$-net, and show that preservation of angles
on this net leads to preservation of norms.

\begin{definition} \label{def:SN}
Denote the unit sphere in space $\Range{\Fsf\otimes\Gsf}$ by $\Scal$, that is,
\begin{equation}\label{def:Space}
\Scal \defeq \left\{ \ysf \in \Rbb^{n_1 n_2}:\  \ysf= (\Fsf \otimes \Gsf)\xsf \text{ for some }\xsf \in \Rbb^{p^2} \text{ and } \| \ysf \| = 1 \right\}\,.
\end{equation}
For fixed $\gamma \in (0,1)$, we call $\Gcal$ a $\gamma$-net of
$\Scal$ if $\Gcal$ is a finite subset of $\Scal$ such that for any
$\ysf\in \Scal$, there exists $\wsf \in \Gcal$ such that $ \| \wsf -
\ysf \| \leq \gamma$.
\end{definition}
	
\blue{The following lemma was presented in
  \cite[Section~2.1]{woodruff2014sketching}.}

\begin{lemma}\label{lem:angle_norm}
  Let $\Scal$ and $\Gcal$ be as in Definition~\ref{def:SN}, for some
  $\gamma \in (0,1)$. Then preservation of angle on $\Gcal$ leads to
  the preservation of norm in $\Scal$. That is, if 
  \begin{equation} \label{eq:ex9}
    \left| \langle \Ssf \wsf,\Ssf \wsf' \rangle - \langle  \wsf ,\wsf' \rangle \right| \leq \vep \,, \quad \mbox{for all $\wsf$, $\wsf \in \Gcal$,}
  \end{equation}
  then
  \begin{equation*}
    \left| \| \Ssf \ysf \|^2 - \| \ysf \|^2 \right| \leq  \frac{\vep}{(1-\gamma)^2} \,, \quad \mbox{for all $\ysf \in \Scal$.}
  \end{equation*}
\end{lemma}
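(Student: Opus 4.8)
The plan is to combine bilinearity with a telescoping decomposition of each sphere vector into net elements carrying geometrically decaying coefficients. Introduce the symmetric bilinear form
\[
B(\usf,\vsf) \defeq \langle \Ssf \usf, \Ssf \vsf\rangle - \langle \usf,\vsf\rangle \,, \quad \usf,\vsf \in \Range{\Fsf\otimes\Gsf}\,.
\]
The hypothesis \eqref{eq:ex9} says precisely that $\left|B(\wsf,\wsf')\right| \le \vep$ for every pair $\wsf,\wsf'\in\Gcal$, and the quantity to be controlled is $B(\ysf,\ysf) = \|\Ssf\ysf\|^2 - \|\ysf\|^2$ for $\ysf\in\Scal$. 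The key point is that $\eqref{eq:ex9}$ applies to \emph{all} pairs of net elements, not just to a single vector, which is what makes the expansion below collapse cleanly.

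First I would build, for an arbitrary $\ysf\in\Scal$, an expansion $\ysf = \sum_{k\ge 0} c_k \wsf^{(k)}$ with $\wsf^{(k)}\in\Gcal$ and $0 \le c_k \le \gamma^k$. Set $\zsf^{(0)} = \ysf$. Given a nonzero residual $\zsf^{(k)}$, write its normalization $\hat{\zsf}^{(k)} \defeq \zsf^{(k)}/\|\zsf^{(k)}\|$, which lies in $\Scal$: indeed $\ysf$ and every net element lie in the subspace $\Range{\Fsf\otimes\Gsf}$, so each residual stays in that subspace by induction, and its normalization is then a unit vector of $\Scal$. Use the $\gamma$-net property of Definition~\ref{def:SN} to pick $\wsf^{(k)}\in\Gcal$ with $\|\hat{\zsf}^{(k)} - \wsf^{(k)}\| \le \gamma$, and set $\zsf^{(k+1)} \defeq \zsf^{(k)} - \|\zsf^{(k)}\|\,\wsf^{(k)}$. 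Then $\|\zsf^{(k+1)}\| = \|\zsf^{(k)}\|\,\|\hat{\zsf}^{(k)} - \wsf^{(k)}\| \le \gamma\|\zsf^{(k)}\|$, whence $c_k \defeq \|\zsf^{(k)}\| \le \gamma^k$, and the telescoping identity $c_k \wsf^{(k)} = \zsf^{(k)} - \zsf^{(k+1)}$ gives $\ysf = \sum_{k\ge 0} c_k \wsf^{(k)}$ because $\|\zsf^{(N+1)}\| \le \gamma^{N+1}\to 0$. (If some residual vanishes the expansion simply terminates, which only helps.)

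With this decomposition in hand, I would expand $B(\ysf,\ysf)$ bilinearly,
\[
\|\Ssf\ysf\|^2 - \|\ysf\|^2 = B(\ysf,\ysf) = \sum_{j,k\ge 0} c_j c_k\, B\bigl(\wsf^{(j)},\wsf^{(k)}\bigr),
\]
the rearrangement being justified by absolute convergence, since $\Ssf$ acts on a finite-dimensional space and the coefficients decay geometrically. Each term satisfies $\left|B(\wsf^{(j)},\wsf^{(k)})\right|\le\vep$ by \eqref{eq:ex9}, so
\[
\left| \|\Ssf\ysf\|^2 - \|\ysf\|^2 \right| \le \vep \sum_{j,k\ge 0} c_j c_k \le \vep \left( \sum_{k\ge 0}\gamma^k \right)^{2} = \frac{\vep}{(1-\gamma)^2},
\]
using $\gamma\in(0,1)$ for the geometric sum. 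Since $\ysf\in\Scal$ was arbitrary, this is the claim.

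The main obstacle is the bookkeeping of the first step: verifying that every residual $\zsf^{(k)}$ remains inside $\Range{\Fsf\otimes\Gsf}$ so that its normalization is a genuine element of $\Scal$ and therefore admits a $\gamma$-net approximant, and confirming the geometric decay $\|\zsf^{(k)}\|\le\gamma^k$ that renders the double series absolutely convergent. Everything after that is a bilinear expansion and a geometric summation. It is worth stressing that the factor $(1-\gamma)^{-2}$ is exactly $\bigl(\sum_k\gamma^k\bigr)^2$: a single-step projection onto the net would leave cross terms between a net vector and an off-net residual, yielding a weaker constant, so the full telescoping expansion is what delivers the stated bound.
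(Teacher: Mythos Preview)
Your proof is correct and follows exactly the standard argument that the paper cites from \cite[Section~2.1]{woodruff2014sketching}: expand an arbitrary $\ysf\in\Scal$ as $\sum_{k\ge 0} c_k\wsf^{(k)}$ with $\wsf^{(k)}\in\Gcal$ and $c_k\le\gamma^k$ via iterated net approximation of normalized residuals, then use bilinearity of $B(\cdot,\cdot)$ together with \eqref{eq:ex9} and the geometric sum $\bigl(\sum_k\gamma^k\bigr)^2=(1-\gamma)^{-2}$. The paper does not supply its own proof of this lemma, so there is nothing further to compare.
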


The size of the $\gamma$-net can also be controlled, as we now show.

\blue{\begin{lemma}\label{lem:cardNcal}
  Let $\Scal$ the the unit sphere of $\Fsf\otimes\Gsf$, defined
  in~\eqref{def:Space}. Then for any $\gamma \in (0,1)$, there exists
  a $\gamma$-net $\Gcal$ of $\Scal$ such that
  \[
  | \Gcal | \leq \left( 1+ \frac{2}{\gamma} \right)^{p^2}\,.
  \]
\end{lemma}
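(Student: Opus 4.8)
The plan is to recognize that $\Scal$ is simply the unit sphere in the linear subspace $W \defeq \Range{\Fsf \otimes \Gsf} \subseteq \Rbb^{n_1 n_2}$, whose dimension is at most $p^2$ (exactly $p^2$ when $\Fsf \otimes \Gsf$ has full column rank, since it has $p^2$ columns). Because $W$ is a finite-dimensional inner product space, it is isometric to $\Rbb^{d}$ with $d = \dim W \le p^2$, so the problem reduces to the classical covering-number bound for a Euclidean unit sphere. I would therefore carry out the standard volumetric packing argument intrinsically inside $W$.

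First I would construct $\Gcal$ as a \emph{maximal} $\gamma$-separated subset of $\Scal$, that is, a maximal collection of points of $\Scal$ whose pairwise distances are all at least $\gamma$. Such a set is finite (the separation cap is what the volume count below quantifies) and is automatically a $\gamma$-net: if some $\ysf \in \Scal$ were at distance exceeding $\gamma$ from every point of $\Gcal$, we could adjoin it to $\Gcal$, contradicting maximality. This observation is the conceptual core of the argument and requires no computation.

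Next I would bound $|\Gcal|$ by a disjoint-balls volume comparison in $W \cong \Rbb^{d}$. Since the centers in $\Gcal$ are pairwise at least $\gamma$ apart, the open balls $B(\wsf, \gamma/2)$, $\wsf \in \Gcal$, are pairwise disjoint; and since each center satisfies $\|\wsf\| = 1$, all these balls lie inside the single ball $B(0, 1 + \gamma/2)$. Comparing $d$-dimensional volumes, which scale as the $d$-th power of the radius, gives $|\Gcal| \, (\gamma/2)^{d} \le (1 + \gamma/2)^{d}$, hence $|\Gcal| \le (1 + 2/\gamma)^{d} \le (1 + 2/\gamma)^{p^2}$, where the final inequality uses $d \le p^2$ together with $1 + 2/\gamma > 1$.

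The argument is essentially routine, so I do not anticipate a serious obstacle; the only points demanding care are (i) confirming that $\Scal$ lives in a space of dimension at most $p^2$, so that the exponent in the volume ratio is controlled, and (ii) checking that a smaller actual dimension $d < p^2$ only sharpens the bound, so full column rank of $\Fsf \otimes \Gsf$ is not actually needed for the stated inequality. I would also emphasize that carrying out the count within $W$ rather than the ambient $\Rbb^{n_1 n_2}$ is precisely what pins the exponent at $p^2$ instead of $n_1 n_2$, which is the dimension-free feature the paper relies on.
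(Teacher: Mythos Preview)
Your proposal is correct and follows essentially the same approach as the paper: the paper observes that $\Scal$ is isometric to the Euclidean unit sphere $\Scal^{p^2-1}$ and then simply cites Corollary~4.2.13 of Vershynin's \emph{High-Dimensional Probability}, whose proof is precisely the maximal $\gamma$-separated set plus volumetric packing argument you spelled out. Your additional care about the case $d = \dim W < p^2$ is not needed under the paper's standing full-rank assumption on $\Fsf$ and $\Gsf$, but it does no harm.
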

\begin{proof}
	Notice that $\Scal$ is isometric to the unit Euclidean sphere $\Scal^{p^2-1}$, the result follows directly by applying Corollary 4.2.13 of ~\cite{vershynin2018high}.
\end{proof}
}

  Finally, we state the proof of Theorem~\ref{thm:maincase2}, which is
  obtained from the lemmas in this section together with
  Proposition~\ref{prop:one_y}.
  
\begin{proof}[Proof of Theorem~\ref{thm:maincase2}]
  Without loss of generality, it suffices to show $\Ssf$ preserves
  norm with high accuracy and high probability over the unit sphere in
  $\Range{\Fsf \otimes \Gsf}$, defined by
  \[
  \Scal \defeq \{ \ysf \in \Rbb^{n_1 n_2}:\  \ysf= (\Fsf \otimes \Gsf)\xsf \text{ for some }\xsf \in \Rbb^{p^2} \text{ and } \| \ysf \| = 1 \}\,.
  \]
  Note from Lemma~\ref{lem:cardNcal} that for given $\gamma \in
  (0,1)$, one can construct a $\gamma$-net $\Gcal$ of $\Scal$ of size
  $f=(1+ \frac{2}{\gamma})^{p^2}$. Given $\vep_1 \in (0,1/2)$, then on
  this $\Gcal$, according to Proposition~\ref{prop:one_y} and
  Lemma~\ref{lem:JLT}, if we assume
  \begin{equation}\label{eqn:rLow1}
    r \geq 8 \cdot 3^{3/2} \cdot  \max\{ \vep_1^{-5/2}\,, p^{3/2} \vep_1^{-1}\}
  \end{equation}
  then with probability at least $1-\delta_2$ with
  \begin{equation}\label{eqn:delta_case2}
    \delta_2 \le  20 r f^2 \exp \left( \frac34 p^{1/2} \right) \exp \left(
    -\frac12 r^{1/3} \vep_1^{1/3} \right) =
     20 r \left( 1 + \frac{2}{\gamma} \right)^{2p^2} \exp \left( \frac34 p^{1/2} \right) \exp \left(
    -\frac12 r^{1/3} \vep_1^{1/3} \right),
  \end{equation}
we have that $\Ssf$ preserves angles, that is,
  \[
  \left| \langle \Ssf \wsf,\Ssf \wsf' \rangle - \langle  \wsf ,\wsf' \rangle \right| \leq \vep_1 \,, \quad \text{for all }\quad \wsf,\wsf' \in \Gcal \,,
  \]
  According to Lemma~\ref{lem:angle_norm}, $\Ssf$ embeds $\Scal$, that
  is,
  \[
  \left| \| \Ssf \ysf \|^2 - \| \ysf \|^2 \right| \leq
  \vep \,, \quad \text{for all } \ysf \in
  \Scal, \quad \mbox{where} \;\; \vep  \defeq \frac{\vep_1}{(1-\gamma)^2}.
  \]

  First, we need to convert the condition \eqref{eqn:rLow1} into one
  involving $\vep$. We obtain
  \begin{equation} \label{eq:rlo5}
  r   \geq 8 \cdot 3^{3/2} \cdot  \max\{ \vep^{-5/2} (1-\gamma)^{-5}, p^{3/2} \vep^{-1}(1-\gamma)^{-2}\}.
  \end{equation}
Second, we must alter the lower bound on $r$ to ensure that the
right-hand side of \eqref{eqn:delta_case2} is smaller than the given
value of $\delta$, that is,
  \begin{equation} \label{eq:us7}
    \delta \ge 20 r \left( 1 + \frac{2}{\gamma} \right)^{2p^2} \exp
    \left( \frac34 p^{1/2} \right) \exp \left( -\frac12 r^{1/3}
    \vep^{1/3} (1-\gamma)^{2/3} \right),
  \end{equation}
  or equivalently,
  \begin{equation} \label{eq:us8}
    \log \delta \ge \log 20 + \log r + 2p^2 \log(1+2/\gamma) +
    \frac34 p^{1/2} - \frac12 r^{1/3} \vep^{1/3} (1-\gamma)^{2/3}.
    \end{equation}
  Note that for $p\ge 6$ and $\gamma \in (0,1)$, we have $\log 20 <
  3<.1 p^2 \log(1+2/\gamma)$ and $.75p^{1/2} < .1 p^2
  \log(1+2/\gamma)$. Thus a sufficient condition for \eqref{eq:us8} is
  \begin{equation} \label{eq:us9}
    \log \delta \ge \log r + 2.2 p^2 \log(1+2/\gamma) - \frac12
    r^{1/3} \vep^{1/3} (1-\gamma)^{2/3}.
  \end{equation}
 Denoting
 \[
 \alpha \defeq \vep^{1/3} (1-\gamma)^{2/3}\quad \text{and } \quad\beta \defeq \frac{1}{3} \left(2.2 p^2 \log(1+2/\gamma) + \left| \log \delta \right| \right) \,,
 \]
 we have $\alpha \in (0,1)$ for any $\vep,\gamma \in (0,1)$. By using
 these definitions, we see that \eqref{eq:us9} is equivalent to
 \begin{equation}\label{eq:us10}
 	\frac{\alpha}{6} r^{1/3} - \log r^{1/3} \geq \beta,
 \end{equation}
 for which the combination of the following two conditions is
 sufficient:
 \begin{subequations}
 \begin{align}
\frac{\alpha}{12} r^{1/3} - \log r^{1/3} &  \geq 0\,, \label{eq:us11_1}\\
 \frac{\alpha}{12} r^{1/3}& \geq \beta\,. \label{eq:us11_2}
 \end{align}
 \end{subequations}
Condition \eqref{eq:us11_2} can be rewritten to
\[
r \geq \frac{12^3\beta^3}{\alpha^3} =\frac{4^3}{\vep (1-\gamma)^2} \left(2.2 p^2 \log(1+2/\gamma) + \left| \log \delta \right| \right)^3 \,,
\]
for which a sufficient condition is
\begin{equation}\label{eqn:second_cond}
r \geq \frac{8.8^3}{\vep (1-\gamma)^2} \log^3(1+2/\gamma)\left(  p^2 + \left| \log \delta \right| \right)^3\,.
\end{equation}
The condition~\eqref{eq:us11_1} requires $h(r^{1/3})\geq 0$, where
$h(x) \defeq \frac{\alpha}{12} x - \log x$. Since
\[
h'(x) = \frac{\alpha}{12}-\frac{1}{x} \geq 0\,,
\]
we see that $h$ is an increasing function for $x> 12/\alpha$. By
noting that
\[
h\left(\frac{12}{\alpha^{5/2}}\right) = \alpha^{-3/2} - \log(12) + \frac{5}{2} \log \alpha \geq 0, \quad \mbox{for $\alpha \in (0,0.33)$}\,,
\]
and
\[
\frac{12}{\alpha^{5/2}}>\frac{12}{\alpha}\,,\quad\alpha\in(0,1)\,,
\]
we have for $\alpha\in(0,0.33)$ that 
\[
h(r^{1/3}) \geq0\,,\quad \mbox{if} \;\; r^{1/3} \geq \frac{12}{\alpha^{5/2}}\,,
\]
which leads to
\begin{equation}\label{eqn:first_cond}
r\geq \frac{12^3}{\vep^{5/2}(1-\gamma)^5}\,.
\end{equation}
We are free to choose $\gamma \in (0,1)$ in a way that ensures that
$\alpha \in (0,.33)$. In fact, by setting $\gamma=3/4$, we have
\[
\alpha = \vep^{1/3}(1/4)^{2/3} < 0.33\,, \quad \mbox{for all
  $\vep \in (0,0.5)$}.
\]

By combining the conditions \eqref{eqn:first_cond}
and~\eqref{eqn:second_cond}, and setting $\gamma=3/4$, we have
\[
r \geq \max\left\{ \frac{\bar C_1}{\vep^{5/2}}\,, \frac{\bar C_2}{\vep } \left(  p^2 + \left| \log \delta \right| \right)^3 \right\}\,,
\]
with $\bar C_2 = 8.8^3 \cdot 4^2 \log^3(11/3) \approx 2.4e4$, and $\bar C_1 =12^3\cdot 4^5$.
 \end{proof}

We could change the weight in the separation of \eqref{eq:us10} into
\eqref{eq:us11_1} and \eqref{eq:us11_2}, one could arrive at different
(possibly better) constants $\bar C_1$ and $\bar{C}_2$ in the final
expression. However, our priority is to show dependence of $r$ on
$\vep$, $\delta$, and $p$ (and {\em not} $n$), and optimization of the
constants is less important.


\section{Numerical Tests}\label{sec:numerical}

\blue{This section presents some numerical evidence of the
  effectiveness of our sketching strategies. We test them on general
  matrices with the tensor structure and a problem directly from
  EIT~\eqref{eqn:inverse_eit}. We are mostly concerned of the
  dependence of accuracy on $n$, $r$, and $p$. The computational
  complexity is rather straightforward and is omitted from
  discussion. In both tests, the numerical solutions outperform the
  theoretical predictions, indicating that there is room for
  improvement in our bounds for $r$.}

\subsection{General matrices with tensor structure}
To set up the experiment, we generate two
matrices $\Fmat= [\fmat_1,\ldots,\fmat_p]\in \Rbb^{n_1\times p}$ and
$\Gmat=[\gmat_1,\ldots,\gmat_p]\in\Rbb^{n_2\times p}$ using:
\[
\Fsf = \Usf_\Fsf \Sigma_\Fsf \Vsf_\Fsf^\top\, \;\; \text{and} \;\; \Gsf = \Usf_\Gsf \Sigma_\Gsf \Vsf_\Gsf\,,  
\]
where $\Usf_\Fsf \in \Rbb^{n_1 \times p}$, $\Usf_\Gsf\in
\Rbb^{n_2\times p}$, $\Vsf_\Fsf \in \Rbb^{p\times p}$, and
$\Vsf_\Gsf\in \Rbb^{p\times p}$ are generated by taking the
QR-decomposition of random matrices with i.i.d Gaussian entries. The
diagonal entries of $\Sigma_\Fsf$ and $\Sigma_\Gsf$ are independently
drawn from $\Ncal(1,0.04)$. Matrix $\Asf\in\Rbb^{n\times p}$ is then
defined by setting $\amat_j=\fmat_j\otimes\gmat_j$, with
$n=n_1n_2$. We further generate the reference solution
$\xmat_\text{ref}\in \Rbb^p$ whose entries are drawn from
$\mathcal{N}(1,0.25)$. The right-hand-side vector $\bmat\in\Rbb^n$
encodes a small amount of noise; we set
\[
\bmat = \Amat \xmat_\text{ref} + 10^{-6}\xi\,.
\]
where each entry of $\xi$ is drawn from $\mathcal{N}(0,1)$. We compute
$\xmat^\ast$ using~\eqref{eqn:LS}.

Three sketching strategies will be considered, the first two cases
from \eqref{eqn:case1} and \eqref{eqn:case2}, and a third standard
strategy that does not take account of the tensor structure in
$\Amat$.
\begin{description}
\item[Case 1] Set $\Smat =\Pmat\otimes\Qmat$ (normalized), as defined
  in~\eqref{eqn:case1} with entries in $\Pmat\in\Rbb^{r_1\times n_1}$
  and $\Qmat\in\Rbb^{r_2\times n_2}$ drawn i.i.d. from
  $\mathcal{N}(0,1)$. Notice here that $r = r_1r_2$.
\item[Case 2] Set $\Smat_{i,:} =\pmat_i^\top\otimes\qmat_i^\top$
  (normalized), as defined in~\eqref{eqn:case2}, with entries in
  vectors $\{\pmat_i\}$ and $\{\qmat_i\}$ drawn i.i.d.  from
  $\mathcal{N}(0,1)$ for all $i=1,\ldots,r$.
\item[Random Gaussian] $\Smat = \Rmat \in\Rbb^{r\times n}$
  (normalized), with entries in $\Rmat$ drawn i.i.d. from
  $\mathcal{N}(0,1)$.
\end{description}

The random Gaussian choice is not practical in this context, but
we include it here as a reference.


For these three choices of $\Smat$, we compute the solution
$\xmat^\ast_s$ of the sketched LS problem \eqref{eqn:LS_ast}, and
compare the sketching solution with the standard least-squares
solution. In particular, we evaluate the following relative error
\begin{equation}\label{eqn:relerrResidue}
\text{Error} = \frac{f(\xmat^\ast_s) - f(\xmat^\ast)}{f(\xmat^\ast)}\,,\quad\text{with}\quad f(\xmat)=\|\Amat\xmat-\bmat\|_2^2\,.
\end{equation}
For each strategy, we draw $10$ independent samples of $\Smat$ and
compute the median relative error. We discuss how this quantity
depends on $r$ and $n$.
	
\subsubsection*{\textbf{Dependence  on $r$}} 
We set $\vep=0.5$, $\delta = 10^{-3}$, $p=10$, and $n_1=n_2=10^2$, and
choose the following values for $r$: $256$, $1024$, $4096$, $16384$
and $65536$. As shown in Figure~\ref{fig:rTest}, the relative error
for all three strategies decreases as $r$ increases; all are of the
order of $r^{-1}$. The result suggests Case-2 sketching and the
Gaussian reference sketching share almost the same accuracy, while
Case 1 is slightly worse.

\begin{figure}[htp]
\centering
\includegraphics[width = 0.6\textwidth]{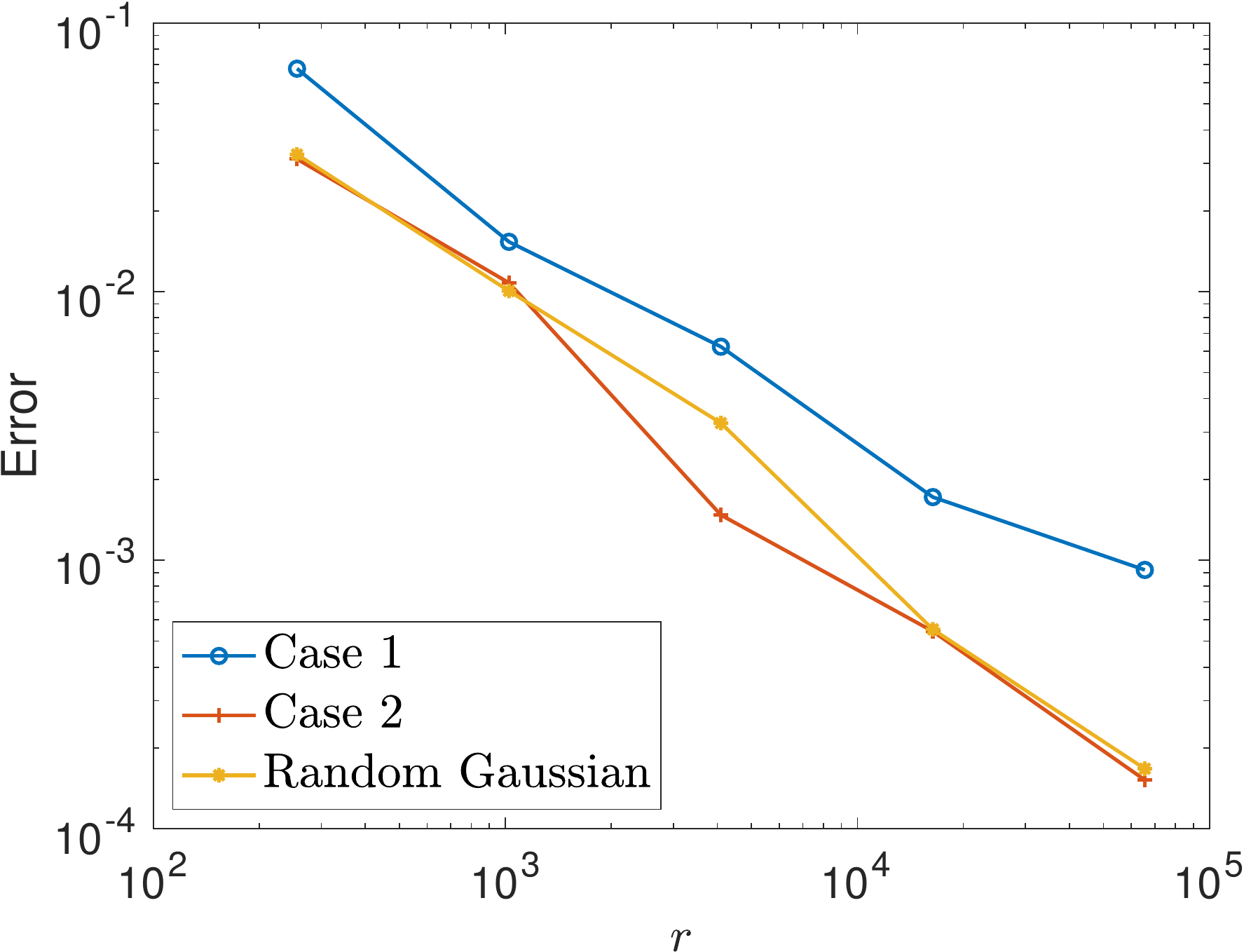}
\caption{Dependence of relative error on $r$ for the three sketching
  strategies.\label{fig:rTest}}
\end{figure}

%
%
%

\subsubsection*{\textbf{Dependence on $n$}}
Theorems~\ref{thm:maincase1} and \ref{thm:maincase2} suggest
essentially no dependence on $n$. To test this claim empirically, we
fix $\vep = 0.5$, $\delta = 10^{-3}$, and $r = 2209$, and set
$n_1=n_2$ to be $50$, $100$, $150$, $200$, $250$. The error, plotted
in Figure~\ref{fig:mTest}, shows no dependence on $n$.

\begin{figure}[htp]
\centering
\includegraphics[width=0.6\textwidth]{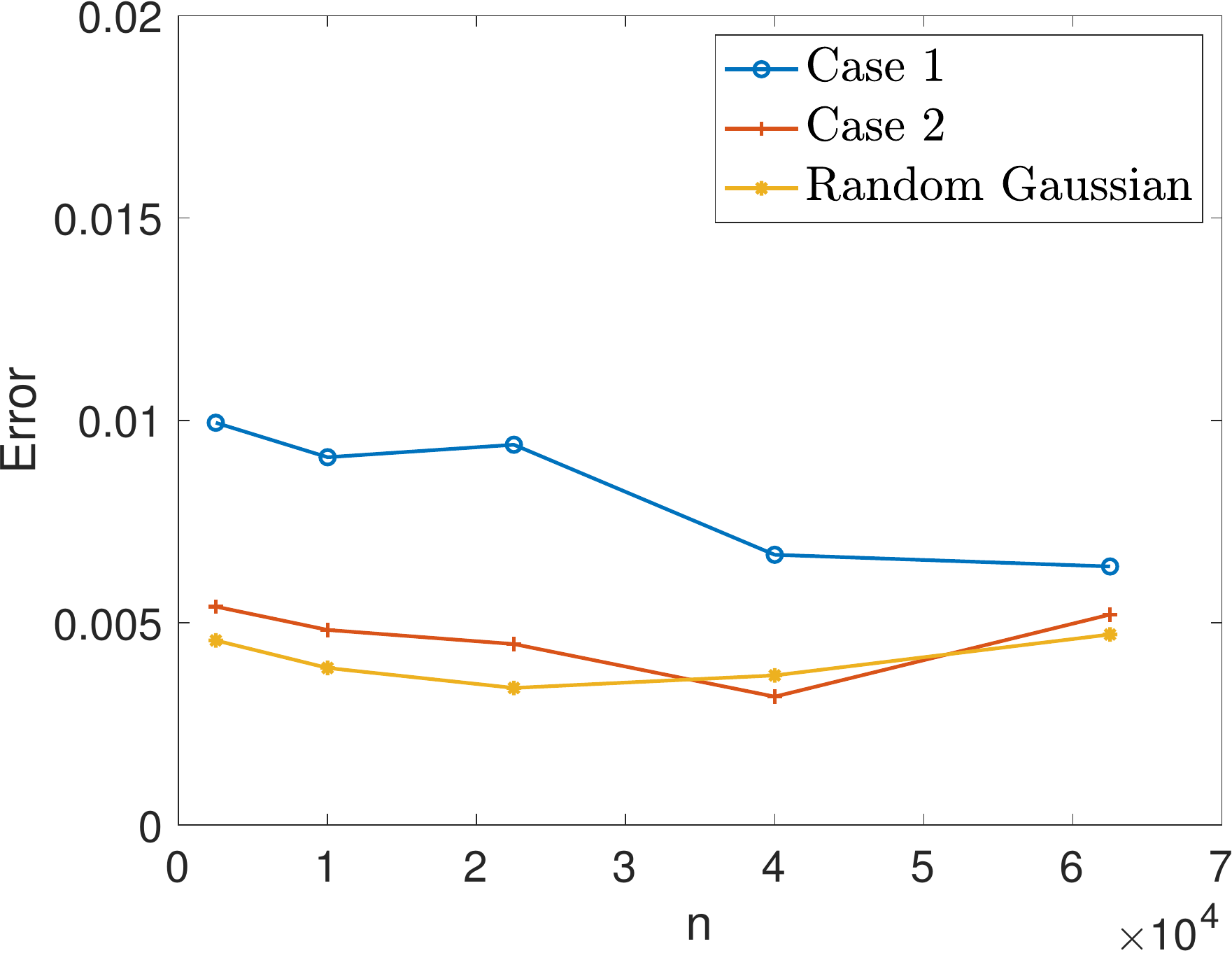}
\caption{Dependence of relative error on ambient dimension $n$ for the
  three sketching strategies.\label{fig:mTest}}
\end{figure}

\subsubsection*{\textbf{Dependence on $p$}}
In this experiment, we study the dependence of relative error on
$p$. We fix $\vep = 0.5$, $\delta = 10^{-3}$, and $r = 4096$ and let
$p$ take the values $3$, $6$, $9$, $12$, $15$. The results are plotted
in Figure~\ref{fig:pTest}. The plot seems to indicate linear
dependence on $p$, better than the higher powers of $p$ predicted by
our bounds.  
We leave the discussion to future research.

\begin{figure}[htp]
	\centering
	\includegraphics[width=0.6\textwidth]{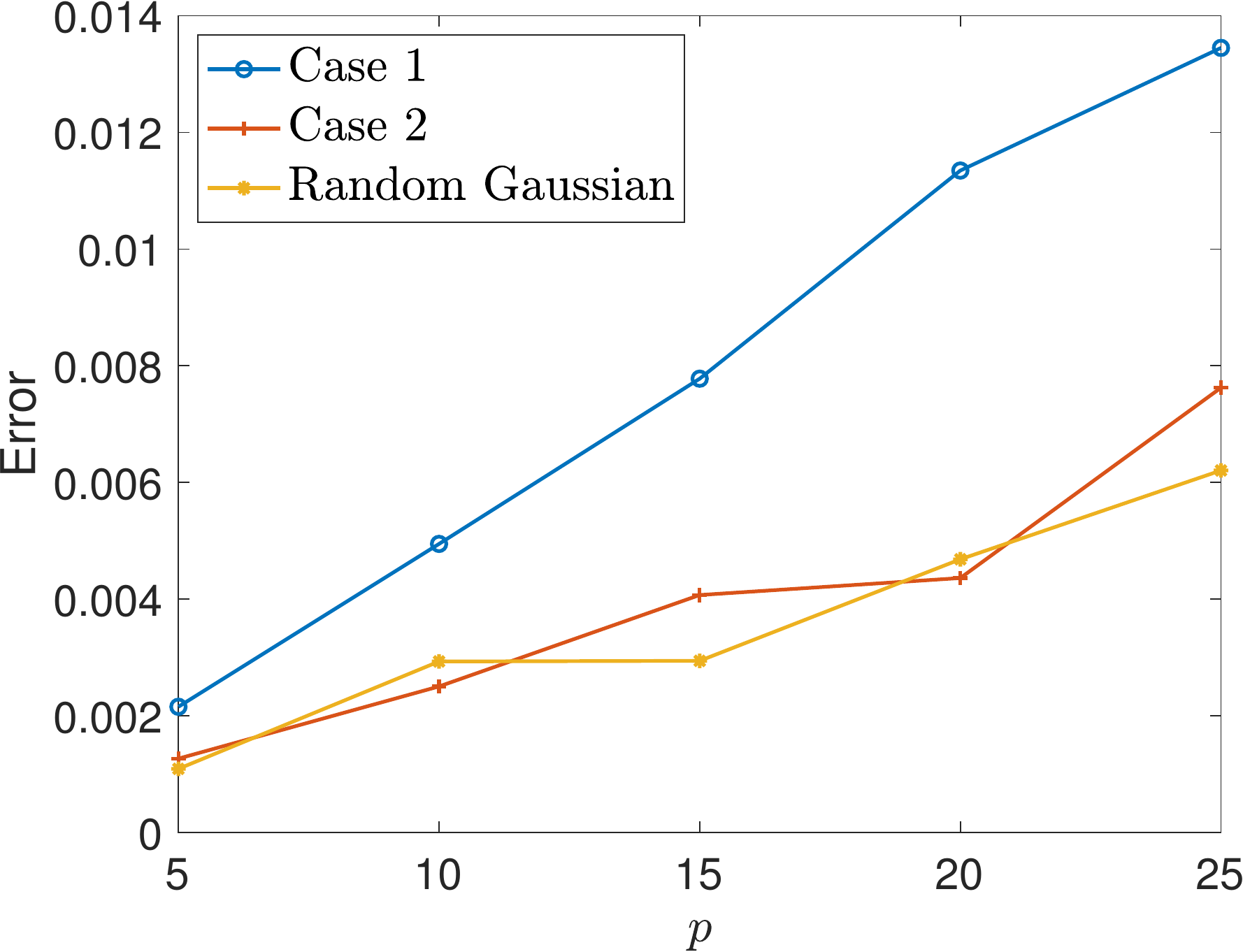}
	\caption{Dependence of relative error on number of unknowns  $p$ for the
		three sketching strategies.\label{fig:pTest}}
\end{figure}

\subsection{Electrical Impedance Tomography}
\blue{In this section, we study the EIT inverse problem on a unit
  square $[0,1]^2$. As presented in Section~\ref{sec:inverse}, the
  goal is to reconstruct the conductivity function $\sigma(x)$ in
  \eqref{eqn:inverse_eit}. We assume the ground truth $\sigma(x)$ is
  an indicator function supported at the two yellow squares at the top
  left and bottom right corners; see Figure \ref{fig:media}.}
\begin{figure}[htp]
	\centering
	\subfloat[Ground Truth\label{fig:mediaGT}]{\includegraphics[width=0.45\textwidth]{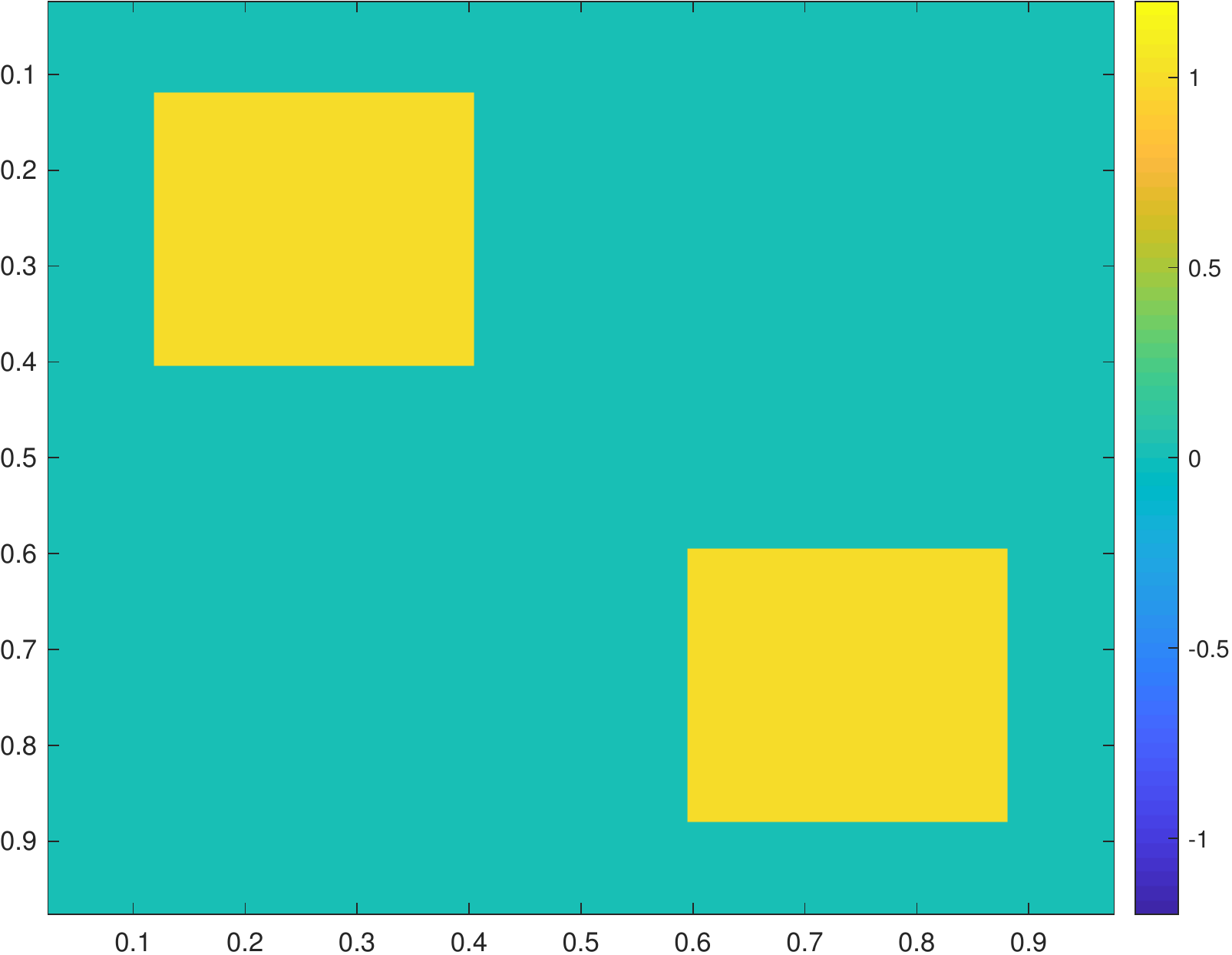}}\hfill
	\subfloat[Case 1\label{fig:media1}]{\includegraphics[width=0.45\textwidth]{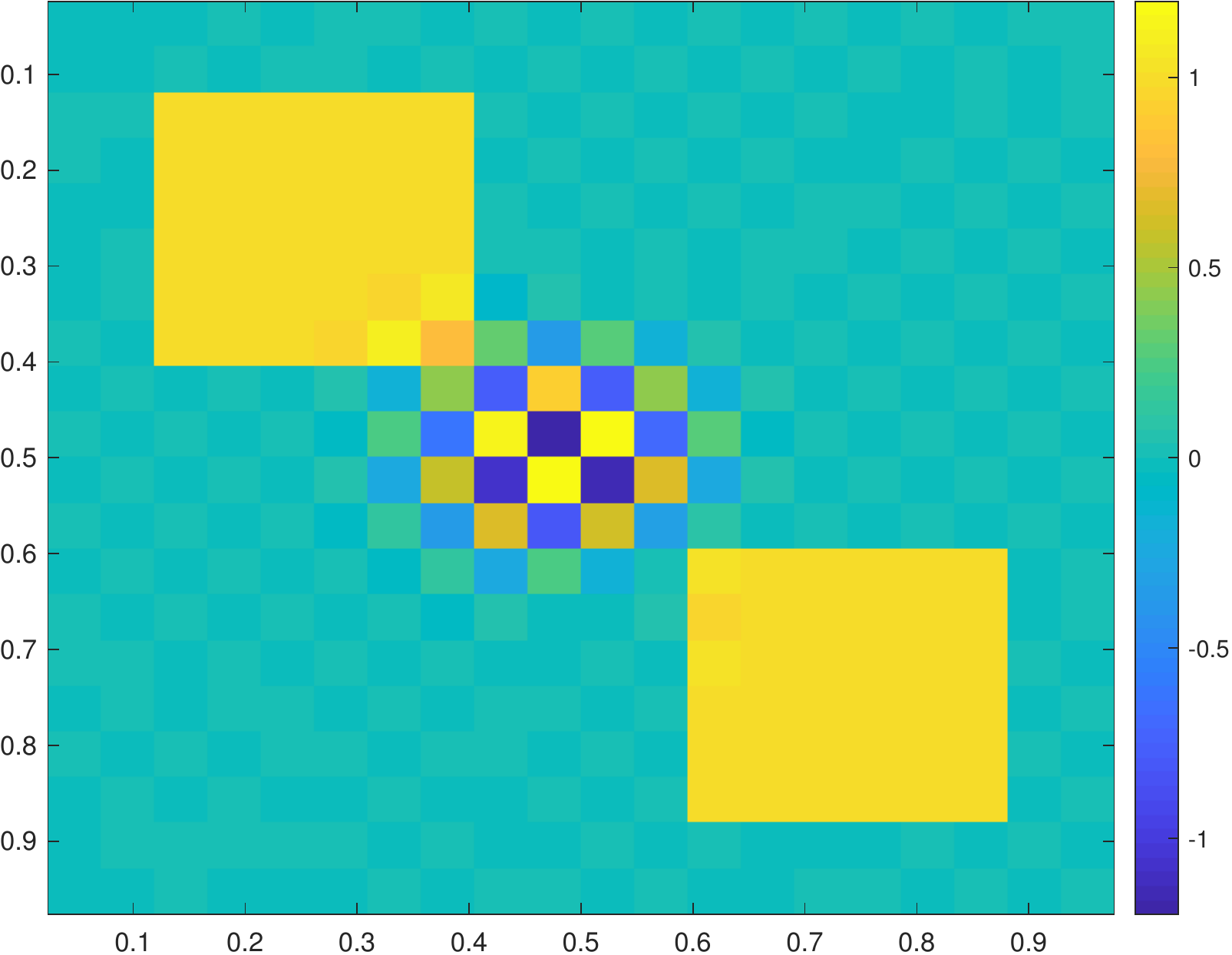}}\hfill
	\subfloat[Case 2\label{fig:media2}]{\includegraphics[width=0.45\textwidth]{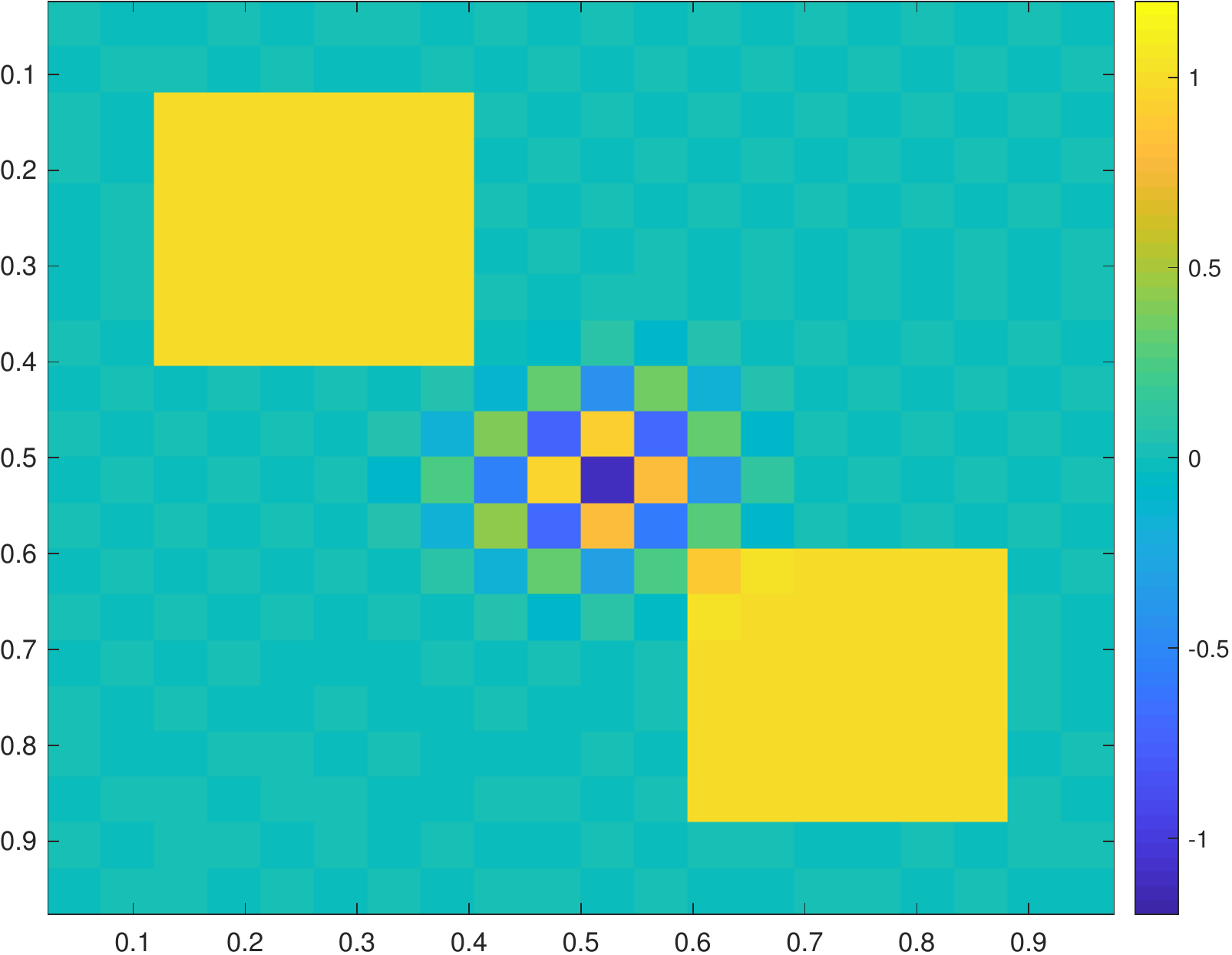}}\hfill
	\subfloat[Gaussian\label{fig:mediaGauss}]{\includegraphics[width=0.45\textwidth]{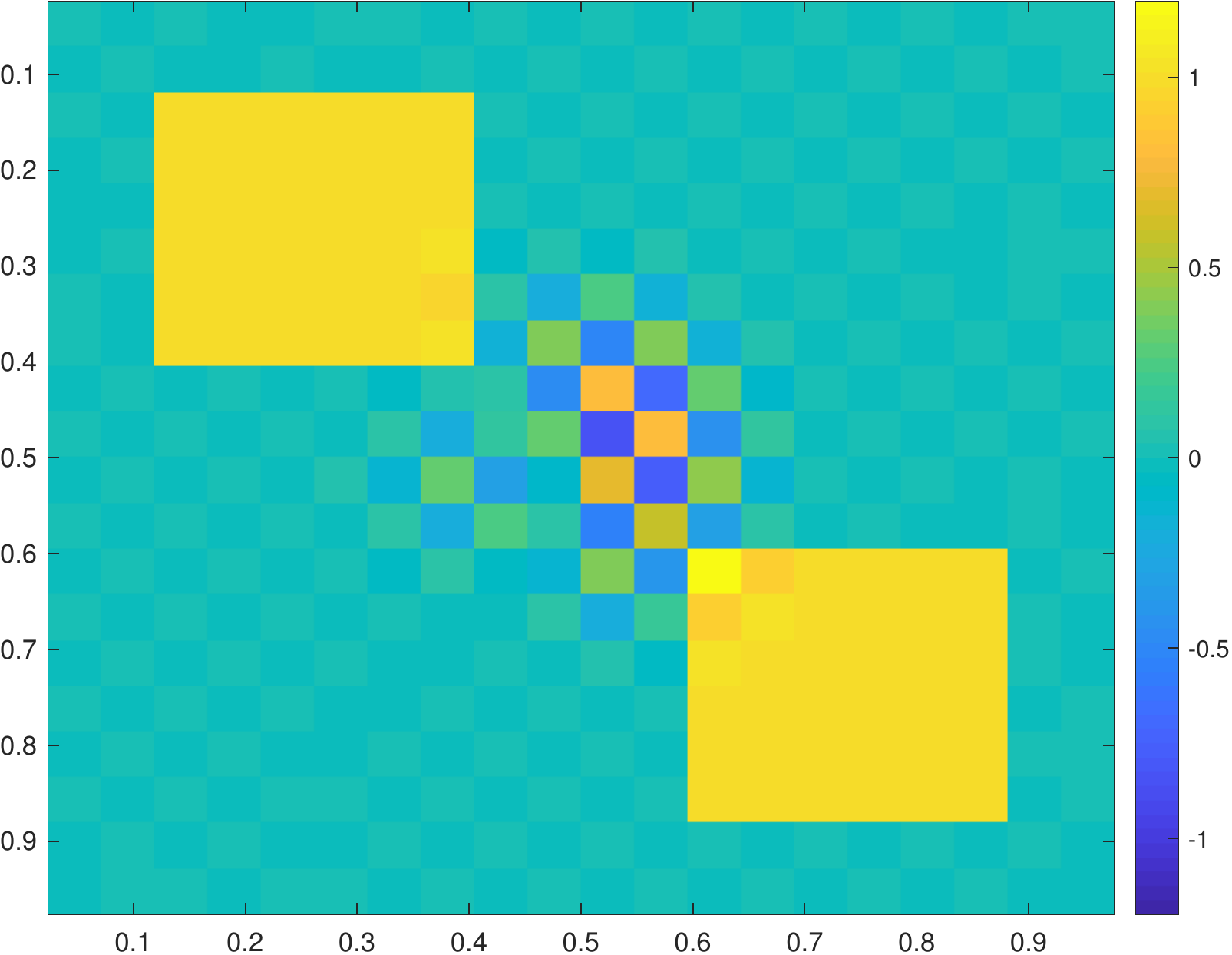}}\hfill
	\caption{The ground truth media and the reconstructed media via all three sketching strategies.}
	\label{fig:media}
\end{figure}
\blue{The background media $\sigma^\ast(x)$
  (cf. \eqref{eqn:background}) is set to be a constant function with
  value $10$. We use finite element method to calculate $\rho_1(x)$
  and $\rho_2(x)$ on a uniform mesh with $\Delta x=1/20$. The
  associated boundary conditions $\phi$ and $\psi$ are constructed as
  Dirac-delta functions at all boundary grid points. Under this setup,
  the matrix $\Asf$ has dimensions $10^4 \times 400$. The right-hand
  side $\bsf$ is generated by multiplying $\Asf$ with the ground truth
  $\sigma(x)$ and adding white noise. The EIT inverse problem is
  highly ill-posed, and thus we set the standard deviation of the mean zero Gaussian noise
  to be small: $10^{-8}$.  All three strategies are tested with
  different number of rows. We record the relative error
  \eqref{eqn:relerrResidue} by taking $10$ independent trials.}

\blue{In Figure~\ref{fig:media}, we plot the ground truth media
  $\sigma(x)$ and the reconstructed media using all three different
  strategies, with $r=74^2=5476$. All of them can roughly reconstruct
  the unknown function with some oscillatory errors in the center of
  the domain. In Figure~\ref{fig:EIT}, we plot the relative error in
  terms of the number of rows $r$ in the sketching matrix $\Ssf$ ($r$
  is set to be $26^2$, $38^2$, $50^2$, $62^2$, and $74^2$). We see
  that the Case-2 strategy performs as well as the unstructured
  Gaussian reference, and they both outperform Case 1.}
\begin{figure}[htp]
	\centering
	\includegraphics[width=0.6\textwidth]{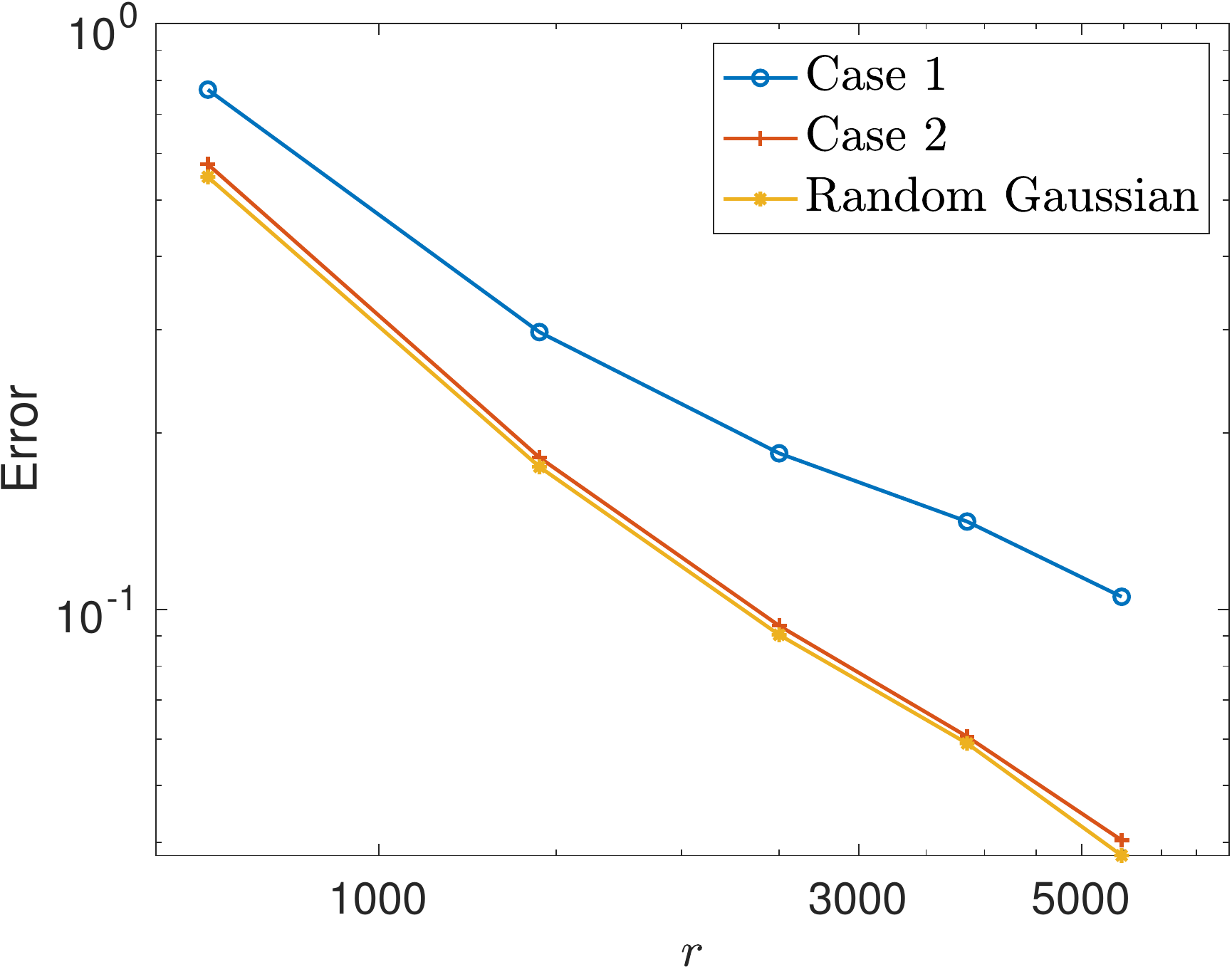}
	\caption{For all three strategies, the relative error
          decreases as the number of rows in $\Ssf$ increases. In
          particular, the Case-2 sketching strategy performs as well
          as the unstructured Gaussian strategy.}
	\label{fig:EIT}
\end{figure}

\section{Concluding remarks}
\blue{Most PDE-based inverse problems, upon linearization, become
  Fredholm integral equations, with the testing functions being the
  product of two functions that are solutions to the forward and the
  adjoint PDEs. A Khatri-Rao matrix structure arises in the
  discretization. We study the sketching problem for matrices of this
  type, where a corresponding structure is enforced in the sketching
  matrix, for efficiency of computation.  We construct the problem
  under the $(\epsilon,\delta)$-$l^2$ embedding framework, and
  investigate the number of rows of the sketching matrix that are
  needed to reconstruct the least-squares solution with $\epsilon$
  accuracy and $\delta$ confidence. The lower bounds differ for the
  two different sketching strategies that we propose, but both are
  independent of the size of the ambient space.}

\section*{Acknowledgments}
Chen, Li, and Newton are supported in part by NSF-DMS-1750488 and
NSF-TRIPODS 1740707. Wright is supported in part by NSF Awards 1628384, 1634597, and 1740707;  Subcontract 8F-30039 from Argonne National Laboratory; and Award N660011824020 from the DARPA Lagrange Program.

\appendix 
  \section{Key Identities and Inequalities}\label{sec:app1}
  
Some identities and inequalities used repeatedly in the text are
collected here.

\subsection{Identities of the Kronecker product}
Let $\mathsf{A}=(a_{ij})\in\Rbb^{r_1\times n_1}$,
$\mathsf{B}=(b_{ij})\in\Rbb^{r_1\times n_2}$. Then the Kronecker
product of $\Asf$ and $\Bsf$ forms a matrix of size $r_1r_2\times
n_1n_2$ defined by:
\[
\Amat\otimes\Bmat = \begin{bmatrix}
a_{11}\Bmat & a_{12}\Bmat & \cdots & a_{1n_1}\Bmat\\
a_{21}\Bmat & \cdots & \cdots & a_{2n_1}\Bmat\\
\ldots & \ddots & \ddots & \ldots\\
a_{r_11}\Bmat & a_{r_12}\Bmat & \cdots & a_{r_1n_1}\Bmat
\end{bmatrix}\,.
\]
The following properties hold.
\begin{enumerate}
\item Let $\mathsf{A}\in\Rbb^{r_1\times n_1}$,
  $\mathsf{B}\in\Rbb^{r_2\times n_2}$, $\mathsf{C}\in\Rbb^{n_1\times
  p_1}$ and $\mathsf{D}\in\Rbb^{n_2\times p_2}$, then we have the
  mixed-product property:
\begin{equation}\label{eqn:kron1}
(\mathsf{A}\otimes \mathsf{B})(\mathsf{C}\otimes \mathsf{D}) = (\mathsf{A}\mathsf{C})\otimes  (\mathsf{B}\mathsf{D})\,.
\end{equation}
\item Let $\mathsf{A}\in\Rbb^{r_1\times n_1}$,
  $\mathsf{B}\in\Rbb^{r_2\times n_2}$, and $\Xsf\in\Rbb^{n_1\times
  n_2}$. Further denote by $\textbf{vec}(\Xsf)$ the vectorization of
  $\Xsf$ formed by stacking the columns of $\Xsf$ into a single column
  vector, then
\begin{equation}\label{eqn:kron2}
(\mathsf{B} \otimes \mathsf{A}) \textbf{vec}(\Xsf) = \textbf{vec}(\mathsf{A} \Xsf \mathsf{B}^\top)\,.
\end{equation} 
Equivalently, given the same $\Asf,\mathsf{B}$ and $\xsf \in \mathbb R^{n_1n_2}$, denote $\textbf{Mat}(\xsf) \in \Rbb^{n_1\times n_2}$ the matricization of the vector $\xsf$ by aligning subvectors of $\xsf$ that are of length $n_1$ into a matrix with $n_2$ columns, then
\begin{equation}\label{eqn:kron3}
	(\mathsf{B} \otimes \mathsf{A})\xsf = \mathbf{vec}\left(\mathsf{A} \textbf{Mat}(\xsf)\mathsf{B}^\top \right)\,.
\end{equation}

\end{enumerate}

\subsection{Sub-exponential random variables and Bernstein inequality}\label{sec:app2}

Properties of sub-exponential random variables used in the proofs are
defined here.

\begin{definition}{\textbf{Sub-Exponential random variable}}
A random variable $X\in \Rbb$ is said to be sub-exponential with
parameters $(\lambda,b)$ (denoted as $X\sim \text{subE}(\lambda,b)$)
if $\Ebb X = 0$ and its moment generating function satisfies
\begin{equation}\label{eqn:sub_exp}
\Ebb e^{sX} \leq \exp\left(\frac{s^2\lambda^2}{2}\right)\,,\quad \mbox{for all} \;\; |s| \leq \frac{1}{b}\,.
\end{equation}
\end{definition}

We have the following.
\begin{proposition}\label{prop:gaussian_square}
Let $Z\sim \mathcal{N}(0,1)$, then $X\defeq Z^2 -1 $ is
sub-exponential with parameters $(2,4)$.
\end{proposition}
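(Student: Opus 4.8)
The plan is to verify the two defining conditions of $\text{subE}(2,4)$ directly from the explicit moment generating function of a (shifted) chi-square variable. First I would dispatch the centering condition: since $\Ebb[Z^2]=1$ for $Z\sim\mathcal N(0,1)$, we immediately get $\Ebb[X]=\Ebb[Z^2]-1=0$, as required by~\eqref{eqn:sub_exp}.

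Next I would compute the MGF of $X$ in closed form. Writing $\Ebb[e^{sX}]=e^{-s}\,\Ebb[e^{sZ^2}]$ and invoking the standard Gaussian integral $\Ebb[e^{sZ^2}]=(1-2s)^{-1/2}$, valid for $s<\tfrac12$, gives
\[
\Ebb[e^{sX}]=\frac{e^{-s}}{\sqrt{1-2s}},\qquad s<\tfrac12.
\]
The admissible range $|s|\le 1/b=1/4$ lies strictly inside $(-\infty,\tfrac12)$, so this formula is available throughout the region of interest. The remaining task is to show that this expression is dominated by $e^{s^2\lambda^2/2}=e^{2s^2}$ for all $|s|\le 1/4$.

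Taking logarithms, the target bound $\frac{e^{-s}}{\sqrt{1-2s}}\le e^{2s^2}$ is equivalent to $-2s-\log(1-2s)\le 4s^2$. Substituting $u=2s$ (so that $|u|\le \tfrac12$), this reduces to the elementary inequality $-u-\log(1-u)\le u^2$. I would prove this by studying $g(u)\defeq u^2+u+\log(1-u)$, noting $g(0)=0$ and computing
\[
g'(u)=2u+1-\frac{1}{1-u}=\frac{u(1-2u)}{1-u}.
\]
On $[0,\tfrac12]$ the factors $u$, $1-2u$, and $1-u$ are all nonnegative, so $g'\ge 0$ and $g$ increases away from $g(0)=0$; on $[-\tfrac12,0]$ the same quotient is nonpositive, so $g$ decreases toward $g(0)=0$. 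In either case $g(u)\ge 0$ on $[-\tfrac12,\tfrac12]$, which is exactly the required inequality.

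There is no serious obstacle here: the argument is entirely elementary once the chi-square MGF is invoked, and I do not expect any delicate estimate. The only point that warrants a little care is bookkeeping around the change of variables $u=2s$ and the constraint matching $|s|\le 1/4 \Leftrightarrow |u|\le \tfrac12$ (which also sits safely inside the convergence domain $s<\tfrac12$), so that the parameters come out exactly as $(\lambda,b)=(2,4)$ rather than merely up to an absolute constant.
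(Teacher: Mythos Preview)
Your argument is correct and complete. The paper itself does not supply a proof of this proposition---it is stated in the appendix as a known fact alongside the Bernstein inequality---so there is no alternative argument to compare against; your direct verification via the closed-form chi-square MGF and the sign analysis of $g'(u)=u(1-2u)/(1-u)$ on $[-\tfrac12,\tfrac12]$ is exactly the standard way to fill in this detail.
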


We conclude with the well known Bernstein inequality.
\begin{proposition}[Bernstein inequality]
Let $X_1,\ldots,X_n$ be i.i.d. mean zero random variables. Suppose that $|X_i|\leq M$ for all $i=1,\ldots,n$, then for any $t>0$,
\begin{equation}\label{eqn:Bernstein}
\Pr\left(\sum_{i=1}^n X_i \geq t \right) \leq \exp\left(-\frac{t^2/2}{\sum_{i=1}^n \Ebb\left[X_i^2\right] + Mt/3 }\right)\,.
\end{equation}
\end{proposition}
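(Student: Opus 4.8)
The plan is to prove this by the classical Chernoff exponentiation-and-optimize technique. First I would fix a free parameter $s > 0$ and apply Markov's inequality to the random variable $e^{s\sum_i X_i}$, using independence to factor the moment generating function:
\[
\Pr\left( \sum_{i=1}^n X_i \geq t \right) \leq e^{-st}\,\Ebb\left[ e^{s\sum_{i=1}^n X_i}\right] = e^{-st}\prod_{i=1}^n \Ebb\left[ e^{sX_i}\right]\,.
\]
The entire argument then reduces to controlling the moment generating function of a single bounded mean-zero variable, after which the product and the optimization over $s$ are routine.

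The key step is the per-variable bound. Expanding $\Ebb[e^{sX_i}] = 1 + \sum_{k\geq 2}\frac{s^k \Ebb[X_i^k]}{k!}$ (the linear term vanishes since $\Ebb X_i = 0$), I would use the boundedness $|X_i| \leq M$ to estimate $|\Ebb[X_i^k]| \leq M^{k-2}\Ebb[X_i^2]$ for every $k \geq 2$. The crucial elementary inequality is $k! \geq 2\cdot 3^{k-2}$, valid for all $k \geq 2$ (with equality at $k=2,3$), which turns the tail into a geometric series:
\[
\sum_{k\geq 2}\frac{(sM)^k}{k!} \leq \frac{(sM)^2}{2}\sum_{j\geq 0}\left(\frac{sM}{3}\right)^j = \frac{(sM)^2}{2}\cdot\frac{1}{1-sM/3}\,, \quad \text{valid for } sM < 3\,.
\]
Combining this with $1+x \leq e^x$ gives the per-variable estimate $\Ebb[e^{sX_i}] \leq \exp\!\big( \tfrac{s^2\,\Ebb[X_i^2]}{2(1-sM/3)}\big)$, and taking the product over $i$ yields $\prod_{i}\Ebb[e^{sX_i}] \leq \exp\!\big( \tfrac{s^2 v}{2(1-sM/3)}\big)$, where $v \defeq \sum_{i=1}^n \Ebb[X_i^2]$.

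Finally I would optimize the bound $\exp\!\big( -st + \tfrac{s^2 v}{2(1-sM/3)}\big)$ over $s$. Rather than differentiate, the convenient closed-form choice is $s = t/(v + Mt/3)$: it automatically satisfies $sM < 3$ because $v > 0$, and substituting it gives $1 - sM/3 = v/(v+Mt/3)$, so the positive term collapses to $st/2$ and the exponent becomes $-st/2 = -\tfrac{t^2/2}{v + Mt/3}$, which is exactly the claimed inequality.

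I expect the main obstacle to be pinning down the precise constant $3$ in the denominator $Mt/3$. This rests entirely on the factorial inequality $k! \geq 2\cdot 3^{k-2}$; any looser summation bound would degrade that constant. The subsequent optimization is mechanical once the correct $s$ is identified, and the only thing to verify carefully is that this $s$ stays in the admissible range $sM < 3$ — which is precisely what makes the variance-plus-range denominator appear cleanly.
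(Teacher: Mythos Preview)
Your argument is correct and is the standard Chernoff-method proof of Bernstein's inequality. The paper itself does not prove this proposition; it is quoted in the appendix as a well-known tool, so there is no ``paper's proof'' to compare against. One small cosmetic point: the displayed geometric series you wrote has $(sM)^k$ in place of $s^k M^{k-2}\Ebb[X_i^2]$, but the per-variable bound you then state, $\Ebb[e^{sX_i}] \leq \exp\!\big(\tfrac{s^2\Ebb[X_i^2]}{2(1-sM/3)}\big)$, is the correct one and the rest of the argument goes through exactly as you describe.
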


\bibliographystyle{amsplain}
\bibliography{ref}
	
\end{document}